\theoremstyle{plain}
\newtheorem{thm}{\protect\theoremname}
  \theoremstyle{definition}
  \newtheorem{defn}[thm]{\protect\definitionname}
  \theoremstyle{plain}
  \newtheorem{prop}[thm]{\protect\propositionname}
  \theoremstyle{remark}
  \newtheorem{rem}[thm]{\protect\remarkname}
  \theoremstyle{definition}
  \newtheorem{example}[thm]{\protect\examplename}
  \theoremstyle{plain}
  \newtheorem{lem}[thm]{\protect\lemmaname}
  \theoremstyle{plain}
  \newtheorem{cor}[thm]{\protect\corollaryname}
  \theoremstyle{plain}
  \newtheorem*{thm*}{\protect\theoremname}
\theoremstyle{plain}
\theoremstyle{remark}
\newcommand{\U}{\mathcal{U}}
\newcommand{\N}{\mathbb{N}}
\newcommand{\Z}{\mathbb{Z}}
\newcommand{\Q}{\mathbb{Q}}
\newcommand{\V}{\mathcal{V}}
\newcommand{\W}{\mathcal{W}}
\newcommand{\bigslant}[2]{{\raisebox{.2em}{$#1$}\left/\raisebox{-.2em}{$#2$}\right.}}
\providecommand{\definitionname}{Definition}
  \providecommand{\lemmaname}{Lemma}
  \providecommand{\propositionname}{Proposition}
\providecommand{\theoremname}{Theorem}
\providecommand{\corollaryname}{Corollary}
  \providecommand{\definitionname}{Definition}
  \providecommand{\examplename}{Example}
  \providecommand{\lemmaname}{Lemma}
  \providecommand{\propositionname}{Proposition}
  \providecommand{\remarkname}{Remark}
  \providecommand{\theoremname}{Theorem}
\providecommand{\theoremname}{Theorem}
\providecommand{\corollaryname}{Corollary}
  \providecommand{\definitionname}{Definition}
  \providecommand{\examplename}{Example}
  \providecommand{\lemmaname}{Lemma}
  \providecommand{\propositionname}{Proposition}
  \providecommand{\remarkname}{Remark}
  \providecommand{\theoremname}{Theorem}
\providecommand{\theoremname}{Theorem}
\providecommand{\corollaryname}{Corollary}
  \providecommand{\definitionname}{Definition}
  \providecommand{\examplename}{Example}
  \providecommand{\lemmaname}{Lemma}
  \providecommand{\propositionname}{Proposition}
  \providecommand{\remarkname}{Remark}
  \providecommand{\theoremname}{Theorem}
\providecommand{\theoremname}{Theorem}
\providecommand{\corollaryname}{Corollary}
  \providecommand{\definitionname}{Definition}
  \providecommand{\examplename}{Example}
  \providecommand{\lemmaname}{Lemma}
  \providecommand{\propositionname}{Proposition}
  \providecommand{\remarkname}{Remark}
\providecommand{\theoremname}{Theorem}
\providecommand{\corollaryname}{Corollary}
  \providecommand{\definitionname}{Definition}
  \providecommand{\examplename}{Example}
  \providecommand{\lemmaname}{Lemma}
  \providecommand{\propositionname}{Proposition}
  \providecommand{\remarkname}{Remark}
\providecommand{\theoremname}{Theorem}
\providecommand{\corollaryname}{Corollary}
  \providecommand{\definitionname}{Definition}
  \providecommand{\examplename}{Example}
  \providecommand{\lemmaname}{Lemma}
  \providecommand{\propositionname}{Proposition}
  \providecommand{\remarkname}{Remark}
\providecommand{\theoremname}{Theorem}
\providecommand{\corollaryname}{Corollary}
  \providecommand{\definitionname}{Definition}
  \providecommand{\examplename}{Example}
  \providecommand{\lemmaname}{Lemma}
  \providecommand{\propositionname}{Proposition}
  \providecommand{\remarkname}{Remark}
\providecommand{\theoremname}{Theorem}
\providecommand{\corollaryname}{Corollary}
  \providecommand{\definitionname}{Definition}
  \providecommand{\examplename}{Example}
  \providecommand{\lemmaname}{Lemma}
  \providecommand{\propositionname}{Proposition}
  \providecommand{\remarkname}{Remark}
\providecommand{\theoremname}{Theorem}
\providecommand{\corollaryname}{Corollary}
  \providecommand{\definitionname}{Definition}
  \providecommand{\examplename}{Example}
  \providecommand{\lemmaname}{Lemma}
  \providecommand{\propositionname}{Proposition}
  \providecommand{\remarkname}{Remark}
\providecommand{\theoremname}{Theorem}
\providecommand{\corollaryname}{Corollary}
  \providecommand{\definitionname}{Definition}
  \providecommand{\examplename}{Example}
  \providecommand{\lemmaname}{Lemma}
  \providecommand{\propositionname}{Proposition}
  \providecommand{\remarkname}{Remark}
\providecommand{\theoremname}{Theorem}
  \providecommand{\corollaryname}{Corollary}
  \providecommand{\definitionname}{Definition}
  \providecommand{\examplename}{Example}
  \providecommand{\lemmaname}{Lemma}
  \providecommand{\propositionname}{Proposition}
  \providecommand{\remarkname}{Remark}
  \providecommand{\theoremname}{Theorem}
\providecommand{\theoremname}{Theorem}
\begin{document}

\title{Ultrafilters, monads and combinatorial properties}

\author{Lorenzo Luperi Baglini\thanks{Faculty of Mathematics, University of Vienna, Oskar-Morgenstern-Platz
1, 1090 Vienna, Austria, supported by grant P 30821 of the Austrian
Science Fund FWF.}}
\maketitle
\begin{abstract}
We use nonstandard methods, based on iterated hyperextensions, to
develop applications to Ramsey theory of the theory of monads of ultrafilters.
This is performed by studying in detail arbitrary tensor products
of ultrafilters, as well as by characterizing their combinatorial
properties by means of their monads. This extends to arbitrary sets
methods previously used to study partition regular Diophantine equations
on $\N$. Several applications are described by means of multiple
examples.
\end{abstract}

\section{Introduction}

It is well known that ultrafilters and nonstandard analysis are strictly
related: on the one hand, models of nonstandard analysis are characterized,
up to isomorphisms, as limit ultrapowers (see \cite[Section 6.4]{05});
on the other hand, the correspondence between elements of a nonstandard
extension $\prescript{\ast}{}{X}$ and ultrafilters on $X$ was first
observed (in the more general case of filters) by W.A.J.~Luxemburg
in \cite{22}, who introduced the concept of monad of a filter. This
correspondence was then used by C.~Puritz in \cite{24,25} and by
G. Cherlin and J. Hirschfeld in \cite{06} to produce new results
about the Rudin-Keisler ordering and to characterize several classes
of ultrafilters, including P-points and selective ones. Similar ideas
were also pursued by S.~Ng and H.~Render in \cite{23} and by A.~Blass
in \cite{Blass2}. 

In \cite{17}, we proved a combinatorial characterization of monads
of ultrafilters in $\beta\N$ which made it possible to develop several
applications in the study of the partition regularity of Diophantine
equations\footnote{See Theorem \ref{thm: monads and polynomials} for a definition of
this notion.} by means of some rather simple algebraic manipulations of hypernatural
numbers. The partition regularity of Diophantine equations is a particular
instance of the kind of problems that are studied in Ramsey theory,
where one wants to understand which monochromatic structures can be
found in some piece of arbitrary finite partitions of a given object.

The basic idea behind our nonstandard approach to Ramsey theory is
that every set in a ultrafilter $\U\in\beta\N$ satisfies a prescribed
property $\varphi$ if and only if the monad of $\U$ satisfy an appropriate
nonstandard version of $\varphi$. This idea has been developed in
\cite{09,11,12,18,19,20,21}, and belong to the family of applications
of nonstandard analysis in Ramsey theory, an approach that started
with J. Hirschfeld in \cite{Hirsch} and has subsequently been carried
on by many authors. As R.~Jin pointed out, nonstandard methods in
Ramsey theory are very useful because they can be used to reduce the
complexity of the mathematical objects that one needs in a proof,
therefore offering a much better intuition, which allows to obtain
much simpler (and shorter) proofs. 

In \cite{10}, M.~Di Nasso surveyed the nonstandard characterization
of ultrafilters on $\N$, proving also several equivalent characterization
of the elements of the monads of tensor products of ultrafilters.
This paper can be seen as an extension of such a study, since our
main aim is to characterize monads of ultrafilters and tensor products
of ultrafilters on arbitrary sets, so to extend the nonstandard methods
used for Diophantine equations to more general classes of problems
in Ramsey theory. This requires to better understand arbitrary tensor
products of ultrafilters, which are a basic important tool to develop
such applications (e.g., in \cite{polyextensions} tensor products
of ultrafilters in $S^{n}$, for $S$ semigroup, are used to obtain
polynomial extensions of the Milliken-Taylor theorem). Moreover, it
is helpful to characterize the Ramsey-theoretical properties of monads
in terms of their combinatorial and algebraic structure for general
properties, extending what we already did for Diophantine equations;
such an approach could lead to unexpected applications in other related
fields. It turns out that a good nonstandard framework to perform
this study is given by iterated nonstandard extensions. 

In Section \ref{sec: Iterated-Hyperextensions} we recall the basic
definitions and properties of iterated hyperextensions, providing
the nonstandard framework that is used to develop the rest of the
paper. In Section \ref{sec:Monads} we recall the definition of the
monad of an ultrafilter. We also recall some basic properties of these
monads, presenting some of their peculiar properties in iterated hyperextensions.
In Section \ref{sec: tensor pairs} we consider arbitrary tensor products
of ultrafilters, we provide several equivalent characterizations of
the elements in their monads and we extend the characterizations to
tensor products of arbitrary (finite) length. Finally, in Section
\ref{sec:Combinatorial-properties-of} we present several combinatorial
properties of monads of arbitrary ultrafilters. In all the paper,
several examples are also included to illustrate the use of such a
theory in applications, as well as our main ideas. 

This paper is self-contained: we only assume the reader to know the
basics of ultrafilters and nonstandard analysis, in particular the
notions of superstructure, transfer, ultrafilter, enlarging and saturation
properties. In any case, a comprehensive reference about ultrafilters
and their applications, especially in Ramsey theory, is the monograph
\cite{13}. As for nonstandard analysis, many short but rigorous presentations
can be found in the literature. We suggest \cite{02}, where eight
different approaches to nonstandard methods are presented, as well
as the introductory book \cite{Goldblatt}, which covers all the nonstandard
tools that we need in this paper, except the iterated extensions that
we will discuss in Section \ref{sec: Iterated-Hyperextensions}. 

\section{\label{sec: Iterated-Hyperextensions}Iterated Hyperextensions}

In this paper, we will adopt the so-called ``external'' approach
to nonstandard analysis, based on superstructure models of nonstandard
methods (see also \cite[Section 3]{02}): 
\begin{defn}
A superstructure model of nonstandard methods is a triple $\langle\mathbb{V}(X),\mathbb{V}(Y),\ast\rangle$,
where 
\begin{enumerate}
\item $X$ is an infinite set; 
\item $\mathbb{V}(X),\mathbb{V}(Y)$ are the superstructures on $X,Y$ respectively; 
\item $\ast:\,\mathbb{V}(X)\rightarrow\mathbb{V}(Y)$ is a star map, namely
it satisfies the transfer principle. 
\end{enumerate}
We say that $\langle\mathbb{V}(X),\mathbb{V}(Y),\ast\rangle$ is a
single superstructure model of nonstandard methods when $X=Y$. 
\end{defn}
From now on, we will use only single superstructure models of nonstandard
methods. This is not restrictive: as proven in \cite[Section 3]{02}
(see also \cite{01}), every superstructure model is isomorphic to
a single superstructure one.

The existence of saturated single superstructure models of nonstandard
methods can be proven in different ways: we refer to \cite{03}, where
single superstructure models are constructed by means of the so-called
Alpha Theory, and to the nonstandard set theory \textit{$^{\ast}$\negmedspace{}}
ZFC introduced by M.~Di Nasso in \cite{07}, where the enlarging
map $\ast$ is defined for every set of the universe. Similar ideas
have been studied, in the context of iterated ultrapowers, by K.~Kunen
in \cite{16}, by K.~Hrbá\v{c}ek in \cite{14} and by K.~Hrbá\v{c}ek,
O.~Lessmann, R.~O'Donovan in \cite{15}. A clear presentation of
iterated ultrapowers can also be found in \cite[Section 6.5]{05}.

The main peculiarity of single superstructure models of nonstandard
methods is that they allow to iterate the $\ast$-map. This allows
to simplify certain proofs: for example, in \cite{09} the structure
$\prescript{\ast\ast}{}{\N}$, obtained by iterating twice the star
map applied to $\N$, is used to give a rather short proof of Ramsey
Theorem. 

Iterated hyperextensions have already been studied in previous publications
(e.g~\cite{09,10,17,18,19,20,21}). In this Section, we will recall
only the main definitions and properties that will be used in the
rest of the paper. 
\begin{defn}
We define by induction the family $\langle S_{n}\mid n\in\N\rangle$
of functions $S_{n}:\mathbb{V}(X)\rightarrow\mathbb{V}(X)$ by setting
$S_{0}=id$ and, for every $n\geq0$, $S_{n+1}=\ast\circ S_{n}$. 
\end{defn}
Let $Y$ be a set in $\mathbb{V}(X)$. Notice that $S_{2}\left(\prescript{\ast}{}{Y}\right):=\prescript{\ast\ast}{}{Y}$
is a nonstandard extension of both $Y$ and $\prescript{\ast}{}{Y}$.
Intuitively, this extension resembles the extension from $Y$ to $\prescript{\ast}{}{Y}$.
For example, if $Y=\mathbb{N}$, the fact that $\prescript{\ast}{}{\N}$
is an end extension of $\N$, namely that 
\[
\forall\eta\in\prescript{\ast}{}{\N}\setminus\N,\,\forall n\in\N\,\eta>n,
\]
can be transferred to 
\[
\forall\eta\in\prescript{\ast\ast}{}{\N}\setminus\prescript{\ast}{}{\N},\,\forall n\in\prescript{\ast}{}{\N}\,\eta>n,
\]
which is the formula expressing that $\prescript{\ast\ast}{}{\N}$
is an end extension of $\prescript{\ast}{}{\N}$.

However, not all the basic properties of the extension from $Y$ to
$\prescript{\ast}{}{Y}$ holds also for the extension $\prescript{\ast\ast}{}{Y}$
of $\prescript{\ast}{}{Y}$: for example, the fact that $\prescript{\ast}{}{A}=A$
for every finite subset of $\mathbb{N}$ (as usual, we identify every
number $n\in\mathbb{N}$ with $\prescript{\ast}{}{n}$) does not hold
true for $\prescript{\ast}{}{\N}$. Just observe that if $\alpha\in\prescript{\ast}{}{\N}\setminus\N$
then, by transfer, $\prescript{\ast}{}{\alpha}\in\prescript{\ast\ast}{}{\N}\setminus\prescript{\ast}{}{\N}$,
hence $\prescript{\ast}{}{\{\alpha\}}=\{\prescript{\ast}{}{\alpha}\}\neq\{\alpha\}$.

In any case, we have the following result, which is a trivial consequence
of the composition properties of elementary embeddings: 
\begin{thm}
For every positive natural number $n$, $\langle\mathbb{V}(X),\mathbb{V}(X),S_{n}\rangle$
is a single superstructure model of nonstandard methods. 
\end{thm}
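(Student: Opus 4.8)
The plan is to verify, for the triple $\langle\mathbb{V}(X),\mathbb{V}(X),S_n\rangle$, the three conditions in the definition of a single superstructure model, proceeding by induction on $n$. The first condition---that $X$ is infinite---is part of the standing hypothesis and does not depend on $n$; the second---that both the domain and the codomain are the superstructure on $X$---holds by construction, since each $S_n$ is by definition a map $\mathbb{V}(X)\rightarrow\mathbb{V}(X)$. Hence the entire content of the statement is the third condition, namely that $S_n$ satisfies the transfer principle.

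For this I would argue by induction on $n$. The base case $n=1$ is immediate, since $S_1=\ast\circ S_0=\ast\circ\mathrm{id}=\ast$, which satisfies transfer by the very assumption that $\langle\mathbb{V}(X),\mathbb{V}(X),\ast\rangle$ is a single superstructure model. For the inductive step, I assume that $S_n$ satisfies transfer and consider $S_{n+1}=\ast\circ S_n$. Given any bounded quantifier formula $\varphi(x_1,\ldots,x_k)$ and any parameters $a_1,\ldots,a_k\in\mathbb{V}(X)$, I would chain the two transfer equivalences: first apply the inductive hypothesis to $S_n$ to pass from $\varphi(a_1,\ldots,a_k)$ to $\varphi(S_n a_1,\ldots,S_n a_k)$, and then apply transfer for $\ast$ to the parameters $S_n a_1,\ldots,S_n a_k\in\mathbb{V}(X)$ to pass to $\varphi(\ast S_n a_1,\ldots,\ast S_n a_k)=\varphi(S_{n+1}a_1,\ldots,S_{n+1}a_k)$. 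Composing these two biconditionals yields precisely the transfer equivalence for $S_{n+1}$, completing the induction.

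There is essentially no hard step: as already remarked, the result is a trivial consequence of the fact that a composition of elementary embeddings is again an elementary embedding. The only point requiring a little care is the exact formulation of transfer that one uses: it should be the version stated for bounded quantifier formulas with \emph{arbitrary} parameters drawn from $\mathbb{V}(X)$, rather than a version phrased in terms of the $\ast$-transform of the constants occurring in $\varphi$, so that the two equivalences chain cleanly with the intermediate parameters $S_n a_i$ lying in $\mathbb{V}(X)$. It is here that the single superstructure assumption is essential: because the codomain of $\ast$ coincides with its domain $\mathbb{V}(X)$, the composition $\ast\circ S_n$ is a well-defined self-map of $\mathbb{V}(X)$, which is exactly what allows the inductive argument to be carried out.
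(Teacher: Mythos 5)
Your proof is correct and takes essentially the same route as the paper: the paper gives no explicit proof, stating the theorem as ``a trivial consequence of the composition properties of elementary embeddings,'' and your induction (with the chaining of the two transfer equivalences, enabled by the fact that domain and codomain both equal $\mathbb{V}(X)$) is exactly the unpacking of that composition argument.
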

In certain cases, as we will show in Section \ref{sec: tensor pairs},
it is helpful to consider the following extension of $\N$: 
\begin{defn}
\label{def: omega hyperextension}Let $\langle\mathbb{V}(X),\mathbb{V}(X),\ast\rangle$
be a superstructure model of nonstandard methods. We call $\omega$-hyperextension
of $X$, and denote by $\prescript{\bullet}{}{X}$, the union of all
hyperextensions $S_{n}(X)$: 
\[
\prescript{\bullet}{}{X}=\bigcup_{n\in\mathbb{N}}S_{n}(X).
\]
\end{defn}
Since 
\begin{center}
$\langle S_{n}(X)\mid n<\omega\rangle$ 
\par\end{center}

is an elementary chain of extensions, we have that $\prescript{\bullet}{}{X}$
is a hyperextension of $X$. Moreover, as $\prescript{\bullet}{}{A}\supset S_{n}(A)\supset A$
for every $A\subseteq X$, we have the following trivial result: 
\begin{prop}
\label{prop: enlarging properties}Let $n\in\mathbb{N}$ and let $\kappa$
be a cardinal number. Then the implications $(1)\Rightarrow(2)\Rightarrow(3)$
hold, where 
\begin{enumerate}
\item $\langle\mathbb{V}(X),\mathbb{V}(X),\ast\rangle$ has the $\kappa$-enlarging
property; 
\item $\langle\mathbb{V}(X),\mathbb{V}(X),S_{n}\rangle$ has the $\kappa$-enlarging
property; 
\item $\langle\mathbb{V}(X),\mathbb{V}(X),\bullet\rangle$ has the $\kappa$-enlarging
property. 
\end{enumerate}
\end{prop}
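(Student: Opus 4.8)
The plan is to prove the two implications $(1)\Rightarrow(2)$ and $(2)\Rightarrow(3)$ separately, both by directly unwinding the definition of the $\kappa$-enlarging property. Recall that a model has the $\kappa$-enlarging property if, for every family $\mathcal{F}$ of subsets of some set in $\mathbb{V}(X)$ with $|\mathcal{F}|<\kappa$ and the finite intersection property, the intersection $\bigcap_{A\in\mathcal{F}}{}^{\ast}A$ of the corresponding starred sets is nonempty. So in each implication I would fix such a family $\mathcal{F}$ and produce a point in the relevant intersection.

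For $(1)\Rightarrow(2)$, I would argue by induction on $n$, the case $n=1$ being the hypothesis $(1)$ itself. The key observation is that $S_{n+1}=\ast\circ S_n$, so that $S_{n+1}(A)={}^{\ast}\bigl(S_n(A)\bigr)$. Given a small family $\mathcal{F}=\langle A_i\rangle$ with the finite intersection property, I would first pass through $S_n$: by the inductive hypothesis (the model with star map $S_n$ has the $\kappa$-enlarging property) one could try to realize the family directly, but the cleaner route is to note that applying the single star map $\ast$ to the already-enlarged sets $S_n(A_i)$ and invoking hypothesis $(1)$ on the family $\langle S_n(A_i)\rangle$ — which still has the finite intersection property since $S_n$ preserves finite intersections by transfer — yields a common point. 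The small technical point to check is that $S_n$ preserves (finite) intersections and the finite-intersection property, which follows from transfer applied $n$ times; this is where a little care is needed but no real obstacle arises.

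For $(2)\Rightarrow(3)$, the essential ingredient is the inclusion already noted in the excerpt just before the proposition, namely that $\prescript{\bullet}{}{A}\supseteq S_n(A)$ for every $A\subseteq X$ and every $n$. Given a family $\mathcal{F}$ of size $<\kappa$ with the finite intersection property, hypothesis $(2)$ (for any fixed $n\geq 1$, say $n=1$) already furnishes a point $\xi\in\bigcap_{A\in\mathcal{F}}S_n(A)$, and since $S_n(A)\subseteq\prescript{\bullet}{}{A}$ this same $\xi$ lies in $\bigcap_{A\in\mathcal{F}}\prescript{\bullet}{}{A}$, establishing that $\langle\mathbb{V}(X),\mathbb{V}(X),\bullet\rangle$ has the $\kappa$-enlarging property. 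This step is essentially immediate from the monotonicity inclusion.

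The main subtlety, and the only place I expect to slow down, is purely bookkeeping in $(1)\Rightarrow(2)$: one must be careful that the $\omega$-hyperextension map $\bullet$ and the finite iterates $S_n$ interact correctly with the formation of $\prescript{\bullet}{}{A}$ for families indexed over arbitrary sets in the superstructure, and that the finite-intersection property genuinely transfers under repeated application of $\ast$. Since the excerpt flags this proposition as a \emph{trivial} result, I would keep the write-up short, presenting the induction for the first implication and the one-line inclusion argument for the second, and simply remark that the preservation of the finite-intersection property under $S_n$ is an immediate consequence of transfer.
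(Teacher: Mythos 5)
Your proposal is correct, but your treatment of $(1)\Rightarrow(2)$ takes a genuinely different route from the paper's. The paper offers no separate argument at all: it presents the whole proposition as an immediate consequence of the inclusion chain $\prescript{\ast}{}{A}\subseteq S_{n}(A)\subseteq\prescript{\bullet}{}{A}$ noted just before the statement (valid for $A\subseteq X$ under the standing identification of each element of $X$ with its star, which makes every subset of $X$ coherent), so that a single point of $\bigcap_{A\in\mathcal{F}}\prescript{\ast}{}{A}$, nonempty by $(1)$, witnesses the enlarging property for $S_{n}$ and for $\bullet$ simultaneously. Your $(2)\Rightarrow(3)$ is exactly this monotonicity argument; but for $(1)\Rightarrow(2)$ you instead apply the $\kappa$-enlarging property of $\ast$ to the transformed family $\langle S_{n-1}(A_i)\rangle$, using transfer to check that $S_{n-1}$ preserves finite intersections and nonemptiness. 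This costs slightly more bookkeeping but buys real robustness: the enlarging property quantifies over families of subsets of arbitrary sets $Y\in\mathbb{V}(X)$, and for non-coherent sets the inclusion $\prescript{\ast}{}{A}\subseteq S_{n}(A)$ can genuinely fail (the paper's own example $\prescript{\ast}{}{\{\alpha\}}=\{\prescript{\ast}{}{\alpha}\}\neq\{\alpha\}$ for $\alpha\in\prescript{\ast}{}{\N}\setminus\N$ exhibits the phenomenon), so your argument covers families that the one-line inclusion argument does not literally reach. Two small corrections to your write-up: in $(2)\Rightarrow(3)$ the integer $n$ is fixed by the hypothesis, not chosen by you, so the parenthetical ``say $n=1$'' should be deleted (your argument is in any case uniform in $n$); and your induction in $(1)\Rightarrow(2)$ never actually uses the inductive hypothesis on the enlarging property --- only the transfer facts about $S_{n-1}$ need induction, while the main step invokes hypothesis $(1)$ directly.
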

However, let us notice that the previous result does not hold, in
general, if we substitute enlarging with saturation. In fact, $\prescript{\bullet}{}{X}$
has\textbf{ }cofinality $\aleph_{0}$ (which is in contrast with $\kappa$-saturation
properties for $\kappa>\aleph_{0}$, as the cofinality is always at
least as great as the cardinal saturation), since a countable right
unbounded sequence in $\prescript{\bullet}{}{\N}$ can be constructed
by choosing, for every natural number $n$, an hypernatural number
$\alpha_{n}$ in $S_{n+1}(X)\setminus S_{n}(X)$. 
\begin{rem}
Of course, Definition \ref{def: omega hyperextension} could be easily
generalized by substituting $\omega$ with other ordinal numbers.
For example, the $\omega+1$ extension of $X$ is 
\[
\prescript{\ast}{}{\left(\,^{\bullet}\N\right)}=\prescript{\ast}{}{\left(\bigcup_{n\in\mathbb{N}}S_{n}(X)\right)}=\bigcup_{\eta\in\prescript{\ast}{}{\N}}S_{\eta}(X),
\]
where $S_{\eta}$ is the value in $\eta$ of the extension of the
map $S:\N\rightarrow\wp\left(\prescript{\bullet}{}{\N}\right)$. In
any case, in this paper we will not consider these more general instances
of iterated hyperextensions. 
\end{rem}

\section{\label{sec:Monads}Monads}

In the following, we will use the symbol $\star$ to denote a generic
nonstandard extensions (which could be $\ast,S_{n}\,\text{or}\,\bullet$),
reserving to $\ast$ and $\bullet$ the meanings given in Section
\ref{sec: Iterated-Hyperextensions}. We hope that this will increase
the readability of the paper\footnote{At least, we hope that this will not decrease the readability of the
paper.}.

Monads of filters were first introduced by W.A.J.~Luxembourg in \cite{22}.
In the past few years, monads of ultrafilters on $\N$ have been used
to prove many results in combinatorial number theory, especially in
the context of the partition regularity of equations (see e.g. \cite{09,10,11,12,17,18,19,20,21}).
However, it seems that to extend the range of applications of these
methods, a deeper study of monads in a wider generality is needed.
Our aim in this section is to start such a study. We will adopt the
framework of iterated nonstandard hyperextensions, since they provide
a simpler setting for the study of monads, as we are going to show. 
\begin{defn}
Let $Y$ be a set in $\mathbb{V}(X)$ and let $\langle\mathbb{V}(X),\mathbb{V}(X),\ast\rangle$
be a single superstructure model of nonstandard methods. Let $\U$
be an ultrafilter on $Y$. For every $n\in\N$ we let 
\[
\mu_{n}(\U):=\bigcap_{A\in\U}S_{n}(A)
\]
(with the agreement that $\mu(\U):=\mu_{1}(\N)$) and 
\[
\mu_{\infty}(\U):=\bigcap_{A\in\U}\prescript{\bullet}{}{A}.
\]
Elements of $\mu_{\infty}(\U)$ will be called generators of $\U$.
Finally, when we consider a generic extension $\langle\mathbb{V}(X),\mathbb{V}(X),\star\rangle$
we will write 
\[
\mu_{\star}(\U):=\bigcap_{A\in\U}\prescript{\star}{}{A}.
\]
\end{defn}
In general, monads can be empty if the extensions are not sufficiently
enlarged. However, we have the following result: 
\begin{thm}
\label{thm: bridge map}Let $Y$ be a set in $\mathbb{V}(X)$. Then
for every $\alpha\in\prescript{\star}{}{Y}$ the set 
\[
\mathcal{\mathfrak{U}}_{\alpha}:=\{A\subseteq Y\mid\alpha\in\prescript{\star}{}{Y}\}
\]
is a ultrafilter on $Y$. Moreover, if the extension $\star:Y\rightarrow\prescript{\star}{}{Y}$
has the $|\wp(Y)|^{+}$-enlarging property then $\mu_{\star}(\U)\neq\emptyset$
for every $\U\in\beta Y$. 
\end{thm}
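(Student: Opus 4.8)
The plan is to treat the two assertions in turn: the first is a direct consequence of the fact that the star map respects the finite Boolean operations, and the second is a single application of the enlarging hypothesis to the filter itself.

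First I would verify that $\mathfrak{U}_\alpha=\{A\subseteq Y\mid\alpha\in\prescript{\star}{}{A}\}$ satisfies the four axioms of an ultrafilter on $Y$, reading off each from a transferred identity. Since $\prescript{\star}{}{\emptyset}=\emptyset$ we get $\emptyset\notin\mathfrak{U}_\alpha$, whereas $\alpha\in\prescript{\star}{}{Y}$ gives $Y\in\mathfrak{U}_\alpha$. Upward closure follows from the monotonicity $A\subseteq B\Rightarrow\prescript{\star}{}{A}\subseteq\prescript{\star}{}{B}$; closure under intersection follows from $\prescript{\star}{}{(A\cap B)}=\prescript{\star}{}{A}\cap\prescript{\star}{}{B}$; and the defining dichotomy of an ultrafilter follows from $\prescript{\star}{}{(Y\setminus A)}=\prescript{\star}{}{Y}\setminus\prescript{\star}{}{A}$, so that $\alpha\in\prescript{\star}{}{Y}$ lies in exactly one of $\prescript{\star}{}{A}$ and $\prescript{\star}{}{(Y\setminus A)}$. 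Each identity is an instance of transfer applied to the evidently true statement in $\mathbb{V}(X)$, so this half of the proof is purely mechanical.

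For the second assertion I would observe that any $\U\in\beta Y$ is, in particular, a family of subsets of $Y$ with the finite intersection property, since $\U$ is closed under finite intersection and $\emptyset\notin\U$. Its cardinality satisfies $|\U|\leq|\wp(Y)|<|\wp(Y)|^{+}$, so the $|\wp(Y)|^{+}$-enlarging property applies to the family $\U$ itself and yields an element common to all the sets $\prescript{\star}{}{A}$ with $A\in\U$; that is, $\bigcap_{A\in\U}\prescript{\star}{}{A}=\mu_{\star}(\U)\neq\emptyset$.

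The only point demanding attention is the cardinality count: one must confirm that $|\U|$ is strictly smaller than $|\wp(Y)|^{+}$, which is immediate from $\U\subseteq\wp(Y)$, so that the enlarging hypothesis is invoked at the correct level. Apart from this bookkeeping no real obstacle arises, the first half being pure transfer and the second a direct appeal to enlargement applied to the finitely-intersecting family $\U$.
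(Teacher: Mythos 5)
Your proposal is correct and follows essentially the same route as the paper: the paper dismisses the ultrafilter verification as straightforward (it is exactly the transfer-of-Boolean-operations argument you spell out, noting that the defining condition should read $\alpha\in\prescript{\star}{}{A}$, as you correctly interpreted), and its second half is precisely your observation that $\U$ is a family with the finite intersection property of cardinality at most $|\wp(Y)|$, so the $|\wp(Y)|^{+}$-enlarging property yields $\mu_{\star}(\U)\neq\emptyset$. No gaps; your version merely makes explicit the details the paper leaves implicit.
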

\begin{proof}
That $\mathcal{U_{\alpha}}$ is a ultrafilter is straightforward.
The second claim follows as every ultrafilter $\U$ on $Y$ is a family
with the finite intersection property and cardinality $|\wp(Y)|$,
and the $|\wp(Y)|^{+}$-enlarging property hence entails that $\mu_{\star}(\U)\neq\emptyset$. 
\end{proof}
Monads can be used to identify every ultrafilter with the trace of
a principal one on a higher level: in fact, if $\alpha\in\prescript{\star}{}{Y}$
then $\alpha\in\mu(\U)$ if and only if $\U=tr_{Y}(P(\alpha))$, where
\[
P(\alpha):=\left\{ A\subseteq\prescript{\star}{}{Y}\mid\alpha\in A\right\} 
\]
is the principal ultrafilter generated by $\alpha$ on $\prescript{\star}{}{Y}$
and, for every ultrafilter $\V$ on $\prescript{\star}{}{Y}$, we
set 
\[
tr_{Y}\left(\V\right):=\left\{ A\subseteq Y\mid\prescript{\star}{}{Y}\in\V\right\} .
\]

\begin{defn}
For $\alpha,\beta\in\prescript{\star}{}{Y}$ we will write $\alpha\sim_{Y}\beta$
if the ultrafilters generated on $Y$ by $\alpha$ and $\beta$ coincide. 
\end{defn}
A remark is in order: in previous papers, the equivalence relation
$\sim_{Y}$ was denoted by $\sim_{\U}$ or $\underset{u}{\sim}$ (see
e.g.~\cite{09,10}). However, here we prefer to use the notation
$\sim_{Y}$ as this equivalence relation depends on the set on which
we are constructing the ultrafilters. To better explain what we mean,
let $\alpha\neq\beta\in\prescript{\ast}{}{\N}$ be such that $\alpha\sim_{\N}\beta$.
Then (as we will prove in Proposition \ref{basic properties monads})
$\prescript{\ast}{}{\alpha}\sim_{\N}\prescript{\ast}{}{\beta}$. However,
$\prescript{\ast}{}{\alpha}\nsim_{\prescript{\ast}{}{\N}}\prescript{\ast}{}{\beta}$!
In fact, the ultrafilter generated by $\prescript{\ast}{}{\alpha}$
on $\prescript{\ast}{}{\N}$ is 
\[
\left\{ A\subseteq\prescript{\ast}{}{\N}\mid\prescript{\ast}{}{\alpha}\in\prescript{\ast}{}{A}\right\} =\left\{ A\subseteq\prescript{\ast}{}{\N}\mid\alpha\in A\right\} =P(\alpha),
\]
and analogously the ultrafilter generated by $\prescript{\ast}{}{\beta}$
on $\prescript{\ast}{}{\N}$ is $P(\beta)$, and $P(\alpha)\neq P(\beta)$
since $\alpha\neq\beta$.

When we work in $\omega$-hyperextensions, it is useful to study the
relationships between sets of generators of the same ultrafilter in
different extensions. To do this, let us fix a definition: 
\begin{defn}
Let $\langle\mathbb{V}(X),\mathbb{V}(X),\ast\rangle$ be a single
superstructure model of nonstandard methods and let $Y$ be a set
in $\mathbb{V}(X)$. We say that $Y$ is coherent if $Y\subseteq\prescript{\ast}{}{Y}$.
We say that $Y$ is completely coherent if $A$ is coherent for every
$A\subseteq Y$. 
\end{defn}
Notice that if $Y$ is coherent then $S_{n}(Y)\subseteq S_{m}(Y)$
for every $n\leq m$. 
\begin{example}
$\N$ is completely coherent, as we identify every $n\in\N$ with
$\prescript{\ast}{}{n}$. However, if $\alpha\in\prescript{\ast}{}{\N}\setminus\N$,
then $\{\alpha\}$ is not coherent, since $\prescript{\ast}{}{\{\alpha\}}=\{\prescript{\ast}{}{\alpha}\}$.
Finally, $\N\cup\left\{ \alpha\right\} $ is coherent but not completely
coherent. 
\end{example}
\begin{thm}
Let $\langle\mathbb{V}(X),\mathbb{V}(X),\ast\rangle$ be a single
superstructure model of nonstandard methods and let $Y$ be a set
in $\mathbb{V}(X)$. The following are equivalent: 
\begin{enumerate}
\item $Y$ is completely coherent; 
\item for all $y\in Y$ $y=\prescript{\ast}{}{y}$. 
\end{enumerate}
\end{thm}
\begin{proof}
$(1)\Rightarrow(2)$ Let $y\in Y$. As $Y$ is completely coherent,
we have that $\{y\}\subseteq\prescript{\ast}{}{\{y\}}=\{\prescript{\ast}{}{y}\}$,
hence $y=\prescript{\ast}{}{y}$.

$(2)\Rightarrow(1)$ Let $A\subseteq Y$. Then $A\subseteq\prescript{\ast}{}{A}$
since, for every $a\in A$, we have that $a=\prescript{\ast}{}{a}\in\prescript{\ast}{}{A}$. 
\end{proof}
\begin{prop}
\label{basic properties monads}Let $Y$ be a set in $\mathbb{V}(X)$.
For every ultrafilter $\U$ on $Y$, for every $n\in\N$ the following
properties hold: 
\begin{enumerate}
\item \label{enu: monads 1}if $Y$ is completely coherent then $\mu_{n}(\U)\subseteq\mu_{n+1}(\U)$; 
\item \label{enu: monads 2}$\prescript{\ast}{}{\mu_{n}(\U)}\subseteq\mu_{n+1}(\U);$ 
\item \label{enu: monads 3}$\alpha\in\mu_{\infty}(\U)\Leftrightarrow\prescript{\ast}{}{\alpha}\in\mu_{\infty}(\U)$. 
\end{enumerate}
\end{prop}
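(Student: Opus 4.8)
The plan is to prove each of the three parts directly from the definitions of the monads $\mu_n(\U)$ and $\mu_\infty(\U)$, using the transfer principle and the coherence properties established earlier. Throughout I would work from the defining intersections $\mu_n(\U)=\bigcap_{A\in\U}S_n(A)$ and $\mu_\infty(\U)=\bigcap_{A\in\U}{}^{\bullet}A$, and exploit that for coherent $Y$ one has $S_n(A)\subseteq S_m(A)$ whenever $n\leq m$ (the remark immediately following the definition of coherence).

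For part \eqref{enu: monads 1}, suppose $Y$ is completely coherent and let $\alpha\in\mu_n(\U)$. Then $\alpha\in S_n(A)$ for every $A\in\U$. Since every $A\subseteq Y$ is coherent, the observation that $S_n(A)\subseteq S_{n+1}(A)$ gives $\alpha\in S_{n+1}(A)$ for every $A\in\U$, whence $\alpha\in\mu_{n+1}(\U)$. This is the routine inclusion and should be a one-line argument once the monotonicity of the $S_n$-images under coherence is invoked.

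For part \eqref{enu: monads 2}, I would argue by transfer. Fix $A\in\U$; by definition $\mu_n(\U)\subseteq S_n(A)$, so applying the star map (which is $S_{n+1}=\ast\circ S_n$ on the relevant set) yields ${}^{\ast}\mu_n(\U)\subseteq {}^{\ast}(S_n(A))=S_{n+1}(A)$. Taking the intersection over all $A\in\U$ gives ${}^{\ast}\mu_n(\U)\subseteq\bigcap_{A\in\U}S_{n+1}(A)=\mu_{n+1}(\U)$. The only subtlety is that $\ast$ commutes with arbitrary intersections only as an inclusion, not an equality, but here I merely need $\ast(\bigcap_A S_n(A))\subseteq\bigcap_A \ast(S_n(A))$, which holds because $\ast$ is monotone: from $\mu_n(\U)\subseteq S_n(A)$ for each fixed $A$ we get ${}^{\ast}\mu_n(\U)\subseteq S_{n+1}(A)$ for each $A$ separately, and then intersect.

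For part \eqref{enu: monads 3}, I would prove both implications. The forward direction $\alpha\in\mu_\infty(\U)\Rightarrow {}^{\ast}\alpha\in\mu_\infty(\U)$ follows from part \eqref{enu: monads 2} in the limit: if $\alpha\in{}^{\bullet}A$ for all $A\in\U$, then $\alpha\in S_n(A)$ for some $n$ (depending on $A$), so ${}^{\ast}\alpha\in S_{n+1}(A)\subseteq{}^{\bullet}A$, giving ${}^{\ast}\alpha\in\mu_\infty(\U)$. The converse $\,{}^{\ast}\alpha\in\mu_\infty(\U)\Rightarrow\alpha\in\mu_\infty(\U)$ is where I expect the \emph{main obstacle}: one must recover membership of $\alpha$ from that of ${}^{\ast}\alpha$, and this requires running transfer \emph{downward}. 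The key point is that for any $A\subseteq Y$ the statement ${}^{\ast}\alpha\in{}^{\ast}(S_n(A))$ is, by transfer, equivalent to $\alpha\in S_n(A)$; since ${}^{\bullet}A=\bigcup_m S_m(A)$ and ${}^{\ast}\alpha\in{}^{\bullet}A$ forces ${}^{\ast}\alpha\in S_{m}(A)=\ast(S_{m-1}(A))$ for some $m\geq 1$, downward transfer yields $\alpha\in S_{m-1}(A)\subseteq{}^{\bullet}A$. Doing this uniformly for every $A\in\U$ gives $\alpha\in\mu_\infty(\U)$. The delicate bookkeeping is ensuring the index $m$ landing ${}^{\ast}\alpha$ inside ${}^{\bullet}A$ is at least $1$ so that the preimage under $\ast$ makes sense; I would handle the edge case $m=0$ (i.e.\ ${}^{\ast}\alpha\in A$) separately, noting that then $\alpha$ already lies in $A\subseteq{}^{\bullet}A$ by coherence of the elements of $\U$.
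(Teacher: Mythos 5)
Parts (1) and (2) of your proof are correct and essentially identical to the paper's: the same use of coherence for (1), and the same ``star each inclusion $\mu_{n}(\U)\subseteq S_{n}(A)$ separately, then intersect'' argument for (2), where your explicit remark about $\ast$ and intersections is exactly the right care. The forward implication of (3) is also correct, and is in fact tidier than the paper's own proof, since you argue one $A\in\U$ at a time and never need the global identity $\mu_{\infty}(\U)=\bigcup_{n}\mu_{n}(\U)$ on which the paper's chain of equivalences rests.

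The genuine gap is precisely the $m=0$ case you flagged in the backward implication of (3), and your patch does not close it: ``coherence of the elements of $\U$'' is not among the hypotheses of part (3) (only part (1) assumes complete coherence), and without it the step --- indeed the statement itself --- fails. Concretely, take $\eta\in\prescript{\ast}{}{\N}\setminus\N$, let $Y=\N\cup\{\prescript{\ast}{}{\eta}\}$, and let $\U$ be the principal ultrafilter on $Y$ generated by the point $\prescript{\ast}{}{\eta}$. Then $\mu_{\infty}(\U)=\prescript{\bullet}{}{\{\prescript{\ast}{}{\eta}\}}=\left\{ S_{k}(\eta)\mid k\geq1\right\}$, so $\prescript{\ast}{}{\eta}\in\mu_{\infty}(\U)$; but $\eta\notin\mu_{\infty}(\U)$, because $S_{k}(\eta)\in S_{k+1}(\N)\setminus S_{k}(\N)$ for every $k$ while $\eta\in S_{1}(\N)$, so $\eta\neq S_{k}(\eta)$ for all $k\geq1$. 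Thus $\alpha=\eta$ refutes the right-to-left implication of (3) as stated. (To be fair, the paper's own proof has the same lacuna: the backward direction of its last equivalence, $\prescript{\ast}{}{\alpha}\in\mu_{\infty}(\U)\Rightarrow\exists n\,\prescript{\ast}{}{\alpha}\in\mu_{n+1}(\U)$, fails in this example, since $\prescript{\ast}{}{\eta}$ lies only in $\mu_{0}(\U)$; you at least isolated the obstruction, which the paper passes over silently.) Your argument becomes a complete proof once one of two extra hypotheses is added, and you should state one explicitly: (i) if $Y$ is completely coherent, then in your $m=0$ case $\prescript{\ast}{}{\alpha}\in A\subseteq Y$ gives $\prescript{\ast}{}{\alpha}=\prescript{\ast\ast}{}{\alpha}$, hence $\alpha=\prescript{\ast}{}{\alpha}\in A$ by injectivity of $\ast$ --- this is the rigorous content of your appeal to coherence; or (ii) if $\U$ is non-principal, replace each $A\in\U$ by $A'=A\setminus\{\prescript{\ast}{}{\alpha}\}$, which still lies in $\U$, forces the witnessing index for $A'$ to be at least $1$, and yields $\alpha\in S_{m-1}(A')\subseteq\prescript{\bullet}{}{A}$.
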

\begin{proof}
(\ref{enu: monads 1}) Just notice that, for every $A\subseteq Y$,
since $A$ is coherent we have that $S_{n}(A)\subseteq S_{n+1}(A)$
for every $n\in\mathbb{N}$.

(\ref{enu: monads 2}) For every $A\in\U$ $\mu_{n}(\U)\subseteq S_{n}(A)$.
Hence, by transfer, $\prescript{\ast}{}{\mu_{n}(\U)}\subseteq\prescript{\ast}{}{S_{n}(A)}=S_{n+1}(A)$,
and so $\prescript{\ast}{}{\mu_{n}(\U)}\subseteq\bigcap_{A\in\U}S_{n+1}(A)=\mu_{n+1}(\U).$

(\ref{enu: monads 3}) $\alpha\in\mu_{\infty}(\U)\Leftrightarrow\exists n\in\N\,\alpha\in\mu_{n}(\U)\Leftrightarrow\exists n\in\N\,\prescript{\ast}{}{\alpha}\in\mu_{n+1}(\U)\Leftrightarrow\prescript{\ast}{}{\alpha}\in\mu_{\infty}(\U)$. 
\end{proof}
It is well-known that functions $f:Y_{1}\rightarrow Y_{2}$ can be
lifted to $\overline{f}:\beta Y_{1}\rightarrow\beta Y_{2}$ by setting
for every $\U\in\beta Y_{1}$ 
\[
\overline{f}(\U):=\left\{ A\subseteq Y_{2}\mid f^{-1}(A)\in\U\right\} .
\]
As one might expect, the monad of $\overline{f}(\U)$ can be expressed
in terms of the monad of $\U$, as shown in the following Theorem,
that generalized similar results proven by M.~Di Nasso\footnote{Notice that in \cite{10} these properties are proven in the case
$X=Y=\N$; however, these arguments used in these proofs work also
in the present case. } in \cite[Propositions 11.2.4,11.2.10 and Theorem 11.2.7]{10} in
the context of $\N$: 
\begin{thm}
\label{thm:functions and monads}Let $A,B\in\mathbb{V}(X)$ be sets,
let $f:\,A\rightarrow B$ and let $\U\in\beta A$. Then the following
facts hold: 
\begin{enumerate}
\item If $A=B$ and $\alpha,\prescript{\star}{}{f}(\alpha)\in\mu(\U)$ then
$\alpha=\prescript{\star}{}{f}(\alpha)$; 
\item $\mu(f(\U))=\prescript{\star}{}{f}(\mu(\U))$. 
\end{enumerate}
\end{thm}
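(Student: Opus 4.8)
The plan is to prove both parts by unwinding the definition of the monad as the intersection of starred sets in an ultrafilter, and exploiting the transfer principle together with the defining property of the bridge map $\mathfrak{U}_\alpha$ from Theorem \ref{thm: bridge map}. Throughout, recall that $\alpha \in \mu(\U)$ means precisely that $\alpha \in \prescript{\star}{}{A}$ for every $A \in \U$, which by Theorem \ref{thm: bridge map} is equivalent to saying that $\U = \mathfrak{U}_\alpha$, i.e.\ $\alpha$ is a generator of $\U$.

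For part (1), I would argue by contradiction. Suppose $A = B$ and both $\alpha$ and $\prescript{\star}{}{f}(\alpha)$ lie in $\mu(\U)$, but $\alpha \neq \prescript{\star}{}{f}(\alpha)$. The idea is to separate the two points using a set in $\U$. Since $\alpha$ and $\prescript{\star}{}{f}(\alpha)$ both generate $\U$, for every $A \in \U$ we have both $\alpha \in \prescript{\star}{}{A}$ and $\prescript{\star}{}{f}(\alpha) \in \prescript{\star}{}{A}$. The key step is to consider the set $C := \{ y \in A \mid f(y) \neq y \}$ together with its complement $D := \{ y \in A \mid f(y) = y \}$; exactly one of these belongs to $\U$. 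If $D \in \U$, then $\alpha \in \prescript{\star}{}{D}$, and by transfer of the sentence $\forall y\,(y \in D \rightarrow f(y) = y)$ we get $\prescript{\star}{}{f}(\alpha) = \alpha$, contradiction. If instead $C \in \U$, then $\alpha \in \prescript{\star}{}{C}$, and by transfer $\prescript{\star}{}{f}(\alpha) \neq \alpha$, which is consistent; so I must work harder in this case. The correct route is to note that since $\alpha \neq \prescript{\star}{}{f}(\alpha)$ are two distinct points both generating $\U$, and $f$ restricted to $C$ is fixed-point-free, one can find (using that $\U$ is an ultrafilter, by the standard three-colouring argument of Katetov/Galvin) a set $E \in \U$ on which $f$ has no fixed points and $f(E) \cap E = \emptyset$; then $\alpha \in \prescript{\star}{}{E}$ forces $\prescript{\star}{}{f}(\alpha) \in \prescript{\star}{}{f(E)} \subseteq \prescript{\star}{}{(A \setminus E)}$, so $\prescript{\star}{}{f}(\alpha) \notin \prescript{\star}{}{E}$, contradicting that $\prescript{\star}{}{f}(\alpha)$ generates $\U$ and $E \in \U$.

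For part (2), I would prove the two inclusions separately, again via the generator characterization. For $\prescript{\star}{}{f}(\mu(\U)) \subseteq \mu(\overline{f}(\U))$: take $\alpha \in \mu(\U)$ and let $B_0 \in \overline{f}(\U)$, so by definition $f^{-1}(B_0) \in \U$; then $\alpha \in \prescript{\star}{}{(f^{-1}(B_0))}$, and transferring $\forall y\,(y \in f^{-1}(B_0) \rightarrow f(y) \in B_0)$ gives $\prescript{\star}{}{f}(\alpha) \in \prescript{\star}{}{B_0}$, so $\prescript{\star}{}{f}(\alpha) \in \mu(\overline{f}(\U))$. For the reverse inclusion, take $\beta \in \mu(\overline{f}(\U))$; I want a generator $\alpha$ of $\U$ with $\prescript{\star}{}{f}(\alpha) = \beta$. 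The natural candidate uses the bridge map: $\beta$ generates the ultrafilter $\overline{f}(\U) = \mathfrak{U}_\beta$, and one checks directly that $\mathfrak{U}_\beta = \overline{f}(\mathfrak{U}_\gamma)$ whenever $\gamma$ is a generator satisfying $\prescript{\star}{}{f}(\gamma) = \beta$; the existence of such a $\gamma$ follows from the surjectivity of $\prescript{\star}{}{f}$ onto the relevant monad, itself a consequence of the transfer of the statement that every element of $B$ in the range has a preimage, combined with the nonemptiness of monads guaranteed by the enlarging hypothesis.

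The main obstacle I anticipate is the fixed-point-free case in part (1): the assertion that a self-map generating the same ultrafilter from two distinct points leads to a contradiction is exactly the nonstandard incarnation of the classical fact that no nonprincipal ultrafilter is fixed by a nontrivial permutation-like map, and it relies on partitioning the domain into three pieces on which $f$ moves points ``far enough'' to be separated inside $\U$. Making this partition argument precise, and transferring it correctly to the level of $\prescript{\star}{}{}$, is the delicate point; the remaining inclusions in part (2) are routine transfer computations once the generator reformulation via $\mathfrak{U}_\alpha$ is in hand.
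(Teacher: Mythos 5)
Part (1) of your proposal is correct and is in substance the proof the paper appeals to: the paper gives no argument of its own, only a citation to Di Nasso's treatment of the case $A=B=\N$, and that proof runs exactly through your dichotomy between $D=\{y\mid f(y)=y\}$ and $C=\{y\mid f(y)\neq y\}$, with the three-set lemma producing $E\in\U$ satisfying $f(E)\cap E=\emptyset$ and the resulting contradiction with $\prescript{\star}{}{f}(\alpha)\in\mu(\U)\subseteq\prescript{\star}{}{E}$. Likewise your inclusion $\prescript{\star}{}{f}(\mu(\U))\subseteq\mu(\overline{f}(\U))$ in part (2) is a correct, routine transfer argument.

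The gap is in the reverse inclusion $\mu(\overline{f}(\U))\subseteq\prescript{\star}{}{f}(\mu(\U))$, and it is a genuine one. You need some $\gamma\in\mu(\U)$ with $\prescript{\star}{}{f}(\gamma)=\beta$, and you derive its existence from ``the surjectivity of $\prescript{\star}{}{f}$ onto the relevant monad'' --- but that surjectivity \emph{is} the inclusion you are trying to prove, so the reasoning is circular. Nor do your two stated ingredients deliver it: transfer of ``every element of the image has a preimage'' is a statement about the internal sets $\prescript{\star}{}{C}$ for $C\in\U$, and it only produces, for each single $C$, some $\alpha_C\in\prescript{\star}{}{C}$ with $\prescript{\star}{}{f}(\alpha_C)=\beta$ (the monad is external, so transfer cannot mention it); and mere nonemptiness of $\mu(\U)$ says nothing about whether any of its elements is mapped to $\beta$. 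What is missing is a saturation step applied to the right family: for $C\in\U$ set $I_C:=\prescript{\star}{}{C}\cap\left(\prescript{\star}{}{f}\right)^{-1}(\{\beta\})$. Each $I_C$ is internal but not the star of a standard set (it contains the nonstandard parameter $\beta$), so the enlarging property invoked for nonemptiness of monads is not the right tool here; each $I_C$ is nonempty because $f^{-1}(f(C))\supseteq C$ gives $f(C)\in\overline{f}(\U)$, hence $\beta\in\prescript{\star}{}{(f(C))}=\prescript{\star}{}{f}\left(\prescript{\star}{}{C}\right)$; and the family has the finite intersection property since $I_{C_1}\cap I_{C_2}=I_{C_1\cap C_2}$. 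Saturation for families of internal sets of cardinality $|\U|$ then yields $\alpha\in\bigcap_{C\in\U}I_C$, i.e. $\alpha\in\mu(\U)$ with $\prescript{\star}{}{f}(\alpha)=\beta$. This is precisely the step in the proof the paper cites that your sketch replaces with a circular appeal.
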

\begin{proof}
The proof in the general case is identical to that given for $A=B=\mathbb{N}$
in $\prescript{\ast}{}{\N}$, see \cite[Propositions 11.2.4,11.2.10 and Theorem 11.2.7]{10}. 
\end{proof}

\section{\label{sec: tensor pairs}Arbitrary tensor products and pairs}

\subsection{Tensor products}

A key notion in ultrafilters theory is that of tensor product of ultrafilters: 
\begin{defn}
Let $S_{1},S_{2}$ be sets in $\mathbb{V}(X)$ and let $\U_{1}\in\beta S_{1},\,\U_{2}\in\beta S_{2}$.
The tensor product of $\U_{1}$ and $\U_{2}$ is the unique ultrafilter
on $S_{1}\times S_{2}$ such that for every $A\subseteq S_{1}\times S_{2}$
\[
A\in\U_{1}\otimes\U_{2}\Leftrightarrow\left\{ s_{1}\in S_{1}\mid\left\{ s_{2}\in S_{2}\mid\left(s_{1},s_{2}\right)\in A\right\} \in\U_{2}\right\} \in\U_{1}.
\]

Moreover, we set 
\[
\beta S_{1}\otimes\beta S_{2}=\left\{ \U_{1}\otimes\U_{2}\mid\U_{1}\in\beta S_{1},\U_{2}\in\beta S_{2}\right\} .
\]
\end{defn}
Tensor products are strictly related with the notion of double limits
along ultrafilters and Rudin-Keisler order (see \cite[Section 11.1]{13}
for the case $S_{1}=S_{2}=S$, $S$ discrete space). However, we will
not adopt this topological point of view here. For us, tensor products
are important because of the role they play in many applications,
especially in combinatorial number theory.
\begin{example}
Let $(S,\cdot)$ be a semigroup. Let $f:S^{2}\rightarrow S$ be $f(a,b)=a\cdot b$.
Then $f\left(\U,\V\right)=\overline{f}\left(\U\otimes\V\right)=\U\odot\V$
for every $\U,\V\in\beta S$. 
\end{example}
\begin{example}
Let $\mathcal{F}=\N^{\N}$ and let $H:\N\times\mathcal{F\rightarrow\N}$
be the function $H(n,f)=f(n)$. Then\footnote{These kind of ultrafilters are important when studying combinatorial
properties of $\N$ by means of the so-called $\mathcal{F}$-finite
embeddabilities, see \cite{21}.} 
\[
H\left(\U,\V\right)=\U\otimes_{\mathcal{F}}\V:=\left\{ A\subseteq\N\mid\left\{ n\in\N\mid\left\{ f\in\mathcal{F}\mid f(n)\in A\right\} \in\V\right\} \in\U\right\} .
\]
\end{example}
The first trivial observation about tensor products is the following: 
\begin{lem}
If $S_{1}$ or $S_{2}$ is finite then $\beta\left(S_{1}\otimes S_{2}\right)=\beta S_{1}\otimes\beta S_{2}$. 
\end{lem}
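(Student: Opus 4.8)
The plan is to establish the nontrivial inclusion $\beta(S_{1}\times S_{2})\subseteq\beta S_{1}\otimes\beta S_{2}$, since the reverse inclusion is immediate from the definition: every $\U_{1}\otimes\U_{2}$ is by construction an ultrafilter on $S_{1}\times S_{2}$, so $\beta S_{1}\otimes\beta S_{2}\subseteq\beta(S_{1}\times S_{2})$ always. Assume without loss of generality that $S_{2}$ is finite; the case in which $S_{1}$ is finite is symmetric, modulo the asymmetry of $\otimes$ discussed below. Fix an arbitrary $\W\in\beta(S_{1}\times S_{2})$; the task is to exhibit $\U_{1}\in\beta S_{1}$ and $\U_{2}\in\beta S_{2}$ with $\W=\U_{1}\otimes\U_{2}$.

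First I would run a pigeonhole argument on the slices. Writing $S_{2}=\{t_{1},\dots,t_{k}\}$, the sets $S_{1}\times\{t_{j}\}$ form a finite partition of $S_{1}\times S_{2}$, so since $\W$ is an ultrafilter exactly one block belongs to it, say $S_{1}\times\{t_{0}\}\in\W$. This singles out the ``correct'' second coordinate. I would then define the two factors as follows: let $\U_{2}$ be the principal ultrafilter on $S_{2}$ generated by $t_{0}$, and let $\U_{1}:=\overline{\pi_{1}}(\W)$ be the pushforward of $\W$ along the first projection $\pi_{1}$. Because $S_{1}\times\{t_{0}\}\in\W$, for every $B\subseteq S_{1}$ one has $B\in\U_{1}$ if and only if $B\times\{t_{0}\}\in\W$.

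Finally I would verify $\W=\U_{1}\otimes\U_{2}$ by unwinding the definition of the tensor product. For $A\subseteq S_{1}\times S_{2}$, since $\U_{2}$ is principal at $t_{0}$ the inner condition $\{s_{2}\in S_{2}\mid(s_{1},s_{2})\in A\}\in\U_{2}$ reduces to $(s_{1},t_{0})\in A$, so that $A\in\U_{1}\otimes\U_{2}$ if and only if $\{s_{1}\in S_{1}\mid(s_{1},t_{0})\in A\}\in\U_{1}$. On the other hand, using $S_{1}\times\{t_{0}\}\in\W$, membership $A\in\W$ is equivalent to $A\cap(S_{1}\times\{t_{0}\})\in\W$, i.e.\ to $\bigl(\{s_{1}\in S_{1}\mid(s_{1},t_{0})\in A\}\bigr)\times\{t_{0}\}\in\W$, which by the description of $\U_{1}$ is exactly $\{s_{1}\in S_{1}\mid(s_{1},t_{0})\in A\}\in\U_{1}$. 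The two conditions coincide, hence $\W=\U_{1}\otimes\U_{2}$.

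The only real point of care — and what I expect to be the sole subtlety rather than a genuine obstacle — is the asymmetry of the tensor product: in the defining condition the outer ultrafilter lives on the first coordinate. Consequently, when it is $S_{1}$ that is finite, one must instead pick the distinguished slice $\{s_{0}\}\times S_{2}\in\W$, take $\U_{1}$ principal at $s_{0}$ and $\U_{2}:=\overline{\pi_{2}}(\W)$, and the analogous computation goes through. Everything else is a routine unwinding of definitions, with no difficulty beyond keeping track of which coordinate carries the principal factor.
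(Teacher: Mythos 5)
Your proof is correct and is essentially the same argument as the paper's: pigeonhole on the finitely many slices to find the one in $\W$, put the principal ultrafilter on the finite coordinate and the pushforward of $\W$ along the projection on the other coordinate, then unwind the definition of $\otimes$. The only cosmetic difference is that the paper works out the case where $S_{1}$ (the outer, finite coordinate) is finite and dismisses the other case as similar, whereas you work out the $S_{2}$-finite case and then sketch the $S_{1}$-finite one; your explicit attention to the asymmetry of the tensor product is a welcome extra care, not a deviation.
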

\begin{proof}
Let us prove the case with $S_{1}$ finite, as the other case is similar.
Let $S_{1}=\left\{ a_{1},\dots,a_{n}\right\} $. Let $\U\in\beta\left(S_{1}\otimes S_{2}\right)$.
For $i=1,\dots,n$ let $A_{i}=\left\{ \left(a_{i},s_{2}\right)\mid s_{2}\in S_{2}\right\} $.
As $S_{1}\times S_{2}=\bigcup_{i\leq n}A_{i}$, there exists a unique
$i\leq n$ such that $A_{i}\in\U$. Let $\mathcal{\mathfrak{U}}_{a_{i}}\in\beta S_{1}$
be the principal ultrafilter on $a_{i}$ and let 
\[
\U_{2}=\overline{\pi_{2}}(\U):=\left\{ A\subseteq S_{2}\mid\left\{ \left(a_{i},s\right)\mid s\in A\right\} \in\U\right\} \in\beta S_{2}.
\]
Then, by construction, we have that for every $A\subseteq S_{1}\times S_{2}$
\[
A\in\mathcal{\mathfrak{U}}_{a_{i}}\otimes\U_{2}\Leftrightarrow\left\{ s_{1}\in S_{1}\mid\left\{ s_{2}\in S_{2}\mid\left(s_{1},s_{2}\right)\in A\right\} \in\U_{2}\right\} \in\mathcal{\mathfrak{U}}_{a_{i}}\Leftrightarrow
\]
\[
\left\{ s_{2}\in S_{2}\mid\left(a_{i},s_{2}\right)\in A\right\} \in\U_{2}\Leftrightarrow A\in\U,
\]
hence $\U=\mathcal{\mathfrak{U}}_{a_{i}}\otimes\U_{2}\in\beta S_{1}\otimes\beta S_{2}$. 
\end{proof}
To develop a deeper study of tensor products, our goal in this section
is to give several characterizations of monads of tensor products
of ultrafilters. The first question that we want to answer is: what
is the relationship between $\mu(\U)\times\mu(\V)$ and $\mu(\U\otimes\V)$?
Let us start with a definition: 
\begin{defn}
Let $\U_{1}\in\beta S_{1},\U_{2}\in\beta S_{2}$. We denote by $\mathcal{F}\left(\U_{1},\U_{2}\right)$
the filter on $S_{1}\times S_{2}$ given by 
\[
\mathcal{F}\left(\U_{1}\times\U_{2}\right)=\left\{ B\in S_{1}\times S_{2}\mid\exists A_{1}\in\U_{1},A_{2}\in\U_{2}\,\text{s.t.}\,A_{1}\times A_{2}\subseteq B\right\} .
\]
\end{defn}
In general, $\mathcal{F}\left(\U_{1}\times\U_{2}\right)$ is just
a filter and not an ultrafilter; its relationship with $\mu\left(\U_{1}\right)$,
$\mu\left(\U_{2}\right)$ and $\mu\left(\U_{1}\otimes\U_{2}\right)$
is clarified in the following Proposition. 
\begin{prop}
\label{prop: product monads}Let $\U_{1}\in\beta S_{1}$ and $\U_{2}\in\beta S_{2}$.
Then 
\begin{equation}
\mu(\U_{1})\times\mu(\U_{2})=\bigcup_{\mathcal{W}\in U\left(\U_{1}\times\U_{2}\right)}\mu(\mathcal{W})\supseteq\mu(\U_{1}\otimes\U_{2}),\label{eq: monads}
\end{equation}
where $U\left(\U_{1}\times\U_{2}\right)=\left\{ \mathcal{W}\in\beta\left(S_{1}\times S_{2}\right)\mid\mathcal{W\supseteq\mathcal{F}}\left(\U_{1}\times\U_{2}\right)\right\} $. 
\end{prop}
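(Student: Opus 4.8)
The plan is to split the statement into the set identity $\mu(\U_1)\times\mu(\U_2)=\bigcup_{\mathcal{W}\in U(\U_1\times\U_2)}\mu(\mathcal{W})$, which I would establish by a double inclusion, and the separate inclusion $\mu(\U_1\otimes\U_2)\subseteq\mu(\U_1)\times\mu(\U_2)$. The only transfer fact used throughout is that $\prescript{\ast}{}{(A_1\times A_2)}=\prescript{\ast}{}{A_1}\times\prescript{\ast}{}{A_2}$ for $A_1\subseteq S_1$ and $A_2\subseteq S_2$; consequently every monad $\mu(\mathcal{W})$ with $\mathcal{W}\in\beta(S_1\times S_2)$ sits inside $\prescript{\ast}{}{S_1}\times\prescript{\ast}{}{S_2}$ and may be analysed coordinatewise.

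For the inclusion $\bigcup_{\mathcal{W}}\mu(\mathcal{W})\subseteq\mu(\U_1)\times\mu(\U_2)$ I would fix $\mathcal{W}\supseteq\mathcal{F}(\U_1\times\U_2)$ and a point $(\alpha,\beta)\in\mu(\mathcal{W})$. For every $A_1\in\U_1$ the rectangle $A_1\times S_2$ belongs to $\mathcal{F}(\U_1\times\U_2)$, hence to $\mathcal{W}$, so $(\alpha,\beta)\in\prescript{\ast}{}{(A_1\times S_2)}=\prescript{\ast}{}{A_1}\times\prescript{\ast}{}{S_2}$ forces $\alpha\in\prescript{\ast}{}{A_1}$; as $A_1$ was arbitrary, $\alpha\in\mu(\U_1)$, and symmetrically $\beta\in\mu(\U_2)$.

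For the reverse inclusion, given $(\alpha,\beta)\in\mu(\U_1)\times\mu(\U_2)$ I would manufacture a witnessing ultrafilter through the bridge map of Theorem \ref{thm: bridge map}: set $\mathcal{W}:=\mathfrak{U}_{(\alpha,\beta)}=\{B\subseteq S_1\times S_2\mid (\alpha,\beta)\in\prescript{\ast}{}{B}\}$, which is an ultrafilter whose monad contains $(\alpha,\beta)$ by construction. To verify $\mathcal{W}\in U(\U_1\times\U_2)$, take any $B$ with $A_1\times A_2\subseteq B$ for some $A_i\in\U_i$; since $\alpha\in\prescript{\ast}{}{A_1}$ and $\beta\in\prescript{\ast}{}{A_2}$ we get $(\alpha,\beta)\in\prescript{\ast}{}{A_1}\times\prescript{\ast}{}{A_2}=\prescript{\ast}{}{(A_1\times A_2)}\subseteq\prescript{\ast}{}{B}$, so $B\in\mathcal{W}$. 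Because both directions are argued pointwise, the degenerate case of empty monads needs no separate treatment: if $\mu(\U_1)$ is empty then the first inclusion already forces each $\mu(\mathcal{W})$ in the union to be empty as well.

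Finally, for $\mu(\U_1\otimes\U_2)\subseteq\mu(\U_1)\times\mu(\U_2)$ I would merely check that $\U_1\otimes\U_2$ is itself an element of $U(\U_1\times\U_2)$: each basic rectangle $A_1\times A_2$ with $A_i\in\U_i$ lies in $\U_1\otimes\U_2$, since the section $\{s_2\mid(s_1,s_2)\in A_1\times A_2\}$ equals $A_2\in\U_2$ precisely when $s_1\in A_1$, so the defining condition reduces to $A_1\in\U_1$; hence $\mathcal{F}(\U_1\times\U_2)\subseteq\U_1\otimes\U_2$ and $\mu(\U_1\otimes\U_2)$ appears as one of the terms of the union already identified with $\mu(\U_1)\times\mu(\U_2)$. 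The only genuinely delicate step is the reverse inclusion of the identity, where an arbitrary coordinatewise-generating pair must be realised as the monad point of a single ultrafilter on the product; the bridge map dispatches this cleanly, so I expect no real obstacle to remain.
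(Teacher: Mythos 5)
Your proposal is correct and follows essentially the same route as the paper's proof: the bridge-map ultrafilter $\mathfrak{U}_{(\alpha,\beta)}$ handles the inclusion of $\mu(\U_{1})\times\mu(\U_{2})$ into the union, membership of rectangles in $\mathcal{W}$ handles the reverse inclusion, and the observation that $\U_{1}\otimes\U_{2}$ contains all rectangles $A_{1}\times A_{2}$ (hence lies in $U\left(\U_{1}\times\U_{2}\right)$) yields the final inclusion. The only differences are cosmetic: you use the rectangles $A_{1}\times S_{2}$ and $S_{1}\times A_{2}$ where the paper intersects over all $A\times B$, and you spell out the upward-closure step that the paper leaves implicit.
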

\begin{proof}
First of all, let us notice that $\U_{1}\otimes\U_{2}\in U\left(\U_{1}\times\U_{2}\right)$,
as clearly $A\times B\in\U_{1}\otimes\U_{2}\,\forall A\in\U_{1},B\in\U_{2}$.
Therefore we are left to prove that $\mu(\U_{1})\times\mu(\U_{2})=\bigcup_{\mathcal{W}\in U\left(\U_{1}\times\U_{2}\right)}\mu(\mathcal{W})$.

$\subseteq$: Let $\alpha\in\mu\left(\U_{1}\right),\beta\in\mu\left(\U_{2}\right)$.
Let $\W=\mathcal{\mathfrak{U}}_{(\alpha,\beta)}.$ Then $\W\in U\left(\U_{1}\times\U_{2}\right)$
as, for every $A\in\U_{1},B\in\U_{2}$ $(\alpha,\beta)\in\prescript{\star}{}{A}\times\prescript{\star}{}{B}=\prescript{\star}{}{\left(A\times B\right)}.$

$\supseteq$: Let $\W\in U\left(\U_{1}\times\U_{2}\right)$. Let $(\alpha,\beta)\in\mu(\W)$.
For every $A\in\U_{1},B\in\U_{2}$ $A\times B\in\W$, hence 
\[
(\alpha,\beta)\in\bigcap_{A\in\U_{1},B\in\U_{2}}\prescript{\star}{}{\left(A\times B\right)}=\mu(\U_{1})\times\mu(\U_{2}).\qedhere
\]
\end{proof}
\begin{cor}
For every $\alpha\in\prescript{\star}{}{S_{1}},\beta\in\prescript{\star}{}{S_{2}}$
$\mathfrak{U_{(\alpha,\beta)}\supseteq}\mathcal{F}\left(\mathfrak{U_{\alpha}\times\mathfrak{U}_{\beta}}\right)\Leftrightarrow\alpha\sim_{S_{1}}\gamma,\beta\sim_{S_{2}}\delta$. 
\end{cor}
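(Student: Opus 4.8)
The corollary should be read with a \emph{fixed} pair of generators $\gamma\in\prescript{\star}{}{S_{1}}$, $\delta\in\prescript{\star}{}{S_{2}}$ (equivalently, with fixed ultrafilters $\U_{1}=\mathfrak{U}_{\gamma}$, $\U_{2}=\mathfrak{U}_{\delta}$), the filter on the left being $\mathcal{F}(\mathfrak{U}_{\gamma}\times\mathfrak{U}_{\delta})=\mathcal{F}(\U_{1}\times\U_{2})$. This reading is forced by the observation that, exactly as in the ``$\subseteq$'' part of the proof of Proposition \ref{prop: product monads}, the inclusion $\mathfrak{U}_{(\alpha,\beta)}\supseteq\mathcal{F}(\mathfrak{U}_{\alpha}\times\mathfrak{U}_{\beta})$ holds for \emph{every} $\alpha,\beta$; hence the inclusion carries information only when tested against a fixed target filter. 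The plan is then to prove the two implications by unwinding the definitions of the bridge map $\mathfrak{U}_{(\cdot)}$ (Theorem \ref{thm: bridge map}), of $\mathcal{F}(\U_{1}\times\U_{2})$, and of the monads, using repeatedly the transfer identity $\prescript{\star}{}{(A_{1}\times A_{2})}=\prescript{\star}{}{A_{1}}\times\prescript{\star}{}{A_{2}}$.

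For ``$\Leftarrow$'', I would assume $\alpha\sim_{S_{1}}\gamma$ and $\beta\sim_{S_{2}}\delta$, so that $\mathfrak{U}_{\alpha}=\U_{1}$ and $\mathfrak{U}_{\beta}=\U_{2}$, and then invoke the always-true inclusion above together with these equalities. Concretely, given $B\in\mathcal{F}(\U_{1}\times\U_{2})$ I pick $A_{1}\in\U_{1}$, $A_{2}\in\U_{2}$ with $A_{1}\times A_{2}\subseteq B$; since $\alpha\in\mu(\U_{1})$ and $\beta\in\mu(\U_{2})$ I obtain $(\alpha,\beta)\in\prescript{\star}{}{A_{1}}\times\prescript{\star}{}{A_{2}}=\prescript{\star}{}{(A_{1}\times A_{2})}\subseteq\prescript{\star}{}{B}$, whence $B\in\mathfrak{U}_{(\alpha,\beta)}$.

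For ``$\Rightarrow$'', I would argue directly by a ``rectangle slicing'' trick rather than through the union formula. Assuming $\mathfrak{U}_{(\alpha,\beta)}\supseteq\mathcal{F}(\U_{1}\times\U_{2})$, for any $A\in\U_{1}$ the rectangle $A\times S_{2}$ lies in $\mathcal{F}(\U_{1}\times\U_{2})$ (take $A_{2}=S_{2}$), so $A\times S_{2}\in\mathfrak{U}_{(\alpha,\beta)}$, i.e.\ $(\alpha,\beta)\in\prescript{\star}{}{(A\times S_{2})}=\prescript{\star}{}{A}\times\prescript{\star}{}{S_{2}}$; projecting onto the first coordinate yields $\alpha\in\prescript{\star}{}{A}$. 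As $A\in\U_{1}$ was arbitrary, $\alpha\in\mu(\U_{1})$, i.e.\ $\mathfrak{U}_{\alpha}=\U_{1}=\mathfrak{U}_{\gamma}$, which is $\alpha\sim_{S_{1}}\gamma$; the symmetric argument with $S_{1}\times B$ gives $\beta\sim_{S_{2}}\delta$. Alternatively, ``$\Rightarrow$'' follows in one line from Proposition \ref{prop: product monads}: the hypothesis says $\mathfrak{U}_{(\alpha,\beta)}\in U(\U_{1}\times\U_{2})$, and since $(\alpha,\beta)\in\mu(\mathfrak{U}_{(\alpha,\beta)})$ we get $(\alpha,\beta)\in\bigcup_{\W\in U(\U_{1}\times\U_{2})}\mu(\W)=\mu(\U_{1})\times\mu(\U_{2})$.

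I do not expect any genuine obstacle once the statement is interpreted correctly: both implications reduce to the commutation of $\star$ with finite products and the defining property of $\mathfrak{U}_{(\cdot)}$. The single delicate point is the interpretation itself, namely recognizing that the inclusion is vacuous unless the target filter $\mathcal{F}(\U_{1}\times\U_{2})$ is held fixed — and it is precisely this fixing that pins the generating ultrafilters of $\alpha$ and $\beta$ to those of $\gamma$ and $\delta$.
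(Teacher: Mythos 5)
Your proposal is correct, and your reading of the statement (which as printed is garbled: $\gamma,\delta$ appear unbound and the left-hand filter is written with $\alpha,\beta$, making the inclusion vacuously true) is the intended one — the target filter must be $\mathcal{F}\left(\mathfrak{U}_{\gamma}\times\mathfrak{U}_{\delta}\right)$ for fixed generators $\gamma,\delta$. The paper gives no written proof, leaving the corollary as an immediate consequence of Proposition \ref{prop: product monads}, and that is essentially what you do: your one-line derivation of the forward implication via the union formula, together with the reverse implication (which is the ``$\subseteq$'' computation from that proposition's proof), is exactly the intended argument, with your rectangle-slicing variant being a harmless elementary substitute.
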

In particular, as a consequence of Proposition \ref{prop: product monads}
we have that the map $\otimes:\,\beta S_{1}\times\beta S_{2}\rightarrow\beta\left(S_{1}\times S_{2}\right)$
is injective but not surjective, in general. Moreover, as it is known,
this entails that $\mu\left(\U_{1}\otimes\U_{2}\right)=\mu\left(\U_{1}\right)\times\mu\left(\U_{2}\right)$
if and only if $\mathcal{F}\left(\U_{1}\times\U_{2}\right)=\U_{1}\otimes\U_{2}$
(see also \cite[Chapter 1]{key-1}), 

To characterize when such a situation happens, let us recall the following
definitions: 
\begin{defn}
Let $\U\in\beta S$ and let $\kappa$ be a cardinal number. The norm
of $\U$ is the cardinal 
\[
\left\Vert \U\right\Vert =\min_{A\in\U}\left|A\right|.
\]
Moreover, $\U$ is $\kappa^{+}$-complete if for every family $\left\{ A_{i}\mid i\in I\right\} \subseteq\U$
with cardinality $\left|I\right|<\kappa^{+}$ we have $\bigcap_{i\in I}A_{i}\in\U$. 
\end{defn}
The problem of characterizing ultrafilters $\U_{1},\U_{2}$ such that
$\U_{1}\otimes\U_{2}=\mathcal{F}\left(\U_{1}\times\U_{2}\right)$
was already considered, and solved, by A.~Blass in \cite[Section 4]{key-1}.
We recall (and reprove for completeness) his characterization in the
following Theorem:
\begin{thm}
\label{thm: chara tensor products}Let $\U_{1}\in\beta S_{1},\U_{2}\in\beta S_{2}$.
The following facts are equivalent: 
\begin{enumerate}
\item $\U_{1}\otimes\U_{2}=\mathcal{F}\left(\U_{1}\times\U_{2}\right);$ 
\item $\forall A\in\U_{1}\,\forall\left\{ B_{i}\mid i\in A\right\} \subseteq\U_{2}\,\exists C\in\U_{1}\,\bigcap_{c\in C\cap A}B_{c}\in\U_{2}.$ 
\end{enumerate}
\end{thm}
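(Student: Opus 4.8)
The plan is to reduce everything to the elementary fact that the rectangle-generated filter always sits inside the tensor product. Indeed, for $A_1\in\U_1$ and $A_2\in\U_2$ the section $\{s_2\mid(s_1,s_2)\in A_1\times A_2\}$ equals $A_2\in\U_2$ precisely when $s_1\in A_1$, so the defining set of $\U_1\otimes\U_2$ computed for $A_1\times A_2$ is exactly $A_1\in\U_1$; hence $A_1\times A_2\in\U_1\otimes\U_2$, and since $\U_1\otimes\U_2$ is a filter this gives $\mathcal{F}(\U_1\times\U_2)\subseteq\U_1\otimes\U_2$ unconditionally. Consequently condition (1) is equivalent to the reverse inclusion, i.e.\ to the assertion that every member of $\U_1\otimes\U_2$ contains a rectangle $A_1\times A_2$ with $A_1\in\U_1,\,A_2\in\U_2$. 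The whole proof then consists of passing back and forth between a subset $D\subseteq S_1\times S_2$ and the indexed family of its sections.

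For $(1)\Rightarrow(2)$, I would feed condition (1) a carefully chosen member of the tensor product built from the data. Given $A\in\U_1$ and a family $\{B_i\mid i\in A\}\subseteq\U_2$, set
\[
D=\bigcup_{i\in A}\left(\{i\}\times B_i\right).
\]
The $s_1$-section of $D$ is $B_{s_1}\in\U_2$ for $s_1\in A$ and $\emptyset$ otherwise, so $\{s_1\mid\{s_2\mid(s_1,s_2)\in D\}\in\U_2\}=A\in\U_1$, whence $D\in\U_1\otimes\U_2$. By (1) there is a rectangle $C\times B\subseteq D$ with $C\in\U_1,\,B\in\U_2$. Now for every $c\in C\cap A$ and every $s_2\in B$ we have $(c,s_2)\in D$, which forces $s_2\in B_c$; thus $B\subseteq\bigcap_{c\in C\cap A}B_c$, and since $B\in\U_2$ this yields $\bigcap_{c\in C\cap A}B_c\in\U_2$, which is exactly (2).

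For $(2)\Rightarrow(1)$ I would run the same correspondence in reverse. Take any $D\in\U_1\otimes\U_2$ and put $A:=\{s_1\mid\{s_2\mid(s_1,s_2)\in D\}\in\U_2\}\in\U_1$; for $i\in A$ let $B_i:=\{s_2\mid(i,s_2)\in D\}\in\U_2$. Applying (2) to this data produces $C\in\U_1$ with $B:=\bigcap_{c\in C\cap A}B_c\in\U_2$. Setting $A_1:=C\cap A\in\U_1$ and $A_2:=B\in\U_2$, one checks $A_1\times A_2\subseteq D$: if $s_1\in A_1$ and $s_2\in A_2$, then $s_1\in C\cap A$ gives $B\subseteq B_{s_1}$, so $s_2\in B_{s_1}=\{s_2\mid(s_1,s_2)\in D\}$, i.e.\ $(s_1,s_2)\in D$. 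Hence $D\in\mathcal{F}(\U_1\times\U_2)$, and by the first paragraph this establishes (1).

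The argument uses no nonstandard machinery and presents no analytic difficulty; it is a bookkeeping exercise in the section/rectangle dictionary. The only point requiring a little care — the closest thing to an obstacle — is the index set of the intersection: one must intersect the sections over $C\cap A$ rather than over all of $C$, both so that every $B_c$ appearing is genuinely a prescribed member of $\U_2$ (the $B_i$ are only given for $i\in A$) and so that the index set matches the one demanded in statement (2). Keeping this restriction consistent in both directions is what makes the two implications fit together cleanly.
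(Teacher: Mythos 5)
Your proof is correct and follows essentially the same route as the paper's: the same set $\bigcup_{i\in A}\{i\}\times B_i$ fed into condition (1) for one direction, and the same section/intersection construction producing the rectangle $(C\cap A)\times B\subseteq D$ for the other (the paper justifies the unconditional inclusion $\mathcal{F}(\U_1\times\U_2)\subseteq\U_1\otimes\U_2$ just before the theorem, in Proposition \ref{prop: product monads}, rather than inside the proof). The only differences are cosmetic bookkeeping, such as your intersecting over $C\cap A$ where the paper renames $C:=A\cap D_1$ first.
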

\begin{proof}
$(1)\Rightarrow(2)$ Let $A\in\U_{1}$ and let $\left\{ B_{i}\mid i\in A\right\} \subseteq\U_{2}$.
Let $S\subseteq S_{1}\times S_{2}$ be the set 
\[
S=\bigcup_{i\in A}\left\{ i\right\} \times B_{i}.
\]
As $A\in\U_{1}$, by definition of tensor product we have that $S\in\U_{1}\otimes\U_{2}$.
But then, as $\U_{1}\otimes\U_{2}=\mathcal{F}\left(\U_{1}\times\U_{2}\right),$
there exist $D_{1}\in\U_{1},D_{2}\in\U_{2}$ such that $D_{1}\times D_{2}\subseteq S$.
Hence for every $c\in C:=A\cap D_{1}$ we have that $D_{2}\subseteq B_{c}$,
so in particular 
\[
D_{2}\subseteq\bigcap_{c\in C}B_{c},
\]

hence $\bigcap_{c\in C}B_{c}\in\U_{2}$.

$(2)\Rightarrow(1)$ Let $S\in\U_{1}\otimes\U_{2}$. By definition,
\[
A:=\left\{ s_{1}\in S_{1}\mid\left\{ s_{2}\in S_{2}\mid\left(s_{1},s_{2}\right)\in S\right\} \in\U_{2}\right\} \in\U_{1}.
\]
For every $i\in A$ let $B_{i}=\left\{ s_{2}\in S_{2}\mid\left(i,s_{2}\right)\in S\right\} \in\U_{2}$.
Then $\left\{ B_{i}\mid i\in A\right\} \subseteq\U_{2}$, hence by
hypothesis there exists $C\in\U_{1}$ so that $B:=\bigcap_{c\in C\cap A}B_{c}\in\U_{2}$.
But then by construction $(C\cap A)\times B\subseteq S$, and so $S\in\mathcal{F}\left(\U_{1}\times\U_{2}\right)$. 
\end{proof}
\begin{example}
Let $S_{1}=S_{2}=\N$. Let $\U_{1},\U_{2}\in\beta\N$. Then the following
are equivalent\footnote{This is a well-known fact: see e.g. \cite[Remark 11.5.5]{10}.}: 
\begin{enumerate}
\item $\U_{1}\otimes\U_{2}=\mathcal{F}\left(\U_{1}\times\U_{2}\right)$; 
\item $\exists i\in\left\{ 0,1\right\} $ such that $\U_{i}$ is principal. 
\end{enumerate}
In fact, that $(2)\Rightarrow(1)$ is straightforward. On the other
hand, let us assume $(1)$, and assume that $\U_{1}$ and $\U_{2}$
are not principal. Let $A$ be any set in $\U$ and, for every $a\in A$,
let $B_{a}=\left\{ n\in\N\mid n>a\right\} \in\U_{2}$. By Theorem
\ref{thm: chara tensor products} there exists $C\in\U_{1}$ such
that $\bigcap_{a\in A\cap C}B_{a}\in\U_{2}$. And this cannot be,
as $A\cap C$ is infinite (since $\U_{1}$ is not principal) and hence
$\bigcap_{a\in A\cap C}B_{a}=\emptyset.$ 
\end{example}
We want to generalize the previous example and solve the following
two problems:
\begin{enumerate}
\item For which $\U_{1}\in\beta S_{1}$ we have that $\forall\U_{2}\in\beta S_{2}$
$\U_{1}\otimes\U_{2}=\mathcal{F}\left(\U_{1}\times\U_{2}\right)$?
\item For which $\U_{2}\in\beta S_{2}$ we have that $\forall\U_{1}\in\beta S_{1}$
$\U_{1}\otimes\U_{2}=\mathcal{F}\left(\U_{1}\times\U_{2}\right)$?
\end{enumerate}
Although it is not evident from Theorem \ref{thm: chara tensor products},
the property $\U_{1}\otimes\U_{2}=\mathcal{F}\left(\U_{1}\times\U_{2}\right)$
is symmetic in $\U_{1},\U_{2}$, in the sense that 
\[
\U_{1}\otimes\U_{2}=\mathcal{F}\left(\U_{1}\times\U_{2}\right)\Leftrightarrow\U_{2}\otimes\U_{1}=\mathcal{F}\left(\U_{2}\times\U_{1}\right)
\]
(this basic observations was pointed out to us by A.~Blass, see also
\cite[Corollary 9, Section 4]{key-1}). Therefore problems 1 and 2
are equivalent: a solution of the first entails directly a solution
of the second. And the second problem is rather simple to solve:
\begin{thm}
Let $\U_{2}\in\beta S_{2}$ and let $\kappa=|S_{1}|$. The following
are equivalent: 
\begin{enumerate}
\item $\forall\U_{1}\in\beta S_{1}\,\U_{1}\otimes\U_{2}=\mathcal{F}\left(\U_{1}\times\U_{2}\right);$ 
\item $\U_{2}$ is $\kappa^{+}$-complete. 
\end{enumerate}
\end{thm}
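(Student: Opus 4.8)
The plan is to reduce everything to the combinatorial criterion of Theorem \ref{thm: chara tensor products}, which says that $\U_1\otimes\U_2=\mathcal{F}(\U_1\times\U_2)$ holds exactly when, for every $A\in\U_1$ and every family $\{B_i\mid i\in A\}\subseteq\U_2$, some $C\in\U_1$ satisfies $\bigcap_{c\in C\cap A}B_c\in\U_2$. The implication $(2)\Rightarrow(1)$ is then immediate: given any $\U_1\in\beta S_1$, any $A\in\U_1$, and any $\{B_i\mid i\in A\}\subseteq\U_2$, the index set has size $|A|\le|S_1|=\kappa$, so $\kappa^{+}$-completeness of $\U_2$ gives $\bigcap_{i\in A}B_i\in\U_2$; taking $C=S_1\in\U_1$ (so that $C\cap A=A$) verifies the criterion. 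Hence $\U_1\otimes\U_2=\mathcal{F}(\U_1\times\U_2)$ for every $\U_1$.

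For $(1)\Rightarrow(2)$ I would argue contrapositively and construct a single $\U_1$ for which the criterion fails. Note first that if $\kappa$ is finite there is nothing to prove, since every ultrafilter is closed under finite intersections; so assume $\kappa$ infinite and that $\U_2$ is not $\kappa^{+}$-complete. The first step is to upgrade the witnessing family to a partition: from $\{A_i\mid i<\lambda\}\subseteq\U_2$ with $\lambda\le\kappa$ and $\bigcap_i A_i\notin\U_2$, set $D_i=S_2\setminus A_i$ and $P_i=D_i\setminus\bigcup_{j<i}D_j$ for $i<\lambda$, together with $P_\lambda=\bigcap_i A_i$. This yields a partition $\{P_i\mid i\in I\}$ of $S_2$ with $|I|\le\kappa$ and, crucially, $P_i\notin\U_2$ for every $i$.

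The second step turns this partition into the desired $\U_1$. Let $\pi:S_2\to I$ be the map sending each point to the block containing it, and let $\mathcal{G}'=\pi(\U_2)=\{J\subseteq I\mid\pi^{-1}(J)\in\U_2\}$ be the pushforward ultrafilter on $I$; since $\pi^{-1}(\{i\})=P_i\notin\U_2$, no singleton lies in $\mathcal{G}'$. Because $|I|\le\kappa=|S_1|$, fix an injection $g:I\to S_1$, put $A=g(I)$, and let $\U_1$ be the pushforward ultrafilter $\{E\subseteq S_1\mid g^{-1}(E)\in\mathcal{G}'\}$, so that $A\in\U_1$. For $a=g(i)\in A$ define $B_a=S_2\setminus P_i\in\U_2$. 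Then for an arbitrary $C\in\U_1$, writing $J=g^{-1}(C)\in\mathcal{G}'$, disjointness of the blocks gives $\bigcap_{a\in C\cap A}B_a=S_2\setminus\bigcup_{i\in J}P_i=S_2\setminus\pi^{-1}(J)$, which lies outside $\U_2$ because $\pi^{-1}(J)\in\U_2$. Thus no $C\in\U_1$ works, the criterion of Theorem \ref{thm: chara tensor products} fails, and $\U_1\otimes\U_2\neq\mathcal{F}(\U_1\times\U_2)$, contradicting (1).

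The main obstacle is precisely this last step: the naive choice $B_a=S_2\setminus A_i$ with the original, overlapping sets does not force $\bigcap_{a\in C\cap A}B_a\notin\U_2$ for \emph{every} $C\in\U_1$ simultaneously, since partial intersections can drift back into $\U_2$. Passing to a genuine \emph{partition} is what repairs this: it makes $J\mapsto\bigcup_{i\in J}P_i$ commute with Boolean operations, so that $\U_1$ can be chosen as exactly the pushforward of $\U_2$ along $\pi$ and $g$, guaranteeing $\pi^{-1}(g^{-1}(C))\in\U_2$ uniformly in $C$. Producing this partition, and checking that it has only $\le\kappa$ blocks, is the one genuinely non-formal move in the argument.
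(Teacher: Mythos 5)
Your proof is correct, but the hard direction $(1)\Rightarrow(2)$ is carried out by a genuinely different construction than the paper's. Both arguments reduce to the combinatorial criterion of Theorem \ref{thm: chara tensor products}, and the direction $(2)\Rightarrow(1)$ is the same (taking $C=S_1$ versus $C=A$ is immaterial). For the converse, the paper first minimizes: it takes $\lambda$ to be the \emph{least} cardinality of a subfamily of $\U_{2}$ with small intersection, uses that minimality to build a \emph{decreasing} chain $D_{i}=\bigcap_{j\leq i}B_{j}\in\U_{2}$ indexed by $i<\lambda$, and then lets $\U_{1}$ be \emph{any} ultrafilter on $\lambda\subseteq\kappa$ extending the tail filter; monotonicity of the chain plus cofinality of every $C\in\U_{1}$ forces $\bigcap_{c\in C}D_{c}=\bigcap_{i<\lambda}B_{i}\notin\U_{2}$. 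You instead avoid minimization entirely: you disjointify the complements into a partition $\{P_{i}\mid i\in I\}$ of $S_{2}$ with $|I|\leq\kappa$ and no block in $\U_{2}$, and take $\U_{1}$ to be the Rudin--Keisler pushforward of $\U_{2}$ itself along the block map (transported into $S_{1}$ by an injection), so that every partial intersection $\bigcap_{c\in C\cap A}B_{c}$ is literally the complement of a $\U_{2}$-large set. Your route buys several things: it is order-free (no well-ordering of the index set or cofinality argument is exploited), the failing family fails \emph{uniformly} for every $C\in\U_{1}$ rather than producing a single contradiction, the witness $\U_{1}$ is explicitly definable from $\U_{2}$ (no appeal to the ultrafilter lemma to extend the tail filter), and it isolates the structural content of the theorem: $\U_{2}$ fails $\kappa^{+}$-completeness exactly when it admits a nonprincipal RK-image on a set of size at most $\kappa$, and any such image obstructs factorization. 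The paper's route is shorter on the page, precisely because minimality and the order structure of $\lambda$ do the bookkeeping that your partition does by hand. Your handling of the edge cases (finite $\kappa$; the extra block $P_{\lambda}=\bigcap_{i}A_{i}$; nonemptiness of $C\cap A$) is sound.
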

\begin{proof}
$(1)\Rightarrow(2)$ Without loss of generality, we can assume that
$S_{1}=\kappa$. By contrast, let us suppose that $\U_{2}$ is not
$\kappa^{+}$-complete. Let 
\[
\lambda=\min\left\{ \mu\mid\exists F\subseteq\mathcal{U}\,\text{with}\,|F|=\mu\,\text{and\,}\bigcap_{B\in F}B\notin\U_{2}\right\} .
\]
As $\U_{2}$ is not $\kappa^{+}$-complete, $\lambda\leq\kappa$.
Let $\left\{ B_{i}\mid i<\lambda\right\} \subseteq\U_{2}$ be such
that $\bigcap_{i<\lambda}B_{i}\notin\U_{2}$. For every $i<\lambda$
we set $D_{i}=\bigcap_{j\leq i}B_{j}$. By the definition of $\lambda$
we have that every $D_{i}\in\U_{2}$, as it is an intersection of
fewer than $\lambda$ elements of $\U_{2}$, and clearly $D_{i}\supseteq D_{j}$
for every $i\leq j$. Now let $\U_{1}\in\beta\lambda\subseteq\beta\kappa$
be an ultrafilter that extends the filter of co-initial sets on $\lambda$,
so that every set $C\in\U_{1}$ is cofinal in $\lambda$. By hypothesis,
$\U_{1}\otimes\U_{2}=\mathcal{F}\left(\U_{1}\times\U_{2}\right)$.
If we set $A=\lambda$, by Theorem \ref{thm: chara tensor products}
we deduce that $\exists C\in\U_{1}\,\bigcap_{c\in C}D_{c}\in\U_{2}.$
But $C$ is cofinal in $\lambda$ and $\left\{ D_{i}\right\} _{i<\lambda}$
is a decreasing sequence, so $\bigcap_{c\in C}D_{c}=\bigcap_{i<\lambda}D_{i}=\bigcap_{i<\lambda}B_{i}\notin\U_{2}$,
which is absurd.

$(2)\Rightarrow(1)$ Let $A\in\U_{1}$ and let $\left\{ B_{i}\mid i\in A\right\} \subseteq\U_{2}$.
Then, as $|A|\leq\kappa$, by $\kappa^{+}$-completeness $\bigcap_{i\in A}B_{i}\in\U_{2}$,
hence the condition of Theorem \ref{thm: chara tensor products} is
fulfilled by setting $C=A$. 
\end{proof}
Let us call a ultrafilter $\U_{2}\in\beta S_{2}$ such that $\forall\U_{1}\in\beta S_{1}\,\U_{1}\otimes\U_{2}=\mathcal{F}\left(\U_{1}\times\U_{2}\right)$
a factorizing ultrafilter. If $\lambda=\left|S_{2}\right|$, from
the previous Theorem we deduce that:
\begin{itemize}
\item if $\lambda\leq\kappa$ then the unique factorizing ultrafilters are
the principal ones;
\item if $\lambda>\kappa$ then nonprincipal factorizing ultrafilter $\U_{2}\in\beta S_{2}$
might or might not exist: for example, if $\lambda=\kappa^{+}$then
such a nonprincipal factorizing ultrafilter exists if and only if
$\kappa^{+}$ is measurable (the existence of such ultrafilters is
consistent with ZF but not with ZFC).
\end{itemize}

\subsection{Tensor pairs}

We now want to characterize tensor products in terms of their monads.
To do so, we introduce the following definition: 
\begin{defn}
Let $\left(\alpha,\beta\right)\in\prescript{\star}{}{\left(S_{1}\times S_{2}\right)}$.
We say that $(\alpha,\beta)$ is a tensor pair (notation: $\left(\alpha,\beta\right)$
TT) if $\mathfrak{U}_{(\alpha,\beta)}=\mathfrak{U}_{\alpha}\otimes\mathfrak{U}_{\beta}$. 
\end{defn}
As, in general, $\beta S_{1}\otimes\beta S_{2}\subsetneq\beta\left(S_{1}\times S_{2}\right)$,
not all pairs $\left(\alpha,\beta\right)\in\prescript{\star}{}{\left(S_{1}\times S_{2}\right)}$
are tensor pairs. When $S_{1}=S_{2}=\N$, many properties of tensor
pairs have been proven (in the context of non-iterated hyperextensions)
by M.\ Di Nasso in \cite{10} (see also \cite{17}). We plan to show
that most of these characterizations can be extended (sometimes in
an even more general form) to arbitrary tensor pairs, with some simplifications
given by the possibility of iterating the star map.

The main advantage when working in iterated hyperextensions is that
they allow to write down easily generators of tensor products: 
\begin{thm}
\label{thm:Tensor types in iterated hyperextensions}Let $n,m\in\mathbb{N}$,
let $S_{1},S_{2}\in\mathbb{V}(X)$ be sets with $S_{1}$ completely
coherent, let $\U\in\beta S_{1}$, $\V\in\beta S_{2}$ and let $\alpha\in\mu_{n}(\U),\,\beta\in\mu_{m}(\V)$.
Then $\left(\alpha,S_{n}(\beta)\right)\in\mu_{n+m}\left(\U\otimes\V\right)$. 
\end{thm}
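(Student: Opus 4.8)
The plan is to unwind the definition of the monad and verify the required membership set by set. Since $\mu_{n+m}(\U\otimes\V)=\bigcap_{C\in\U\otimes\V}S_{n+m}(C)$, it suffices to fix an arbitrary $C\in\U\otimes\V$ and prove $(\alpha,S_n(\beta))\in S_{n+m}(C)$. Throughout I would work with the section function $h:S_1\to\wp(S_2)$ defined by $h(s_1)=\{s_2\in S_2\mid(s_1,s_2)\in C\}$, which is a genuine element of $\mathbb{V}(X)$; thus both $S_n(h)$ and $S_{n+m}(h)$ are available, and by transfer $S_\ell(h)(\xi)=\{\eta\mid(\xi,\eta)\in S_\ell(C)\}$ is exactly the $\xi$-section of $S_\ell(C)$, for every $\ell$.

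First I would extract information at level $n$ from $\alpha$. By the definition of $\U\otimes\V$, the set $A:=\{s_1\in S_1\mid h(s_1)\in\V\}$ belongs to $\U$, so $\alpha\in\mu_n(\U)\subseteq S_n(A)$. Transferring the definition of $A$ by $S_n$ gives $S_n(A)=\{\xi\in S_n(S_1)\mid S_n(h)(\xi)\in S_n(\V)\}$, whence $S_n(h)(\alpha)\in S_n(\V)$. This is the only place where the hypothesis $\alpha\in\mu_n(\U)$ is used.

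The core of the argument is a single application of transfer at level $n$ to a level-$m$ statement about $\beta$. I would consider the internal function $S_m(h)$ (the $m$-fold hyperextension of $h$) and the first-order formula $P$ asserting that for every $s_1\in S_1$ with $h(s_1)\in\V$ one has $\beta\in S_m(h)(s_1)$. I claim $P$ holds, and this is where complete coherence of $S_1$ enters: for a genuine element $s_1\in S_1$ it gives $S_m(s_1)=s_1$, so $s_1$ lies in the domain $S_m(S_1)$ and $S_m(h)(s_1)=S_m(h)(S_m(s_1))=S_m(h(s_1))$; and if $h(s_1)\in\V$ then $\beta\in\mu_m(\V)\subseteq S_m(h(s_1))$ by the definition of $\mu_m(\V)$. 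Applying $S_n$ to $P$, and using $S_n(S_m(h))=S_{n+m}(h)$, yields: for every $\xi\in S_n(S_1)$ with $S_n(h)(\xi)\in S_n(\V)$ one has $S_n(\beta)\in S_{n+m}(h)(\xi)$. Specializing to $\xi=\alpha$ (legitimate because complete coherence also gives $S_n(S_1)\subseteq S_{n+m}(S_1)$, so $\alpha$ lies in the domain of $S_{n+m}(h)$) and feeding in $S_n(h)(\alpha)\in S_n(\V)$ from the previous step, I obtain $S_n(\beta)\in S_{n+m}(h)(\alpha)$. Since $S_{n+m}(h)(\alpha)$ is the $\alpha$-section of $S_{n+m}(C)$, this says precisely that $(\alpha,S_n(\beta))\in S_{n+m}(C)$, which completes the argument as $C$ was arbitrary.

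The step I expect to be the main obstacle is making the transfer in the previous paragraph rigorous, since the statement one is tempted to transfer, namely ``$\beta\in S_m(B)$ for every $B\in\V$'', involves the external iterated star map $B\mapsto S_m(B)$, which is not an internal object and cannot be transferred. The resolution is to replace that external operation by the genuinely internal function $S_m(h)$ and to absorb the identification $S_m(h)(s_1)=S_m(h(s_1))$ into the completely coherent hypothesis on $S_1$; the very same hypothesis is what guarantees that $\alpha\in S_n(S_1)$ is an admissible argument of $S_{n+m}(h)$. Once these two distinct uses of complete coherence are isolated, everything else is routine bookkeeping of sections under transfer.
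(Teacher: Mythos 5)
Your proof is correct and follows essentially the same route as the paper's: both unwind the tensor product through the section map $s_{1}\mapsto\left\{ s_{2}\mid\left(s_{1},s_{2}\right)\in C\right\}$, use complete coherence of $S_{1}$ to identify $S_{m}(h)(s_{1})$ with $S_{m}\left(h(s_{1})\right)$ for standard $s_{1}$, and conclude with a transfer under $S_{n}$. The only differences are organizational: the paper establishes the two-way equivalence $A\in\U\otimes\V\Leftrightarrow\left(\alpha,S_{n}(\beta)\right)\in S_{n+m}(A)$ by a chain of equivalences, whereas you verify just the one inclusion needed for monad membership and are more explicit about the internality of the objects to which transfer is applied.
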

\begin{proof}
Let $A\subseteq S_{1}\times S_{2}$. Then 
\[
A\in\mathfrak{\mathfrak{U}}_{\alpha}\otimes\mathfrak{U}_{\beta}\Leftrightarrow\left\{ s_{1}\in S_{1}\mid\left\{ s_{2}\in S_{2}\mid\left(s_{1},s_{2}\right)\in A\right\} \in\mathfrak{U}_{\beta}\right\} \in\mathfrak{U}_{\alpha}.
\]
Now, by definition, $\left\{ s_{2}\in S_{2}\mid\left(s_{1},s_{2}\right)\in A\right\} \in\mathfrak{U}_{\beta}$
if and only if 
\[
\beta\in S_{m}\left(\left\{ s_{2}\in S_{2}\mid\left(s_{1},s_{2}\right)\in A\right\} \right)=\left\{ s_{2}\in S_{m}\left(S_{2}\right)\mid\left(s_{1},s_{2}\right)\in S_{m}\left(A\right)\right\} ,
\]
as $S_{m}\left(s_{1}\right)=s_{1}$ for every $s_{1}\in S_{1}$, since
$S_{1}$ is completely coherent. But 
\[
\beta\in\left\{ s_{2}\in S_{m}\left(S_{2}\right)\mid\left(s_{1},s_{2}\right)\in S_{m}\left(A\right)\right\} \Leftrightarrow
\]
\[
\left\{ s_{1}\in S_{1}\mid\left(s_{1},\beta\right)\in S_{m}(A)\right\} \in\mathfrak{U}_{\alpha}\Leftrightarrow\left(\alpha,S_{n}(\beta)\right)\in S_{n+m}(A).\qedhere
\]
\end{proof}
\begin{cor}
\label{cor:operations semigroups}Let $\left(S,\cdot\right)\in\mathbb{V}(X)$
be a semigroup. Assume that $S$ is completely coherent. Let $\U,\V\in\beta S$,
let $\alpha\in\mu_{n}(\U),\,\beta\in\mu_{m}(\V)$. Then $\alpha\cdot S_{n}\left(\beta\right)\in\mu_{n+m}\left(\U\odot\V\right)$. 
\end{cor}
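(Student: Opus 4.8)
The plan is to derive this corollary as a direct specialization of Theorem \ref{thm:Tensor types in iterated hyperextensions}, using the fact that the semigroup operation $\odot$ on $\beta S$ is, by the Example following the tensor product definition, nothing but the lift $\overline{f}$ of the multiplication map $f:S\times S\rightarrow S$, $f(a,b)=a\cdot b$, composed with the tensor product. Concretely, the identity to exploit is $\U\odot\V=\overline{f}(\U\otimes\V)$, so that the monad of $\U\odot\V$ is the $\star$-image of the monad of $\U\otimes\V$ by Theorem \ref{thm:functions and monads}(2).

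First I would take $S_1=S_2=S$ (which is completely coherent by hypothesis) and apply Theorem \ref{thm:Tensor types in iterated hyperextensions} with $\alpha\in\mu_n(\U)$ and $\beta\in\mu_m(\V)$; this yields immediately that $\left(\alpha,S_n(\beta)\right)\in\mu_{n+m}(\U\otimes\V)$. Next I would apply the extended multiplication map to this generator. Since $\U\odot\V=\overline{f}(\U\otimes\V)$ and the relevant ambient extension is $S_{n+m}$, Theorem \ref{thm:functions and monads}(2) gives $\mu_{n+m}(\U\odot\V)=S_{n+m}(f)\bigl(\mu_{n+m}(\U\otimes\V)\bigr)$, so it suffices to compute the value of $S_{n+m}(f)$ on the pair $\left(\alpha,S_n(\beta)\right)$ and check that it equals $\alpha\cdot S_n(\beta)$.

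The one point requiring care, which I expect to be the main (though modest) obstacle, is verifying that $S_{n+m}(f)\left(\alpha,S_n(\beta)\right)=\alpha\cdot S_n(\beta)$, i.e.\ that the transferred multiplication evaluated at these hyperextended arguments coincides with the iterated-hyperextension product $\cdot$ already defined on $\prescript{\bullet}{}{S}$. This is where complete coherence of $S$ is used again: because $\alpha\in S_n(S)\subseteq S_{n+m}(S)$ and $S_n(\beta)\in S_{n+m}(S)$ both lie in the common extension $S_{n+m}(S)$, transfer of the defining property $f(a,b)=a\cdot b$ from $S$ up through the elementary chain identifies $S_{n+m}(f)$ with the operation $\cdot$ on $S_{n+m}(S)$, and the arguments are genuinely elements of that level thanks to coherence (so the two occurrences of the symbol $\cdot$ agree). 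Once this identification is in place the conclusion $\alpha\cdot S_n(\beta)\in\mu_{n+m}(\U\odot\V)$ follows.

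I would keep the write-up short, essentially a two-line deduction: invoke Theorem \ref{thm:Tensor types in iterated hyperextensions} to place $\left(\alpha,S_n(\beta)\right)$ in $\mu_{n+m}(\U\otimes\V)$, then push forward along $\overline{f}$ using Theorem \ref{thm:functions and monads}(2) together with the Example identifying $\overline{f}(\U\otimes\V)$ with $\U\odot\V$, remarking only that the transferred multiplication agrees with $\cdot$ by complete coherence. No genuine computation is needed beyond this functoriality argument.
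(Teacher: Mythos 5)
Your proposal is correct and follows essentially the same route as the paper: apply Theorem \ref{thm:Tensor types in iterated hyperextensions} to place $\left(\alpha,S_{n}(\beta)\right)$ in $\mu_{n+m}\left(\U\otimes\V\right)$, then push forward through $\overline{f}$ via Theorem \ref{thm:functions and monads}(2) using $\U\odot\V=\overline{f}\left(\U\otimes\V\right)$. The only difference is that you make explicit the identification $S_{n+m}(f)\left(\alpha,S_{n}(\beta)\right)=\alpha\cdot S_{n}(\beta)$ (via coherence and transfer along the elementary chain), a point the paper leaves implicit.
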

\begin{proof}
$\U\odot\V=\overline{f}\left(\U\otimes\V\right)$, where $f:S^{2}\rightarrow S$
is the function that maps every pair $(a,b)\in S^{2}$ in $a\cdot b$.
Hence we can conclude by applying Theorem \ref{thm:functions and monads}.(2)
as, by Theorem \ref{thm:Tensor types in iterated hyperextensions},
$\left(\alpha,S_{n}(\beta)\right)\in\mu_{n+m}\left(\U\otimes\V\right)$. 
\end{proof}
\begin{rem}
In Theorem \ref{thm: bridge map} we showed that $\bigslant{\prescript{\bullet}{}{Y}}{\sim_{Y}}\cong\beta Y$.
Theorem \ref{thm:Tensor types in iterated hyperextensions} allows
to refine this result when $Y=S$ is a semigroup: if we let $\odot:\prescript{\bullet}{}{S^{2}}\rightarrow\prescript{\bullet}{}{S}$
be the map such that, for every $\alpha,\beta\in\prescript{\bullet}{}{S}$
\[
\alpha\odot\beta=\alpha\cdot S_{h(a)}(\beta),
\]
where $h(\alpha)=\min\left\{ n\in\N\mid\alpha\in S_{n}(S)\right\} $,
we get that $\left(\beta S,\odot\right)$ and $\bigslant{\left(\prescript{\bullet}{}{S},\odot\right)}{\sim_{Y}}$
are isomorphic as semigroups\footnote{This result could be improved to a topological isomorphism by introducing
the star topology on $\prescript{\bullet}{}{S}$, but we will not
consider this topological approach here.}.
\end{rem}
To simplify the notations, from now on we will assume that $\left(\alpha,\beta\right)\in\prescript{\ast}{}{\left(S_{1}\times S_{2}\right)}$,
as the characterization for the general cases where $\alpha\in S_{n}\left(S_{1}\right),$
$\beta\in S_{m}\left(S_{2}\right)$ can be analogously deduced from
Theorem \ref{thm:Tensor types in iterated hyperextensions}.

In the case of non-iterated hyperextensions, tensor pairs have been
studied mostly for the product $\N\times\N$. In this case, a characterization
was given by C.\ Puritz in \cite{25}, where he proved that

\[
(\alpha,\beta)\ \text{TT}\Leftrightarrow\alpha<er(\beta),
\]

where

\[
er(\beta)=\left\{ \prescript{\ast}{}{f(\beta)}\mid f:\N\rightarrow\N,\prescript{\ast}{}{f(\beta)}\in\prescript{\ast}{}{\N}\setminus\N.\right\} 
\]

\begin{rem}
In $\prescript{\ast\ast}{}{\N}$ it is very simple to see that Puritz's
characterization is not symmetric, in the sense that the condition
$\beta>er(\alpha)$ does not entail that $(\alpha,\beta)$ is TT.
In fact, let $\alpha$ be a prime number in $\prescript{\ast}{}{\N}$
and let $\beta=\left(\prescript{\ast}{}{\alpha}\right)^{\alpha}$.
Then $\beta>er(\alpha)$, as $\left(\prescript{\ast\ast}{}{f}\right)(\alpha)=\left(\prescript{\ast}{}{f}\right)(\alpha)\in\prescript{\ast}{}{\N}$
for every $f\in\N^{\N}$, whilst $\beta\in\prescript{\ast\ast}{}{\N}\setminus\prescript{\ast}{}{\N}$.
However, if $f$ is the function such that, if $n=p_{1}^{a_{1}}\cdot\dots\cdot p_{h}^{a_{h}}\in\N$
is the factorization of $n$ as product of distinct prime numbers,
then 
\[
f(n)=\max_{j=1,\dots,h}a_{j},
\]
we have that $\left(\prescript{\ast\ast}{}{f}\right)(\beta)=\alpha$,
hence by Puritz's characterization $(\alpha,\beta)$ is not TT. 
\end{rem}
The main problem in extending Puritz's characterization to arbitrary
sets is that it uses the order relation on $\N$, whilst arbitrary
products of sets might not be ordered. However, several of the equivalent
characterization of Puritz's condition given by M.~Di Nasso in \cite{10},
as well as some new ones, hold true also for arbitrary tensor pairs: 
\begin{thm}
\label{thm: main TT theo}Let $S_{1},S_{2}\in\mathbb{V}(X)$ be sets
and let $\left(\alpha_{1},\alpha_{2}\right)\in\prescript{\ast}{}{\ensuremath{\left(S_{1}\times S_{2}\right)}}$.
The following are equivalent: 
\begin{enumerate}
\item \label{enu: Main TT Theo (1)}$\left(\alpha_{1},\alpha_{2}\right)$
is a tensor pair; 
\item \label{enu: Main TT Theo (2)}$\left(\alpha_{1},\alpha_{2}\right)\sim_{u}\left(\alpha_{1},\prescript{\ast}{}{\alpha_{2}}\right)$; 
\item \label{enu: Main TT Theo (3)}$\forall A\subseteq S_{1}\times S_{2}$
if $\left(\alpha_{1},\alpha_{2}\right)\in\prescript{\ast}{}{A}$ then
$\exists s\in S_{1}$ such that $\left(s,\alpha_{2}\right)\in\prescript{\ast}{}{A};$ 
\item \label{enu: Main TT Theo (4)}$\forall A\subseteq S_{1}\times S_{2}$
if $\left(s,\alpha_{2}\right)\in\prescript{\ast}{}{A}$ $\forall s\in S_{1}$
then $\left(\alpha_{1},\alpha_{2}\right)\in\prescript{\ast}{}{A};$ 
\item \label{enu: Main TT Theo (5)}for every sets $S_{3},S_{4}$, for every
functions $f:\,S_{1}\rightarrow S_{3}$, $g:\,S_{2}\rightarrow S_{4}$
$\left(\prescript{\ast}{}{f\left(\alpha_{1}\right)},\prescript{\ast}{}{g\left(\alpha_{2}\right)}\right)$
is a tensor pair; 
\item \label{enu: Main TT Theo (6)}there exist sets $S_{3},S_{4}$, a bijection
$f:\,S_{1}\rightarrow S_{3}$ and an injective function $g:\,S_{2}\rightarrow S_{4}$
such that $\left(\prescript{\ast}{}{f\left(\alpha_{1}\right)},\prescript{\ast}{}{g\left(\alpha_{2}\right)}\right)$
is a tensor pair. 
\end{enumerate}
\end{thm}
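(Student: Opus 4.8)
The plan is to prove this by establishing a cycle of implications. Since the equivalences involve a chain, I would aim for $(1)\Rightarrow(3)\Rightarrow(4)\Rightarrow(1)$ to close the first loop among the combinatorial conditions, then handle $(2)$ by relating it directly to $(1)$, and finally treat $(5),(6)$ together since $(5)\Rightarrow(6)$ is trivial (take $S_3=S_1$, $S_4=S_2$ and both maps the identity) and the real content is $(6)\Rightarrow(1)$ with $(1)\Rightarrow(5)$.

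The most natural starting point is to unwind what $(\alpha_1,\alpha_2)$ TT means concretely. By definition $(\alpha_1,\alpha_2)$ is a tensor pair iff $\mathfrak{U}_{(\alpha_1,\alpha_2)}=\mathfrak{U}_{\alpha_1}\otimes\mathfrak{U}_{\alpha_2}$, i.e.\ for every $A\subseteq S_1\times S_2$ we have $(\alpha_1,\alpha_2)\in\prescript{\ast}{}{A}$ iff $A\in\mathfrak{U}_{\alpha_1}\otimes\mathfrak{U}_{\alpha_2}$. Expanding the tensor product, $A\in\mathfrak{U}_{\alpha_1}\otimes\mathfrak{U}_{\alpha_2}$ iff $\{s_1\in S_1\mid \{s_2\mid (s_1,s_2)\in A\}\in\mathfrak{U}_{\alpha_2}\}\in\mathfrak{U}_{\alpha_1}$. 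The key translation step is to rewrite membership in $\mathfrak{U}_{\alpha_2}$ as an internal condition: $\{s_2\mid(s_1,s_2)\in A\}\in\mathfrak{U}_{\alpha_2}$ iff $\alpha_2\in\prescript{\ast}{}{\{s_2\mid(s_1,s_2)\in A\}}$. For a standard $s_1\in S_1$ (using $s_1=\prescript{\ast}{}{s_1}$), this last set transfers to $\{s_2\in\prescript{\ast}{}{S_2}\mid(s_1,s_2)\in\prescript{\ast}{}{A}\}$, so the inner condition becomes exactly $(s_1,\alpha_2)\in\prescript{\ast}{}{A}$. Thus the set figuring in the outer ultrafilter is precisely $A_{\alpha_2}:=\{s_1\in S_1\mid(s_1,\alpha_2)\in\prescript{\ast}{}{A}\}$, and $A\in\mathfrak{U}_{\alpha_1}\otimes\mathfrak{U}_{\alpha_2}$ iff $\alpha_1\in\prescript{\ast}{}{A_{\alpha_2}}$. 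This reformulation is what immediately yields $(1)\Leftrightarrow(3)\Leftrightarrow(4)$: condition $(3)$ says $(\alpha_1,\alpha_2)\in\prescript{\ast}{}{A}$ forces $A_{\alpha_2}\neq\emptyset$, while $(4)$ says $A_{\alpha_2}=S_1$ forces $(\alpha_1,\alpha_2)\in\prescript{\ast}{}{A}$; since $\prescript{\ast}{}{A_{\alpha_2}}\ni\alpha_1$ is equivalent to $(\alpha_1,\alpha_2)\in\prescript{\ast}{}{A}$ being controlled by $A_{\alpha_2}\in\mathfrak{U}_{\alpha_1}$, both are reformulations of the same statement once one applies them to $A$ and its complement.

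For $(2)$, the condition $(\alpha_1,\alpha_2)\sim_u(\alpha_1,\prescript{\ast}{}{\alpha_2})$ says these two points generate the same ultrafilter on the relevant product, and I would verify that the pair $(\alpha_1,\prescript{\ast}{}{\alpha_2})$ is \emph{automatically} a tensor pair by the computation in Theorem \ref{thm:Tensor types in iterated hyperextensions} (with $n=1$, $m=1$ there, or rather the version placing $\alpha_2$ one level up). Indeed $\prescript{\ast}{}{\alpha_2}$ sits at a strictly higher hyperextension level than $\alpha_1$, which is exactly the configuration that forces the tensor-pair property; so $(2)$ holds iff $(\alpha_1,\alpha_2)$ generates the same ultrafilter as a known tensor pair, i.e.\ iff $(\alpha_1,\alpha_2)$ is itself a tensor pair. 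For the functional conditions, $(1)\Rightarrow(5)$ follows from Theorem \ref{thm:functions and monads}(2): the map $f\times g$ pushes the tensor product $\mathfrak{U}_{\alpha_1}\otimes\mathfrak{U}_{\alpha_2}$ to $\overline{f}(\mathfrak{U}_{\alpha_1})\otimes\overline{g}(\mathfrak{U}_{\alpha_2})$ because $\otimes$ commutes with products of maps, and $\prescript{\ast}{}{f}(\alpha_1),\prescript{\ast}{}{g}(\alpha_2)$ generate the image ultrafilters. The genuinely delicate implication is $(6)\Rightarrow(1)$: here I must recover the tensor-pair property of the \emph{original} pair from that of its image under maps that are only a bijection and an injection. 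The plan is to use that a bijection $f$ and an injection $g$ are reversible on their ranges, so via Theorem \ref{thm:functions and monads}(1) (which says $\prescript{\ast}{}{h}(\gamma)=\gamma$ when both $\gamma$ and its image lie in the same monad) I can transport the tensor condition back along left inverses; this is the step where I expect the main obstacle, namely checking that injectivity of $g$ (rather than bijectivity) still suffices to pull back the product-ultrafilter equality, which should follow because injectivity makes $g$ invertible on its image and condition $(3)$ is insensitive to enlarging the target set $S_4$.
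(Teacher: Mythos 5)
Your reduction of conditions (1), (3), (4) to statements about the section set $A_{\alpha_{2}}:=\left\{ s_{1}\in S_{1}\mid\left(s_{1},\alpha_{2}\right)\in\prescript{\ast}{}{A}\right\} $ is correct, and so are your arguments for (1)$\Rightarrow$(3) and for (3)$\Leftrightarrow$(4) via complements. But the step that closes your first cycle, (4)$\Rightarrow$(1), is \emph{not} justified by ``applying them to $A$ and its complement.'' Condition (4) is vacuous unless $A_{\alpha_{2}}=S_{1}$, and condition (3) only yields $A_{\alpha_{2}}\neq\emptyset$; for a generic $A$ the set $A_{\alpha_{2}}$ is neither empty nor all of $S_{1}$, so applying (3) and (4) to $A$ and $A^{c}$ gives you nothing beyond nonemptiness, whereas (1) requires the much finer fact that $\alpha_{1}\in\prescript{\ast}{}{A_{\alpha_{2}}}$. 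The missing idea is an auxiliary set: assuming (4) and $\alpha_{1}\in\prescript{\ast}{}{A_{\alpha_{2}}}$, apply (4) to $A':=A\cup\left(\left(S_{1}\setminus A_{\alpha_{2}}\right)\times S_{2}\right)$, which satisfies $\left(s,\alpha_{2}\right)\in\prescript{\ast}{}{A'}$ for \emph{every} $s\in S_{1}$; then $\left(\alpha_{1},\alpha_{2}\right)\in\prescript{\ast}{}{A'}$, and since $\alpha_{1}\notin\prescript{\ast}{}{\left(S_{1}\setminus A_{\alpha_{2}}\right)}$ we get $\left(\alpha_{1},\alpha_{2}\right)\in\prescript{\ast}{}{A}$; the converse implication follows by running this argument on $A^{c}$, using $\left(A^{c}\right)_{\alpha_{2}}=S_{1}\setminus A_{\alpha_{2}}$. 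This is a one-line fix, but it is a genuine missing step, and your cycle rests on it explicitly. (It is worth noting that the paper quietly needs the same observation: in its proof of (4)$\Rightarrow$(5) it converts ``$\left(\prescript{\ast}{}{f}\left(\alpha_{1}\right),\prescript{\ast}{}{g}\left(\alpha_{2}\right)\right)$ is not a tensor pair'' directly into a witness of the failure of (4), which is exactly the implication being discussed.)

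Once that step is filled, the rest of your proposal is correct and takes a genuinely different route from the paper's. The paper proves a single cycle (1)$\Rightarrow$(2)$\Rightarrow$(3)$\Rightarrow$(4)$\Rightarrow$(5)$\Rightarrow$(1), with (5)$\Rightarrow$(6) and (6)$\Rightarrow$(1) attached, establishing (4)$\Rightarrow$(5) and (6)$\Rightarrow$(1) by contradiction through pullbacks and pushforwards of witness sets, and using surjectivity of $f$ in the latter. You instead prove (1)$\Rightarrow$(5) directly from the functoriality identity $\overline{\left(f\times g\right)}\left(\U\otimes\V\right)=\overline{f}\left(\U\right)\otimes\overline{g}\left(\V\right)$ combined with Theorem~\ref{thm:functions and monads}(2), and you obtain (6)$\Rightarrow$(1) by transporting backwards along left inverses: apply your own (1)$\Rightarrow$(5) to the tensor pair $\left(\prescript{\ast}{}{f}\left(\alpha_{1}\right),\prescript{\ast}{}{g}\left(\alpha_{2}\right)\right)$ with the maps $f^{-1}$ and any left inverse $g'$ of $g$, and conclude since $\prescript{\ast}{}{g'}\left(\prescript{\ast}{}{g}\left(\alpha_{2}\right)\right)=\alpha_{2}$ by transfer of $g'\circ g=\mathrm{id}_{S_{2}}$ (you do not actually need Theorem~\ref{thm:functions and monads}(1) here; plain transfer of the composition identity suffices). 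This route is cleaner than the paper's contradiction arguments, it avoids the glossed step mentioned above, and it even shows that the bijectivity of $f$ in (6) can be weakened to injectivity. Your treatment of (2), via Theorem~\ref{thm:Tensor types in iterated hyperextensions}, coincides with the paper's (and shares its reliance on the coherence hypothesis of that theorem).
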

\begin{proof}
(\ref{enu: Main TT Theo (1)})$\Rightarrow$(\ref{enu: Main TT Theo (2)})
This is an immediate consequence of Theorem \ref{thm:Tensor types in iterated hyperextensions}.

(\ref{enu: Main TT Theo (2)})$\Rightarrow$(\ref{enu: Main TT Theo (3)})
Let $A\subseteq S_{1}\times S_{2}$ be such that $\left(\alpha_{1},\alpha_{2}\right)\in\prescript{\ast}{}{A}$.
As $\left(\alpha_{1},\alpha_{2}\right)\sim_{S_{1}\times S_{2}}\left(\alpha_{1},\prescript{\ast}{}{\alpha_{2}}\right)$,
we have that $\left(\alpha_{1},\prescript{\ast}{}{\alpha_{2}}\right)\in\prescript{\ast\ast}{}{A}$,
namely 
\[
\alpha_{1}\in\prescript{\ast}{}{\left\{ s\in S_{1}\mid\left(s,\alpha_{2}\right)\in\prescript{\ast}{}{A}\right\} },
\]
hence $\left\{ s\in S_{1}\mid\left(s,\alpha_{2}\right)\in\prescript{\ast}{}{A}\right\} \neq\emptyset$.

(\ref{enu: Main TT Theo (3)})$\Rightarrow$(\ref{enu: Main TT Theo (4)})
This is straightforward, as (\ref{enu: Main TT Theo (4)}) is the
contrapositive of (\ref{enu: Main TT Theo (3)}) applied to $A^{c}$.

(\ref{enu: Main TT Theo (4)})$\Rightarrow$(\ref{enu: Main TT Theo (5)})
By contrast: assume that there exists sets $S_{3},S_{4}$ and functions
$f:S_{1},\rightarrow S_{3},\,g:S_{2}\rightarrow S_{4}$ such that
$\left(\prescript{\ast}{}{f\left(\alpha_{1}\right)},\prescript{\ast}{}{g\left(\alpha_{2}\right)}\right)$
is not a tensor pair. Let $A\subseteq S_{3}\times S_{4}$ be such
that $\left(s_{3},\prescript{\ast}{}{g\left(\alpha_{2}\right)}\right)\in\prescript{\ast}{}{A}$
for every $s_{3}\in S_{3}$ but $\left(\prescript{\ast}{}{f\left(\alpha_{1}\right)},\prescript{\ast}{}{g\left(\alpha_{2}\right)}\right)\notin\prescript{\ast}{}{A}$.
Let 
\[
X_{A}=\left\{ \left(s_{1},s_{2}\right)\in S_{1}\times S_{2}\mid\left(f\left(s_{1}\right),g\left(s_{2}\right)\right)\in A\right\} .
\]
By construction, $\left(s_{1},\alpha_{2}\right)\in\prescript{\ast}{}{X_{A}}$
for every $s_{1}\in S_{1}$, hence by (\ref{enu: Main TT Theo (4)})
$\alpha_{1}\in\prescript{\ast}{}{X_{A}}$, namely $\left(\prescript{\ast}{}{f\left(\alpha_{1}\right)},\prescript{\ast}{}{g\left(\alpha_{2}\right)}\right)\in\prescript{\ast}{}{A}$,
which is absurd.

(\ref{enu: Main TT Theo (5)})$\Rightarrow$(\ref{enu: Main TT Theo (6)})
This is straightforward.

(\ref{enu: Main TT Theo (5)})$\Rightarrow$(\ref{enu: Main TT Theo (1)})
Just set $S_{1}=S_{3}$, $S_{2}=S_{4}$, $f=id_{S_{1}}$, $g=id_{S_{2}}$.

(\ref{enu: Main TT Theo (6)})$\Rightarrow$(\ref{enu: Main TT Theo (1)})
By contrast, assume that $\left(\alpha_{1},\alpha_{2}\right)$ is
not a tensor pair. By the equivalence (\ref{enu: Main TT Theo (5)})$\Leftrightarrow$(\ref{enu: Main TT Theo (1)}),
there exists $A\subseteq S_{1}\times S_{3}$ such that $\left(s,\alpha_{2}\right)\in\prescript{\ast}{}{A}$
$\forall s\in S_{1}$ but $\left(\alpha_{1},\alpha_{2}\right)\notin\prescript{\ast}{}{A}$.
Let 
\[
X_{A}=(f,g)(A)=\left\{ \left(s_{3},s_{4}\right)\mid\exists\left(s_{1},s_{2}\right)\in A\,f\left(s_{1}\right)=s_{3},g\left(s_{2}\right)=s_{4}\right\} .
\]
As $f$ is surjective and $\left(s,\alpha_{2}\right)\in\prescript{\ast}{}{A}$
$\forall s\in S_{1}$, we have that $\left(s_{3},\prescript{\ast}{}{g\left(\alpha_{2}\right)}\right)\in\prescript{\ast}{}{X_{A}}$
for every $s_{3}\in S_{3}$. Hence $\left(\prescript{\ast}{}{f\left(\alpha_{1}\right)},\prescript{\ast}{}{g\left(\alpha_{2}\right)}\right)\in\prescript{\ast}{}{X_{A}}$,
as $\left(\prescript{\ast}{}{f\left(\alpha_{1}\right)},\prescript{\ast}{}{g\left(\alpha_{2}\right)}\right)$
is a tensor pair. But as $X=(f,g)(A)$ and $f,g$ are 1-1, we deduce
that $\left(\alpha_{1},\alpha_{2}\right)\in\prescript{\ast}{}{A}$,
which is absurd. 
\end{proof}
To prove that Theorem \ref{thm: main TT theo} entails Puritz result
and allows for a simple characterization of tensor pairs in many cases,
let us introduce the following definition: 
\begin{defn}
Let $S_{1},S_{2}$ be given sets. Let $Y\subseteq\beta\left(S_{1}\times S_{2}\right)$.
We say that $(\alpha,\beta)\in\prescript{\ast}{}{\left(S_{1}\times S_{2}\right)}$
is a $Y$-tensor pair if it is a tensor pair and $\mathfrak{U}_{\left(\alpha,\beta\right)}\in Y$.
\end{defn}
The basic observation is the following: 
\begin{rem}
If $(\alpha,\beta)$ is a $Y$-tensor pair then $(\alpha,\beta)\in\prescript{\ast}{}{A}$
for every $A\in\bigcap_{\U\in Y}\U$ (i.e., $(\alpha,\beta)$ generates
the filter $\bigcap_{\U\in Y}\U$). 
\end{rem}
\begin{example}
If $S_{1}=S_{2}=\N$ and $Y=\left\{ \U\otimes\V\mid\U,\V\in\beta\N\setminus\N\right\} $
then $Y$-tensor pairs are tensor pairs with both entries infinite
and, as 
\[
\Delta=\left\{ (n,m)\mid n<m\right\} \in\mathcal{W}
\]
for every $\mathcal{W\in}Y$, this shows that $\left(\alpha,\beta\right)\in\prescript{\ast}{}{\Delta}$
for every tensor pair $\left(\alpha,\beta\right)$. But then, by applying
Theorem \ref{thm: main TT theo}.(\ref{enu: Main TT Theo (5)}) with
$S_{3}=S_{4}=\N$, we deduce that for every $f,g:\,\mathbb{N}\rightarrow\mathbb{N}$
with $\prescript{\ast}{}{f\left(\alpha_{1}\right)},\prescript{\ast}{}{g\left(\alpha_{2}\right)}$
infinite we have $\left(\prescript{\ast}{}{f\left(\alpha_{1}\right)},\prescript{\ast}{}{g\left(\alpha_{2}\right)}\right)\in\prescript{\ast}{}{\Delta}$,
namely 
\[
f\left(\alpha_{1}\right)<er\left(\alpha_{2}\right)\,\forall f:\mathbb{N}\rightarrow\mathbb{N},
\]
which is one implication in Puritz's characterization. 
\end{example}
\begin{example}
Let $S_{1}=S_{2}=\mathbb{Z}$. Let $\alpha,\beta\in\prescript{\ast}{}{\Z}\setminus\mathbb{Z}$.
Let 
\begin{multline*}
A_{1}=\{z\in\mathbb{Z}\mid z\geq0,z\equiv0\mod2\},\,A_{2}=\{z\in\mathbb{Z}\mid z>0,z\equiv1\mod2\},\\
A_{3}=\{z\in\mathbb{Z}\mid z<0,z\equiv0\mod2\},\,A_{4}=\{z\in\mathbb{Z}\mid z<0,z\equiv1\mod2\}.
\end{multline*}

Let $i,j$ be such that $\alpha\in\prescript{\ast}{}{A_{i}},\beta\in\prescript{\ast}{}{A_{j}}$
and let $f,g\,:\mathbb{Z\rightarrow\mathbb{N}}$ be bijections such
that $f$ coincides with the absolute value on $A_{i}$ and $g$ coincides
with the absolute value on $A_{j}$. Then from conditions (\ref{enu: Main TT Theo (5)})
and (\ref{enu: Main TT Theo (6)}) in Theorem \ref{thm: main TT theo}
we deduce that $\left(\alpha,\beta\right)$ is a tensor pair iff $\left(|\alpha|,|\beta|\right)$
is a tensor pair, namely 
\[
(\alpha,\beta)\,\text{TT}\Leftrightarrow|\alpha|<\prescript{\ast}{}{h}\left(|\beta|\right)\,\forall h:\mathbb{N}\rightarrow\N\,\text{s.t.}\:\prescript{\ast}{}{h}\left(|\beta|\right)\notin\N,
\]
and it is hence straightforward to see that 
\[
\left(\alpha,\beta\right)\,\text{TT}\Leftrightarrow|\alpha|<|\prescript{\ast}{}{h}(\beta)|\,\forall h:\mathbb{Z}\rightarrow\mathbb{Z}\,\text{s.t.}\:\prescript{\ast}{}{h}\left(\beta\right)\notin\mathbb{Z}.
\]
\end{example}
\begin{example}
\label{exa: tensor pairs in Q}Let $S_{1}=S_{2}=\mathbb{Q}$. In $\beta\mathbb{Q}$
there are three kinds of ultrafilters: 
\begin{itemize}
\item principal ones, namely ultrafilters $\U\in\beta\mathbb{Q}$ such that
$\mu(\U)=\{q\}$ for some $q\in\mathbb{Q}$; 
\item quasi-principal, namely ultrafilters $\U\in\beta\mathbb{Q}$ such
that $\mu(\U)$ consists of finite nonstandard elements, in which
case it is very simple to show that there exists $q\in\mathbb{Q}$
such that 
\[
\mu(\U)\subset mon(q)=\left\{ \xi\in\prescript{\ast}{}{\mathbb{Q}}\mid\xi-q\,\text{is infinitesimal}\right\} ;
\]
\item infinite ultrafilters, namely ultrafilters $\U\in\beta\mathbb{Q}$
such that $\mu(\U)$ consists of infinite elements. 
\end{itemize}
Now let $(\alpha,\beta)\in\prescript{\ast}{}{\mathbb{\left(Q\times\mathbb{Q}\right)}}$.
When is it that $\left(\alpha,\beta\right)$ is a tensor pair? As
always, this is the case if $\{\alpha,\beta\}\cap\mathbb{Q}\neq\emptyset.$
If $\{\alpha,\beta\}\cap\mathbb{Q}=\emptyset$, we distinguish three
cases: 
\begin{enumerate}
\item \label{enu: ex1}both $\alpha$ and $\beta$ are infinite; 
\item \label{enu: ex2}both $\alpha$ and $\beta$ are finite; 
\item \label{enu: ex3}one is infinite, one is finite. 
\end{enumerate}
Notice that, as $(\varepsilon,\,^{\ast}\varepsilon$) is a tensor
pair for every infinitesimal $\varepsilon\in\prescript{\ast}{}{\mathbb{Q}}$,
Puritz's characterization does not hold (directly) in our present
case (as $\prescript{\ast}{}{\varepsilon}<\varepsilon$ for every
positive infinitesimal $\varepsilon$).

As, by Theorem \ref{thm: main TT theo}.(\ref{enu: Main TT Theo (5)}),
we know that $(\alpha,\beta)$ is a tensor type iff $(-\alpha,\beta)$
and $(\alpha,-\beta)$ are, we reduce to consider the case $\alpha>0,\beta>0.$
Moreover, let us observe that we can reduce to case (\ref{enu: ex1}).
In fact, if $\eta$ is any finite element in $\prescript{\ast}{}{\mathbb{Q}_{>0}}\setminus\mathbb{Q}$,
let $f_{\eta}:\,\mathbb{Q}\setminus\{st(\eta)\}\rightarrow\mathbb{Q}_{>0}$
be the function such that 
\[
\forall q\in\mathbb{Q}\,f_{\eta}(q)=\frac{1}{q-st(\eta)}.
\]
Then $f_{\eta}(\eta)$ is infinite and, as this function is bijective,
by points (\ref{enu: Main TT Theo (5)}) and (\ref{enu: Main TT Theo (6)})
of Theorem \ref{thm: main TT theo} we get that 
\begin{itemize}
\item if $\alpha$ is finite then $\left(\alpha,\beta\right)$ is TT iff
$\left(f_{\alpha}(\alpha),\beta\right)$ is TT; 
\item if $\beta$ is finite then $\left(\alpha,\beta\right)$ is TT iff
$\left(\alpha,f_{\beta}(\beta)\right)$ is TT. 
\end{itemize}
So we are left to study case (\ref{enu: ex1}). As a simple necessary
criterion, from Theorem \ref{thm: main TT theo}.(\ref{enu: Main TT Theo (5)})
we get that if $\left(\alpha,\beta\right)$ is TT then also the pair
of hypernatural parts $\left(\left\lfloor \alpha\right\rfloor ,\left\lfloor \beta\right\rfloor \right)$
is TT. This fact can be refined: as 
\[
\Delta_{\mathbb{Q}}=\left\{ \left(q_{1},q_{2}\right)\in\mathbb{Q}^{2}\mid q_{2}>q_{1}\right\} \in\mathfrak{U}_{\alpha}\otimes\mathfrak{U}_{\beta}
\]
whenever $\alpha,\beta$ are positive and infinite, we get from Theorem
\ref{thm: main TT theo}.(\ref{enu: Main TT Theo (5)}) that it must
be $\alpha<er_{\mathbb{Q}_{>0}}(\beta)$, where 
\[
er_{\mathbb{Q}_{>0}}(\beta)=\left\{ \prescript{\ast}{}{f}(\beta)\mid f:\mathbb{Q}_{>0}\rightarrow\mathbb{Q}_{>0},\prescript{\ast}{}{f}(\beta)\,\text{is infinite}\right\} .
\]
Let us show that the converse holds as well. Let $\alpha<er_{\mathbb{Q}_{>0}}(\beta)$.
By contrast, assume that $(\alpha,\beta)$ is not TT. Then by Theorem
\ref{thm: main TT theo}.(\ref{enu: Main TT Theo (4)}) there exists
$A\subseteq\mathbb{Q}^{2}$ such that $(q,\beta)\in\prescript{\ast}{}{A}$
for every $q\in\mathbb{Q}$ but $\left(\alpha,\beta\right)\notin\prescript{\ast}{}{A}$.
Let $f:\mathbb{Q}_{>0}\rightarrow\mathbb{Q}_{>0}$ be such that 
\[
\forall q\in\mathbb{Q}\,f(q):=\min\left\{ n\in\mathbb{N}\mid\exists s\in\mathbb{Q}_{>0}\,\left(s<n+1\right)\,\text{and}\,(s,q)\notin A\right\} .
\]
As $(q,\beta)\in\prescript{\ast}{}{A}$ for every $q\in\mathbb{Q}$
we have that $\prescript{\ast}{}{f}(\beta)$ is infinite. And, as
$(\alpha,\beta)\notin\prescript{\ast}{}{A}$, we have that $\prescript{\ast}{}{f}(\beta)\leq\alpha$,
which is absurd. 
\end{example}
\begin{example}
A similar proof can be used to show that, for every infinite $\alpha,\beta\in\prescript{\ast}{}{\mathbb{R}_{>0}}$,
$(\alpha,\beta)$ is TT iff $\alpha<er_{\mathbb{R}_{>0}}(\beta),$
where 
\[
er_{\mathbb{R}_{>0}}(\beta)=\left\{ \prescript{\ast}{}{f}(\beta)\mid f:\mathbb{R}_{>0}\rightarrow\mathbb{R}_{>0},\prescript{\ast}{}{f}(\beta)\,\text{is infinite}\right\} ,
\]
and following ideas similar to those of Example \ref{exa: tensor pairs in Q}
this can be used to characterize tensor pairs in $\mathbb{R}^{2}$.
This can be used also to characterize certain ultrafilters in $\beta\mathbb{C}$:
as $\mathbb{C\cong\mathbb{R}}^{2}$, for example we have that ultrafilters
in $\beta\mathbb{C}$ of the form $\U\oplus i\V$, with $\U,\V\in\beta\mathbb{R}$,
are generated by hypercomplex numbers of the form $\alpha+i\beta$
where $\left(\alpha,\beta\right)$ is TT in $\mathbb{R}^{2}$.

Moreover, as (assuming the continuum hypothesis) $\mathcal{F}:=\N^{\N}$
is in bijection with $\mathbb{R}$, from Theorem \ref{thm: main TT theo}.(\ref{enu: Main TT Theo (6)})
we get a characterization of tensor pairs in $\mathcal{F}^{2}$ and,
since $\N$ can be embedded in $\mathcal{F}$ just mapping any natural
number $n$ to the constant function with value $n$, this gives a
characterization of tensor pairs in $\N\times\mathcal{F}$ and $\mathcal{F}\times\N$.
This characterization is quite implicit; however, Theorem \ref{thm: main TT theo}
can be used to give explicit necessary and sufficient conditions even
in this case: in fact, for $\alpha\in\prescript{\ast}{}{\N}$ and
$\varphi\in\prescript{\ast}{}{\mathcal{F}}$ we have that 
\end{example}
\begin{itemize}
\item Necessary: if $(\alpha,\varphi)$ is TT then $\left(\prescript{\ast}{}{f}(\alpha),\prescript{\ast}{}{H}(\varphi)\right)$
is TT for every $f\in\mathcal{F},\,H:\mathcal{F}\rightarrow\N$. In
particular, by letting for every $n\in\N$ $H_{n}$ be the evaluation
in $\N$, we get that if $(\alpha,\varphi)$ is TT then $(\alpha,\varphi(n))$
is TT in $^{\ast}\left(\N^{2}\right)$ for every $n\in\N$. 
\item Sufficient: $\left(\alpha,\prescript{\ast}{}{\varphi}\right)$ is
TT. In particular, if we let $\V:=\mathfrak{U}_{\alpha}\otimes_{\mathcal{F}}\mathfrak{U}_{\varphi}\in\beta\N$
be the ultrafilter such that $\forall A\subseteq\N$ 
\[
A\in\V\Leftrightarrow\left\{ n\in\N\mid\left\{ f\in\mathcal{F\mid}f(n)\in A\right\} \in\mathfrak{U}_{\varphi}\right\} \in\mathfrak{U}_{\alpha},
\]
we get that $\left(\prescript{\ast}{}{\varphi}\right)(\alpha)\in\mu(\V)$
(these ultrafilters are studied in \cite{21}, where they are used
to study several Ramsey-theoretical combinatorial properties of $\N$). 
\end{itemize}

\subsection{Tensor $k$-ples}

If we consider products of $k$ sets, the natural generalization of
tensor pairs are tensor $k$-ples. 
\begin{defn}
Let $S_{1},\dots,S_{k}$ be sets and, for every $i\leq k$, let $\U_{i}\in\beta S_{i}$.
The tensor product $\U_{1}\otimes\dots\otimes\U_{k}$ is the unique
ultrafilter on $S_{1}\times\dots\times S_{k}$ such that, for every
$A\subseteq S_{1}\times\dots\times S_{k}$ we have that $A\in\U_{1}\otimes\dots\otimes\U_{k}$
if and only if 
\[
\left\{ s_{1}\in S_{1}\mid\left\{ s_{2}\in S_{2}\mid\dots\left\{ s_{k}\in S_{k}\mid\left(s_{1},\dots,s_{k}\right)\in A\right\} \in\U_{k}\right\} \dots\right\} \in\U_{1}.
\]

We say that $\left(\alpha_{1},\dots,\alpha_{k}\right)\in\prescript{\ast}{}{\left(S_{1}\times\dots\times S_{k}\right)}$
is a tensor $k$-ple if $\mathfrak{U}_{\left(\alpha_{1},\dots,\alpha_{k}\right)}=\mathfrak{U}_{\alpha_{1}}\otimes\dots\otimes\mathfrak{U}_{\alpha_{k}}$. 
\end{defn}
It is immediate to prove that $\U_{1}\otimes\dots\otimes\U_{k}$ is
an ultrafilter and that the operation $\otimes$ is associative (modulo
the usual identification of products $\left(S_{1}\times S_{2}\right)\times S_{3}=S_{1}\times\left(S_{2}\times S_{3}\right)=S_{1}\times S_{2}\times S_{3})$,
see e.g. \cite[Appendix]{polyextensions}. This allows to characterize
tensor $k$-ples in terms of pairs: 

\begin{thm}
\label{thm: tensor kples}Let $k\geq1$, let $S_{1},\dots,S_{k},S_{k+1}$
be given sets and let $\left(\alpha_{1},\dots,\alpha_{k+1}\right)\in\prescript{\ast}{}{\left(S_{1}\times\dots\times S_{k+1}\right)}$.
The following facts are equivalent: 
\begin{enumerate}
\item \label{enu: tensor k-ple (1)}$\left(\alpha_{1},\dots,\alpha_{k+1}\right)$
is a tensor $(k+1)$-ple; 
\item \label{enu: tensor k-ple (2)}$\left(\left(\alpha_{1},\dots,\alpha_{k}\right),\alpha_{k+1}\right)$
is a tensor pair and $\left(\alpha_{1},\dots,\alpha_{k}\right)$ is
a tensor $k$-ple; 
\item \label{enu: tensor k-ple (3)}$\left(\alpha_{k},\alpha_{k+1}\right)$
is a tensor pair and $\left(\alpha_{1},\dots,\alpha_{k}\right)$ is
a tensor $k$-ple; 
\item \label{enu: tensor k-ple (4)}$\forall i\leq k$ $\left(\alpha_{i},\alpha_{i+1}\right)$
is a tensor pair. 
\end{enumerate}
\end{thm}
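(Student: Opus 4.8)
The plan is to argue by induction on $k$, organizing the four statements into a cycle in which all but one step are formal. Write $\vec{\alpha}=(\alpha_{1},\dots,\alpha_{k})$ and let $\pi:S_{1}\times\dots\times S_{k+1}\to S_{1}\times\dots\times S_{k}$ be the projection onto the first $k$ coordinates. The equivalence $(\ref{enu: tensor k-ple (1)})\Leftrightarrow(\ref{enu: tensor k-ple (2)})$ is pure associativity: since $\otimes$ is associative on ultrafilters, $\mathfrak{U}_{\alpha_{1}}\otimes\dots\otimes\mathfrak{U}_{\alpha_{k+1}}=(\mathfrak{U}_{\alpha_{1}}\otimes\dots\otimes\mathfrak{U}_{\alpha_{k}})\otimes\mathfrak{U}_{\alpha_{k+1}}$, while Theorem \ref{thm:functions and monads}.(2) applied to $\pi$ gives $\overline{\pi}(\mathfrak{U}_{(\alpha_{1},\dots,\alpha_{k+1})})=\mathfrak{U}_{\vec{\alpha}}$ and $\overline{\pi}(\mathfrak{U}_{\alpha_{1}}\otimes\dots\otimes\mathfrak{U}_{\alpha_{k+1}})=\mathfrak{U}_{\alpha_{1}}\otimes\dots\otimes\mathfrak{U}_{\alpha_{k}}$. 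Combining these shows at once that $(\alpha_{1},\dots,\alpha_{k+1})$ is a tensor $(k+1)$-ple iff $\vec{\alpha}$ is a tensor $k$-ple and $(\vec{\alpha},\alpha_{k+1})$ is a tensor pair. The implication $(\ref{enu: tensor k-ple (2)})\Rightarrow(\ref{enu: tensor k-ple (3)})$ is immediate from Theorem \ref{thm: main TT theo}.(\ref{enu: Main TT Theo (5)}): applying the coordinate projection $\mathrm{proj}_{k}:S_{1}\times\dots\times S_{k}\to S_{k}$ to the first entry of the tensor pair $(\vec{\alpha},\alpha_{k+1})$ yields that $(\alpha_{k},\alpha_{k+1})$ is a tensor pair, and the tensor $k$-ple clause is shared. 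Finally $(\ref{enu: tensor k-ple (3)})\Leftrightarrow(\ref{enu: tensor k-ple (4)})$ is where the induction enters: by the inductive hypothesis applied to $\vec{\alpha}$, the statement ``$\vec{\alpha}$ is a tensor $k$-ple'' is equivalent to ``$(\alpha_{i},\alpha_{i+1})$ is a tensor pair for all $i\le k-1$'', so adjoining the clause that $(\alpha_{k},\alpha_{k+1})$ is a tensor pair turns (\ref{enu: tensor k-ple (3)}) into (\ref{enu: tensor k-ple (4)}).

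This leaves the single nonformal step, the hard half of $(\ref{enu: tensor k-ple (3)})\Rightarrow(\ref{enu: tensor k-ple (2)})$, which I isolate as a key lemma: if $\vec{\alpha}$ is a tensor $k$-ple and $(\alpha_{k},\alpha_{k+1})$ is a tensor pair, then $(\vec{\alpha},\alpha_{k+1})$ is a tensor pair. I would prove this by a secondary induction on $k$, in the slightly more general form with an arbitrary tail $\gamma\in\prescript{\ast}{}{V}$ in place of $\alpha_{k+1}$. The inductive step is a two-stage peeling through Theorem \ref{thm: main TT theo}.(\ref{enu: Main TT Theo (4)}): given $A$ with $(s,\gamma)\in\prescript{\ast}{}{A}$ for all $s\in S_{1}\times\dots\times S_{k}$, one first fixes the first $k-1$ coordinates to standard values and applies the tensor pair $(\alpha_{k},\gamma)$ to each section to deduce $(s_{1},\dots,s_{k-1},\alpha_{k},\gamma)\in\prescript{\ast}{}{A}$ for all standard $(s_{1},\dots,s_{k-1})$; regrouping $(\alpha_{k},\gamma)$ as a single tail in $S_{k}\times V$, condition (\ref{enu: Main TT Theo (4)}) again reduces the goal to the assertion that $((\alpha_{1},\dots,\alpha_{k-1}),(\alpha_{k},\gamma))$ is a tensor pair, which is the lemma for $k-1$ once one knows that $(\alpha_{k-1},(\alpha_{k},\gamma))$ is a tensor pair. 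Using associativity of $\otimes$ together with the hypothesis that $(\alpha_{k},\gamma)$ is a tensor pair, this last statement is in turn equivalent to the base case $k=2$.

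The genuine obstacle is therefore the base case $k=2$: if $(\alpha_{1},\alpha_{2})$ and $(\alpha_{2},\alpha_{3})$ are tensor pairs, then $(\alpha_{1},\alpha_{2},\alpha_{3})$ is a tensor triple. This is exactly where iterated hyperextensions pay off. By Theorem \ref{thm:Tensor types in iterated hyperextensions} applied twice (with associativity), the tuple $(\alpha_{1},\prescript{\ast}{}{\alpha_{2}},\prescript{\ast\ast}{}{\alpha_{3}})$ is a generator of $\mathfrak{U}_{\alpha_{1}}\otimes\mathfrak{U}_{\alpha_{2}}\otimes\mathfrak{U}_{\alpha_{3}}$, so it suffices to prove that $(\alpha_{1},\alpha_{2},\alpha_{3})\sim_{S_{1}\times S_{2}\times S_{3}}(\alpha_{1},\prescript{\ast}{}{\alpha_{2}},\prescript{\ast\ast}{}{\alpha_{3}})$. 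I would obtain this by cascading the $\sim$-shifts supplied by Theorem \ref{thm: main TT theo}.(\ref{enu: Main TT Theo (2)}): the hypothesis that $(\alpha_{2},\alpha_{3})$ is a tensor pair gives $(\alpha_{2},\alpha_{3})\sim(\alpha_{2},\prescript{\ast}{}{\alpha_{3}})$, and the hypothesis that $(\alpha_{1},\alpha_{2})$ is a tensor pair gives $(\alpha_{1},\alpha_{2})\sim(\alpha_{1},\prescript{\ast}{}{\alpha_{2}})$, so that each shift consumes exactly one of the two tensor pair hypotheses. The subtlety I expect to fight is precisely that membership of the nonstandard point in a section of $\prescript{\ast}{}{A}$ (a pointwise condition) must be reconciled with membership of that internal section in the corresponding starred ultrafilter (a filter condition), and these agree only when the coordinate being shifted lies strictly below the coordinate it must cross; the level separation built into $(\alpha_{1},\prescript{\ast}{}{\alpha_{2}},\prescript{\ast\ast}{}{\alpha_{3}})$ is what renders each one-coordinate shift legitimate. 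A convenient simplification throughout is that, since all the objects compared are ultrafilters, it is enough to establish a single inclusion in each equality, the reverse following by complementation.
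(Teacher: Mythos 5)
Your formal steps are all correct, and two of them improve on the paper: the equivalence (\ref{enu: tensor k-ple (1)})$\Leftrightarrow$(\ref{enu: tensor k-ple (2)}) via associativity and the projection $\pi$, and especially (\ref{enu: tensor k-ple (2)})$\Rightarrow$(\ref{enu: tensor k-ple (3)}) via Theorem \ref{thm: main TT theo}.(\ref{enu: Main TT Theo (5)}), which is cleaner than the paper's cylinder argument. Your two-stage peeling that reduces the hard implication (\ref{enu: tensor k-ple (3)})$\Rightarrow$(\ref{enu: tensor k-ple (2)}) to the triple case (``if $(\alpha_{1},\alpha_{2})$ and $(\alpha_{2},\alpha_{3})$ are tensor pairs then $(\alpha_{1},\alpha_{2},\alpha_{3})$ is a tensor triple'') is also sound reasoning, and, unlike the paper's own proof of this step, it correctly registers that the tensor $k$-ple hypothesis must actually be used. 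But the base case is exactly where your proposal stops being a proof: the ``cascading $\sim$-shifts'' is never carried out, the subtlety you flag (the shift $(\alpha_{2},\alpha_{3})\sim(\alpha_{2},\prescript{\ast}{}{\alpha_{3}})$ controls only starred \emph{standard} subsets of $S_{2}\times S_{3}$, while after fixing the first coordinate at $\alpha_{1}$ you face \emph{internal} sections $\prescript{\ast}{}{A}_{\alpha_{1}}$) is the entire content of the theorem rather than a technicality, and in addition Theorem \ref{thm:Tensor types in iterated hyperextensions} carries a complete-coherence hypothesis that Theorem \ref{thm: tensor kples} does not grant you.

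Moreover, this gap cannot be closed, because the base case is false as stated. Take $S_{1}=S_{2}=S_{3}=\N$, any $\alpha\in\prescript{\ast}{}{\N}\setminus\N$ and any $p\in\N$, and consider $(\alpha_{1},\alpha_{2},\alpha_{3})=(\alpha,p,\alpha)$. A pair with a standard entry is always a tensor pair, so $(\alpha,p)$ and $(p,\alpha)$ are both TT; yet the set $D=\left\{ (s_{1},s_{2},s_{3})\in\N^{3}\mid s_{1}=s_{3}\right\} $ belongs to $\mathfrak{U}_{(\alpha,p,\alpha)}$ but not to $\mathfrak{U}_{\alpha}\otimes\mathfrak{U}_{p}\otimes\mathfrak{U}_{\alpha}$ (its inner sections are singletons, which are never in the nonprincipal $\mathfrak{U}_{\alpha}$), so $(\alpha,p,\alpha)$ is not a tensor triple. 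The same example shows concretely why no cascade of one-coordinate shifts can succeed: both pair-shifts hold, namely $(\alpha,p)\sim(\alpha,\prescript{\ast}{}{p})=(\alpha,p)$ and $(p,\alpha)\sim(p,\prescript{\ast}{}{\alpha})$, and yet $(\alpha,p,\alpha)\not\sim(\alpha,p,\prescript{\ast\ast}{}{\alpha})$, since $D$ separates the two triples; the pairwise shifts are simply blind to correlations between the first and third coordinates. Note that this also refutes conditions (\ref{enu: tensor k-ple (3)}) and (\ref{enu: tensor k-ple (4)}) as sufficient for (\ref{enu: tensor k-ple (1)}), as well as the transitivity corollary the paper draws from this theorem, so the statement itself needs nondegeneracy hypotheses (for $\N$: all $\alpha_{i}$ infinite, exactly as in the paper's own example following the theorem; in general something like countable incompleteness of all the $\mathfrak{U}_{\alpha_{i}}$). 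For comparison, the paper's proof of (\ref{enu: tensor k-ple (3)})$\Rightarrow$(\ref{enu: tensor k-ple (2)}) collapses at the corresponding spot: it claims $(s_{k},\alpha_{k+1})\in\prescript{\ast}{}{B}$ for all standard $s_{k}$, where $B$ collects the pairs all of whose fibers lie in $A$; but membership in $\prescript{\ast}{}{B}$ demands the fiber condition over all \emph{internal} prefixes, while the hypothesis controls only standard ones (in the example above $B=\emptyset$), and that proof never invokes the tensor $k$-ple hypothesis at all. So your proposal is, if anything, more honest than the paper: it isolates the genuine difficulty instead of burying it --- but that difficulty is an actual counterexample, not a missing lemma, and under the hypotheses that repair the statement the known proofs (Puritz, Di Nasso) go through the order structure of $\N$ via the $er$-characterization rather than through formal $\sim$-manipulations.
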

\begin{proof}
(\ref{enu: tensor k-ple (1)})$\Rightarrow$(\ref{enu: tensor k-ple (2)})
By hypothesis, $\mathfrak{U}_{\left(\alpha_{1},\dots,\alpha_{k+1}\right)}=\mathfrak{U}_{\alpha_{1}}\otimes\dots\otimes\mathfrak{U}_{\alpha_{k+1}}$.
By the associativity of tensor products, $\mathfrak{U}_{\alpha_{1}}\otimes\dots\otimes\mathfrak{U}_{\alpha_{k+1}}=\left(\mathfrak{U}_{\alpha_{1}}\otimes\dots\otimes\mathfrak{U}_{\alpha_{k}}\right)\otimes\mathfrak{U}_{\alpha_{k+1}}$.
Let $\V=\mathfrak{U}_{\alpha_{1}}\otimes\dots\otimes\mathfrak{U}_{\alpha_{k}}$.
Then $\left(\left(\alpha_{1},\dots,\alpha_{k}\right),\alpha_{k+1}\right)\in\mu(\V\otimes\U)$,
namely $\left(\left(\alpha_{1},\dots,\alpha_{k}\right),\alpha_{k+1}\right)$
is a tensor pair, and $\left(\alpha_{1},\dots,\alpha_{k}\right)\in\mu\left(\mathfrak{U}_{\alpha_{1}}\otimes\dots\otimes\mathfrak{U}_{\alpha_{k}}\right)$,
namely $\left(\alpha_{1},\dots,\alpha_{k}\right)$ is a tensor $k$-ple.

(\ref{enu: tensor k-ple (2)})$\Rightarrow$(\ref{enu: tensor k-ple (1)})
$\mathfrak{U}_{\left(\alpha_{1},\dots,\alpha_{k},\alpha_{k+1}\right)}=\mathfrak{U}_{\left(\alpha_{1},\dots,\alpha_{k}\right)}\otimes\mathfrak{U}_{\alpha_{k+1}}$
as $\left(\left(\alpha_{1},\dots,\alpha_{k}\right),\alpha_{k+1}\right)$
is a tensor pair, and $\mathfrak{U}_{\left(\alpha_{1},\dots,\alpha_{k}\right)}=\mathfrak{U}_{\alpha_{1}}\otimes\dots\otimes\mathfrak{U}_{\alpha_{k}}$
as $\left(\alpha_{1},\dots,\alpha_{k}\right)$ is a tensor $k$-ple.

(\ref{enu: tensor k-ple (2)})$\Rightarrow$(\ref{enu: tensor k-ple (3)})
By contrast, assume that $\left(\alpha_{k},\alpha_{k+1}\right)$ is
not a tensor pair. Let $A\subseteq S_{k}\times S_{k+1}$ be such that
$\forall s_{k}\in S_{k}\,\left(s_{k},\alpha_{k+1}\right)\in\prescript{\ast}{}{A}$
but $\left(\alpha_{k},\alpha_{k+1}\right)\notin\prescript{\ast}{}{A}$.
Let 
\[
B=\left\{ \left(s_{1},\dots,s_{k+1}\right)\in S_{1}\times\dots\times S_{k+1}\mid\left(s_{k},s_{k+1}\right)\in A\right\} .
\]
By construction, $\forall\left(s_{1},\dots,s_{k}\right)\in S_{1}\times\dots\times S_{k}\,\left(\left(s_{1},\dots,s_{k}\right),\alpha_{k+1}\right)\in\prescript{\ast}{}{B}$.
As $\left(\left(\alpha_{1},\dots,\alpha_{k}\right),\alpha_{k+1}\right)$
is a tensor pair, this entails that $\left(\left(\alpha_{1},\dots,\alpha_{k}\right),\alpha_{k+1}\right)\in\prescript{\ast}{}{B}$,
hence $\left(\alpha_{k},\alpha_{k+1}\right)\in\prescript{\ast}{}{A}$,
which is absurd.

(\ref{enu: tensor k-ple (3)})$\Rightarrow$(\ref{enu: tensor k-ple (2)})
By contrast, assume that $\left(\left(\alpha_{1},\dots,\alpha_{k}\right),\alpha_{k+1}\right)$
is not a tensor pair. Let $T=S_{1}\times\dots\times S_{k}$ and let
$A\subseteq T\times S_{k+1}$ be such that $\forall t\in T\,\left(t,\alpha_{k+1}\right)\in\prescript{\ast}{}{A}$
but $\left(\left(\alpha_{1},\dots,\alpha_{k}\right),\alpha_{k+1}\right)\notin\prescript{\ast}{}{A}$.
Let $B\subseteq S_{k}\times S_{k+1}$ be the set 
\begin{multline*}
B=\{\left(s_{k},s_{k+1}\right)\in S_{k}\times S_{k+1}\mid\forall\left(s_{1},\dots,s_{k-1}\right)\in S_{1}\times\dots\times S_{k-1}\,\\
\left(\left(s_{1},\dots,s_{k}\right),s_{k+1}\right)\in A\}.
\end{multline*}
By construction, $\forall s_{k}\in S_{k}\,\left(s_{k},s_{k+1}\right)\in\prescript{\ast}{}{B}$
hence, as $\left(\alpha_{k},\alpha_{k+1}\right)$ is TT, we have that
$\left(\alpha_{k},\alpha_{k+1}\right)\in\prescript{\ast}{}{B}$, where
\begin{multline*}
\prescript{\ast}{}{B}=\{\left(\eta_{k},\eta_{k+1}\right)\in\prescript{\ast}{}{S_{k}\times S_{k+1}}\mid\forall\left(\sigma_{1},\dots,\sigma_{k-1}\right)\in\prescript{\ast}{}{S_{1}\times\dots\times S_{k-1}}\,\\
\left(\left(\sigma_{1},\dots,\sigma_{k-1},\eta_{k}\right),\eta_{k+1}\right)\in\prescript{\ast}{}{A}\},
\end{multline*}
hence $\left(\left(\alpha_{1},\dots,\alpha_{k}\right),\alpha_{k+1}\right)\in\prescript{\ast}{}{A}$,
which is absurd.

(\ref{enu: tensor k-ple (3)})$\Rightarrow$(\ref{enu: tensor k-ple (4)})
By induction on $k$. If $k=1$ there is nothing to prove. Now let
us assume the claim to hold for $k\geq1$ and let us prove it for
$k+1$. By inductive hypothesis, as (\ref{enu: tensor k-ple (3)})$\Leftrightarrow$(\ref{enu: tensor k-ple (1)}),
$\left(\alpha_{1},\dots,\alpha_{k}\right)$ is a tensor $k$-ple if
and only if $\forall i\leq k-1$ $\left(\alpha_{i},\alpha_{i+1}\right)$
is a tensor pair, so the claim is proven.

(\ref{enu: tensor k-ple (4)})$\Rightarrow$(\ref{enu: tensor k-ple (3)})
This is immediate by induction.
\end{proof}
\begin{example}
If $S_{i}=\N$ for every $i=1,\dots,k$, we get the following equivalence:
if $\forall i\leq k\,\alpha_{i}\in\prescript{\ast}{}{\N}\setminus\N$
then $\left(\alpha_{1},\dots,\alpha_{k+1}\right)$ is a tensor $(k+1)$-ple
if and only if $\alpha<er\left(\alpha_{i+1}\right)$ for every $i\leq k$. 
\end{example}
Notice that, as a trivial corollary of Theorem \ref{thm: tensor kples},
we obtain that the relation of ``being a tensor pair'' is transitive:
\begin{cor}
For every $\left(\alpha_{1},\alpha_{2},\alpha_{3}\right)\in\prescript{\ast}{}{\left(S_{1}\times S_{2}\times S_{3}\right)}$,
if $\left(\alpha_{1},\alpha_{2}\right)$ and $\left(\alpha_{2},\alpha_{3}\right)$
are TT then $\left(\alpha_{1},\alpha_{3}\right)$ is TT.
\end{cor}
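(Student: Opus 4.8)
The plan is to deduce the statement directly from Theorem \ref{thm: tensor kples} together with the behaviour of tensor products under the projection that forgets the middle coordinate. First I would apply the implication (\ref{enu: tensor k-ple (4)})$\Rightarrow$(\ref{enu: tensor k-ple (1)}) of Theorem \ref{thm: tensor kples} with $k=2$: since by hypothesis both $\left(\alpha_{1},\alpha_{2}\right)$ and $\left(\alpha_{2},\alpha_{3}\right)$ are tensor pairs, the triple $\left(\alpha_{1},\alpha_{2},\alpha_{3}\right)$ is a tensor $3$-ple, that is $\mathfrak{U}_{\left(\alpha_{1},\alpha_{2},\alpha_{3}\right)}=\mathfrak{U}_{\alpha_{1}}\otimes\mathfrak{U}_{\alpha_{2}}\otimes\mathfrak{U}_{\alpha_{3}}$.

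Next I would introduce the projection $\pi:S_{1}\times S_{2}\times S_{3}\rightarrow S_{1}\times S_{3}$ given by $\pi\left(s_{1},s_{2},s_{3}\right)=\left(s_{1},s_{3}\right)$. The key computational step is to verify that this projection collapses the middle factor of the tensor product, namely that $\overline{\pi}\left(\U_{1}\otimes\U_{2}\otimes\U_{3}\right)=\U_{1}\otimes\U_{3}$ for all $\U_{i}\in\beta S_{i}$. This is a direct unwinding of the definition: for $C\subseteq S_{1}\times S_{3}$ the membership $\left(s_{1},s_{2},s_{3}\right)\in\pi^{-1}(C)$ depends only on $\left(s_{1},s_{3}\right)$, so the inner condition $\left\{ s_{2}\mid\cdots\right\} \in\U_{2}$ in the definition of $\U_{1}\otimes\U_{2}\otimes\U_{3}$ holds precisely when the condition $\left\{ s_{3}\mid\left(s_{1},s_{3}\right)\in C\right\} \in\U_{3}$ that it quantifies over already holds (the quantified set being either all of $S_{2}$ or empty), which leaves exactly the defining condition of $\U_{1}\otimes\U_{3}$.

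Finally, I would transport the generator through $\pi$ using Theorem \ref{thm:functions and monads}.(2). Since $\left(\alpha_{1},\alpha_{2},\alpha_{3}\right)\in\mu\left(\mathfrak{U}_{\alpha_{1}}\otimes\mathfrak{U}_{\alpha_{2}}\otimes\mathfrak{U}_{\alpha_{3}}\right)$ and $\prescript{\ast}{}{\pi}\left(\alpha_{1},\alpha_{2},\alpha_{3}\right)=\left(\alpha_{1},\alpha_{3}\right)$ by transfer of the coordinate projection, we obtain
\[
\left(\alpha_{1},\alpha_{3}\right)=\prescript{\ast}{}{\pi}\left(\alpha_{1},\alpha_{2},\alpha_{3}\right)\in\prescript{\ast}{}{\pi}\left(\mu\left(\mathfrak{U}_{\alpha_{1}}\otimes\mathfrak{U}_{\alpha_{2}}\otimes\mathfrak{U}_{\alpha_{3}}\right)\right)=\mu\left(\overline{\pi}\left(\mathfrak{U}_{\alpha_{1}}\otimes\mathfrak{U}_{\alpha_{2}}\otimes\mathfrak{U}_{\alpha_{3}}\right)\right)=\mu\left(\mathfrak{U}_{\alpha_{1}}\otimes\mathfrak{U}_{\alpha_{3}}\right).
\]
Since a point lying in the monad of an ultrafilter means precisely that it generates that ultrafilter, this yields $\mathfrak{U}_{\left(\alpha_{1},\alpha_{3}\right)}=\mathfrak{U}_{\alpha_{1}}\otimes\mathfrak{U}_{\alpha_{3}}$, i.e.\ $\left(\alpha_{1},\alpha_{3}\right)$ is a tensor pair.

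I expect the only genuine obstacle to be the projection identity $\overline{\pi}\left(\U_{1}\otimes\U_{2}\otimes\U_{3}\right)=\U_{1}\otimes\U_{3}$; everything else is a bookkeeping application of the cited results. One could alternatively avoid the projection and argue by contrapositive directly from characterization (\ref{enu: Main TT Theo (4)}) of Theorem \ref{thm: main TT theo}, lifting a separating set $C\subseteq S_{1}\times S_{3}$ to $S_{1}\times S_{2}\times S_{3}$ and chaining the two given tensor-pair hypotheses, but the projection route is cleaner and makes transparent why this is a \emph{trivial} corollary of Theorem \ref{thm: tensor kples}.
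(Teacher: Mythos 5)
Your proof is correct, but it is not the route the paper takes, so a comparison is in order. Both arguments open identically, by feeding the two hypotheses into Theorem \ref{thm: tensor kples}; after that they diverge. You stay at the level of ultrafilters: you prove the pushforward identity $\overline{\pi}\left(\U_{1}\otimes\U_{2}\otimes\U_{3}\right)=\U_{1}\otimes\U_{3}$ for the projection $\pi$ that forgets the middle coordinate (your verification is sound: the inner quantified set is either all of $S_{2}$ or empty, and $S_{2}\in\U_{2}$, $\emptyset\notin\U_{2}$), and then transport the generator through $\prescript{\ast}{}{\pi}$ using Theorem \ref{thm:functions and monads}.(2), concluding because a point lying in $\mu(\mathcal{W})$ forces $\mathfrak{U}_{(\alpha_{1},\alpha_{3})}=\mathcal{W}$. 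The paper instead works at the level of sets and argues by contradiction: from Theorem \ref{thm: tensor kples} it extracts that $\left(\left(\alpha_{1},\alpha_{2}\right),\alpha_{3}\right)$ is a tensor pair, supposes $\left(\alpha_{1},\alpha_{3}\right)$ is not TT, picks a separating set $A\subseteq S_{1}\times S_{3}$ with $\left(s_{1},\alpha_{3}\right)\in\prescript{\ast}{}{A}$ for all $s_{1}\in S_{1}$ but $\left(\alpha_{1},\alpha_{3}\right)\notin\prescript{\ast}{}{A}$ (this is characterization (\ref{enu: Main TT Theo (4)}) of Theorem \ref{thm: main TT theo}), lifts it to $B=\pi^{-1}(A)\subseteq S_{1}\times S_{2}\times S_{3}$, and derives the absurdity $\left(\alpha_{1},\alpha_{3}\right)\in\prescript{\ast}{}{A}$. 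In other words, the paper's proof is essentially the ``alternative route'' you sketch in your closing paragraph, routed through the pair $\left(\left(\alpha_{1},\alpha_{2}\right),\alpha_{3}\right)$. The trade-off: your argument is positive and structural, making visible that the corollary is an instance of functoriality (projections collapse tensor products, and monads push forward along extended maps), at the cost of establishing the projection identity; the paper's argument uses fewer ingredients (only the definition of tensor pair unwound on stars of sets, with no pushforward identity and no appeal to Theorem \ref{thm:functions and monads}), but, being a proof by contradiction with an ad hoc lifted set, it leaves that structure implicit.
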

\begin{proof}
As $\left(\alpha_{1},\alpha_{2}\right)$ and $\left(\alpha_{2},\alpha_{3}\right)$
are TT, from Theorem \ref{thm: tensor kples} we deduce that $\left(\left(\alpha_{1},\alpha_{2}\right),\alpha_{3}\right)$
is TT. Let us now assume that $\left(\alpha_{1},\alpha_{3}\right)$
is not TT. Let $A\subseteq S_{1}\times S_{3}$ be such that $\left(s_{1},\alpha_{3}\right)\in\prescript{\ast}{}{A}$
for every $s_{1}\in S_{1}$ but $\left(\alpha_{1},\alpha_{3}\right)\notin\prescript{\ast}{}{A}$.
Let $B\subseteq S_{1}\times S_{2}\times S_{3}$ be defined as follows:
$\left(s_{1},s_{2},s_{3}\right)\in B$ if and only if $\left(s_{1},s_{3}\right)\in A$.
Then $\left(s_{1},s_{2},\alpha_{3}\right)\in\prescript{\ast}{}{B}$
for every $\left(s_{1},s_{2}\right)\in S_{1}\times S_{2}$, and so
(as $\left(\left(\alpha_{1},\alpha_{2}\right),\alpha_{3}\right)$
is TT) we have that $\left(\left(\alpha_{1},\alpha_{2}\right),\alpha_{3}\right)\in\prescript{\ast}{}{B}$,
therefore $\left(\alpha_{1},\alpha_{3}\right)\in\prescript{\ast}{}{A}$,
which is absurd.
\end{proof}
Using this fact, it is possible to add the following equivalent characterization
to Theorem \ref{thm: tensor kples}:
\begin{thm}
Let $k\geq1$, let $S_{1},\dots,S_{k},S_{k+1}$ be given sets and
let $\left(\alpha_{1},\dots,\alpha_{k+1}\right)\in\prescript{\ast}{}{\left(S_{1}\times\dots\times S_{k+1}\right)}$.
The following facts are equivalent: 
\begin{enumerate}
\item $\left(\alpha_{1},\dots,\alpha_{k+1}\right)$ is a tensor $(k+1)$-ple; 
\item $\forall F=\left\{ i_{1}<\dots<i_{l}\right\} \subseteq\left\{ 1,\dots,k+1\right\} $
$\left(\alpha_{i_{1}},\dots,\alpha_{i_{l}}\right)$ is a tensor $l$-ple.
\end{enumerate}
\end{thm}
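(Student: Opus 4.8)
The plan is to prove both implications, with the forward direction being essentially immediate and the reverse direction following from Theorem~\ref{thm: tensor kples}(\ref{enu: tensor k-ple (4)}) together with the transitivity corollary just established.

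For $(1)\Rightarrow(2)$, I would argue that being a tensor $(k+1)$-ple passes to arbitrary subfamilies. The cleanest route is to use the equivalence $(1)\Leftrightarrow(4)$ of Theorem~\ref{thm: tensor kples}: if $\left(\alpha_{1},\dots,\alpha_{k+1}\right)$ is a tensor $(k+1)$-ple, then by that theorem $\left(\alpha_{i},\alpha_{i+1}\right)$ is a tensor pair for every $i\leq k$. Now given any subset $F=\left\{i_{1}<\dots<i_{l}\right\}$, I want to show each consecutive pair $\left(\alpha_{i_{j}},\alpha_{i_{j+1}}\right)$ is a tensor pair, since then $\left(\alpha_{i_{1}},\dots,\alpha_{i_{l}}\right)$ is a tensor $l$-ple by Theorem~\ref{thm: tensor kples}(\ref{enu: tensor k-ple (4)})$\Rightarrow$(\ref{enu: tensor k-ple (1)}) applied to the sets $S_{i_{1}},\dots,S_{i_{l}}$. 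The key observation is that $\left(\alpha_{i_{j}},\alpha_{i_{j+1}}\right)$ is a tensor pair because the chain $\left(\alpha_{i_{j}},\alpha_{i_{j}+1}\right),\left(\alpha_{i_{j}+1},\alpha_{i_{j}+2}\right),\dots,\left(\alpha_{i_{j+1}-1},\alpha_{i_{j+1}}\right)$ consists of tensor pairs, so repeated application of the transitivity Corollary yields that $\left(\alpha_{i_{j}},\alpha_{i_{j+1}}\right)$ is TT.

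For $(2)\Rightarrow(1)$, this is trivial: the full tuple $\left\{1,\dots,k+1\right\}$ is itself one of the admissible subsets $F$, so taking $F=\left\{1,\dots,k+1\right\}$ in hypothesis $(2)$ immediately gives that $\left(\alpha_{1},\dots,\alpha_{k+1}\right)$ is a tensor $(k+1)$-ple.

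The main subtlety I anticipate is in the forward direction, specifically the step where I apply transitivity along a chain of consecutive pairs. The transitivity Corollary is stated for a single triple $\left(\alpha_{1},\alpha_{2},\alpha_{3}\right)$, so to conclude that $\left(\alpha_{i_{j}},\alpha_{i_{j+1}}\right)$ is TT from a chain of length greater than two I must iterate the Corollary, which requires that the relevant intermediate pairs remain tensor pairs at each step — this is guaranteed since all consecutive pairs $\left(\alpha_{i},\alpha_{i+1}\right)$ are TT by hypothesis, so a straightforward induction on the chain length closes the gap. I would also note the harmless identification of products via associativity used implicitly when invoking Theorem~\ref{thm: tensor kples} on the sub-tuple indexed by $F$, exactly as in the associativity remark preceding that theorem.
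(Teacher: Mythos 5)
Your proposal is correct and follows essentially the same route as the paper: the reverse direction by taking $F=\{1,\dots,k+1\}$, and the forward direction by combining the equivalence $(1)\Leftrightarrow(4)$ of Theorem \ref{thm: tensor kples} with iterated applications of the transitivity Corollary along chains of consecutive indices. Your explicit remark that transitivity must be iterated by induction on chain length is a detail the paper leaves implicit, but it is exactly the intended argument.
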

\begin{proof}
The implication $(2)\Rightarrow(1)$ is trivial (just set $F=\left\{ 1,\dots,k+1\right\} $). 

To prove the other implication, by the transitivity of the relation
of being TT we have (using the characterization (\ref{enu: tensor k-ple (4)})
of Theorem \ref{thm: tensor kples}) that for every $i\leq l-1$ $\left(\alpha_{i},\alpha_{i+1}\right)$
is TT. Hence from the equivalence (\ref{enu: tensor k-ple (1)})$\Leftrightarrow$(\ref{enu: tensor k-ple (4)})
in Theorem \ref{thm: tensor kples} we deduce that $\left(\alpha_{i_{1}},\dots,\alpha_{i_{l}}\right)$
is a tensor $l$-ple.
\end{proof}
Finally, by iterating inductively the proof of Theorem \ref{thm:Tensor types in iterated hyperextensions},
we obtain the following result: 
\begin{thm}
Let $S_{1},\dots,S_{k+1}\in\mathbb{V}(X)$ be sets, with $S_{1},\dots,S_{k}$
completely coherent. For every $i\leq k+1$ let $\U_{i}\in\beta S_{i}$
and let $\alpha_{i}\in\mu\left(\U_{i}\right)$. Then $\left(\alpha_{1},\prescript{\ast}{}{\alpha_{2}},\dots,S_{k}\left(\alpha_{k+1}\right)\right)\in\mu\left(\U_{1}\otimes\dots\otimes\U_{k+1}\right)$. 
\end{thm}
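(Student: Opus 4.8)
The plan is to prove, by induction on the number of tensor factors, the sharper statement that the tuple lies in $\mu_{k+1}(\U_{1}\otimes\dots\otimes\U_{k+1})$; since $\mu_{k+1}\subseteq\mu_{\infty}$, this exhibits it as a generator, which is what ``$\mu$'' abbreviates here. The engine throughout is Theorem~\ref{thm:Tensor types in iterated hyperextensions}, fed with the associativity of $\otimes$. The case of a single factor is trivial, as $\alpha_{1}\in\mu(\U_{1})=\mu_{1}(\U_{1})$; the two-factor case is precisely Theorem~\ref{thm:Tensor types in iterated hyperextensions} with $n=m=1$, which gives $\left(\alpha_{1},\prescript{\ast}{}{\alpha_{2}}\right)=\left(\alpha_{1},S_{1}(\alpha_{2})\right)\in\mu_{2}(\U_{1}\otimes\U_{2})$. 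The point to keep in mind is that the level accumulates: each new coordinate $S_{k}(\alpha_{k+1})$ sits one level higher than the previous block, which is why the exponent fed to the base theorem grows.

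For the inductive step I assume the $k$-factor case, so that
\[
\gamma:=\left(\alpha_{1},\prescript{\ast}{}{\alpha_{2}},\dots,S_{k-1}(\alpha_{k})\right)\in\mu_{k}(\U_{1}\otimes\dots\otimes\U_{k}).
\]
Writing $T=S_{1}\times\dots\times S_{k}$ and $\W=\U_{1}\otimes\dots\otimes\U_{k}\in\beta T$, the associativity of the tensor product (under the identification $(S_{1}\times\dots\times S_{k})\times S_{k+1}=S_{1}\times\dots\times S_{k+1}$ recalled before Theorem~\ref{thm: tensor kples}) gives $\U_{1}\otimes\dots\otimes\U_{k+1}=\W\otimes\U_{k+1}$. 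I then apply Theorem~\ref{thm:Tensor types in iterated hyperextensions} to the pair $(\W,\U_{k+1})$ with first generator $\gamma\in\mu_{k}(\W)$ (so $n=k$) and second generator $\alpha_{k+1}\in\mu_{1}(\U_{k+1})$ (so $m=1$), obtaining $\left(\gamma,S_{k}(\alpha_{k+1})\right)\in\mu_{k+1}(\W\otimes\U_{k+1})$. Flattening the nested pair under the product identification turns $\left(\gamma,S_{k}(\alpha_{k+1})\right)$ into $\left(\alpha_{1},\prescript{\ast}{}{\alpha_{2}},\dots,S_{k-1}(\alpha_{k}),S_{k}(\alpha_{k+1})\right)$, closing the induction.

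The only hypothesis of Theorem~\ref{thm:Tensor types in iterated hyperextensions} that is not immediate in this step is that its first factor, here $T$, be completely coherent; this is exactly what justifies the identities $S_{m}(t)=t$ used inside its proof. I would check it through the equivalence established earlier, namely that $T$ is completely coherent if and only if $t=\prescript{\ast}{}{t}$ for every $t\in T$. Given $t=(s_{1},\dots,s_{k})$ with $s_{i}\in S_{i}$, transfer of the tupling operation yields $\prescript{\ast}{}{t}=\left(\prescript{\ast}{}{s_{1}},\dots,\prescript{\ast}{}{s_{k}}\right)$, and complete coherence of each $S_{i}$ for $i\leq k$ gives $\prescript{\ast}{}{s_{i}}=s_{i}$, hence $\prescript{\ast}{}{t}=t$. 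In line with the base theorem, only the first $k$ sets need be coherent: $S_{k+1}$ occupies the role of the second, not necessarily coherent, factor and is left untouched.

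The step I expect to require the most care is the level bookkeeping rather than any conceptual difficulty: one must be sure that the inductive generator $\gamma$ genuinely sits at level $k$ (so that $n=k$ is the correct exponent to feed in, producing $S_{k}$ on the new coordinate and landing in $\mu_{k+1}$), and that the nested pair $(\gamma,S_{k}(\alpha_{k+1}))$ flattens to the stated $(k+1)$-tuple. Beyond verifying the coherence of $T$ and tracking these indices, the argument is a direct iteration of the base theorem together with associativity, so no further obstacle is anticipated.
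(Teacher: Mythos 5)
Your proof is correct, and it is in substance what the paper intends: the paper prints no argument at all for this theorem, saying only that it follows ``by iterating inductively the proof of Theorem \ref{thm:Tensor types in iterated hyperextensions}''. The one place where your implementation genuinely differs is that you iterate the \emph{statement} of that theorem rather than its proof: you apply it as a black box to the pair $\left(\W,\U_{k+1}\right)$ with $\W=\U_{1}\otimes\dots\otimes\U_{k}$ living on the product $T=S_{1}\times\dots\times S_{k}$, and this forces the extra lemma that $T$ is completely coherent. You supply that lemma correctly, via the paper's characterization (complete coherence of $T$ is equivalent to $t=\prescript{\ast}{}{t}$ for all $t\in T$) together with transfer of tupling and complete coherence of each $S_{i}$, $i\leq k$; this is precisely the ingredient the paper's one-line pointer leaves implicit. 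Your level bookkeeping is also right: the inductive generator $\gamma$ sits in $\mu_{k}(\W)$, so the base theorem with $n=k$, $m=1$ produces $S_{k}(\alpha_{k+1})$ on the new coordinate and lands in $\mu_{k+1}(\W\otimes\U_{k+1})$, which flattens to the stated tuple under the usual identification of products and is a fortiori a generator since $\mu_{k+1}\subseteq\mu_{\infty}$. Had one instead re-run the base theorem's transfer computation directly with $k+1$ coordinates, one would only need $S_{m}(s_{i})=s_{i}$ coordinatewise and could skip the product-coherence step; both routes work, and yours has the advantage of reusing the two-factor theorem verbatim, at the modest cost of that one extra verification.
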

As a straightforward corollary we get the following characterization
of tensor $k$-ples in iterated hyperextensions: 
\begin{cor}
Let $S_{1},\dots,S_{k+1}\in\mathbb{V}(X)$ be sets, with $S_{1},\dots,S_{k}$
completely coherent. For every $i\leq k+1$ let $\alpha_{i}\in\prescript{\ast}{}{S_{i}}$.
The following facts are equivalent: 
\begin{enumerate}
\item $\left(\alpha_{1},\dots,\alpha_{k+1}\right)$ is TT; 
\item $\left(\alpha_{1},\dots,\alpha_{k+1}\right)\sim_{S_{1}\times\dots\times S_{k+1}}\left(\alpha_{1},\prescript{\ast}{}{\alpha_{2}},\dots,S_{k}\left(\alpha_{k+1}\right)\right).$ 
\end{enumerate}
\end{cor}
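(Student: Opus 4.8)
The plan is to read the equivalence directly off the Theorem immediately preceding this Corollary, which already exhibits a canonical generator of the tensor product. First I would set $\U_{i}:=\mathfrak{U}_{\alpha_{i}}$, the ultrafilter on $S_{i}$ generated by $\alpha_{i}$; since $A\in\mathfrak{U}_{\alpha_{i}}$ means exactly $\alpha_{i}\in\prescript{\ast}{}{A}$, we have $\alpha_{i}\in\bigcap_{A\in\U_{i}}\prescript{\ast}{}{A}=\mu(\U_{i})$, so the hypotheses of that Theorem are met (the completely coherent sets $S_{1},\dots,S_{k}$ are exactly the ones required). Applying it yields
\[
\left(\alpha_{1},\prescript{\ast}{}{\alpha_{2}},\dots,S_{k}\left(\alpha_{k+1}\right)\right)\in\mu\left(\U_{1}\otimes\dots\otimes\U_{k+1}\right).
\]

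Next I would translate this membership by means of the bridge identification recalled after Theorem \ref{thm: bridge map}: for a set $Y$ and an element $\gamma$ of a hyperextension of $Y$, the fact that $\gamma$ lies in the monad of $\V$ is equivalent to the fact that $\gamma$ generates $\V$. This equivalence extends verbatim to each iterated level, since every $\langle\mathbb{V}(X),\mathbb{V}(X),S_{n}\rangle$ is itself a single superstructure model of nonstandard methods; hence it applies to the mixed-level tuple above, whose generated ultrafilter lives on the base set $S_{1}\times\dots\times S_{k+1}$. Thus the displayed membership is equivalent to the equality
\[
\mathfrak{U}_{\left(\alpha_{1},\prescript{\ast}{}{\alpha_{2}},\dots,S_{k}\left(\alpha_{k+1}\right)\right)}=\U_{1}\otimes\dots\otimes\U_{k+1}=\mathfrak{U}_{\alpha_{1}}\otimes\dots\otimes\mathfrak{U}_{\alpha_{k+1}};
\]
that is, the canonical generator always generates the full tensor product.

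Finally I would match the two conditions against this equality. By the definition of tensor $(k+1)$-ple, condition (1) is the assertion $\mathfrak{U}_{\left(\alpha_{1},\dots,\alpha_{k+1}\right)}=\mathfrak{U}_{\alpha_{1}}\otimes\dots\otimes\mathfrak{U}_{\alpha_{k+1}}$, while by the definition of $\sim_{S_{1}\times\dots\times S_{k+1}}$ condition (2) is the assertion $\mathfrak{U}_{\left(\alpha_{1},\dots,\alpha_{k+1}\right)}=\mathfrak{U}_{\left(\alpha_{1},\prescript{\ast}{}{\alpha_{2}},\dots,S_{k}\left(\alpha_{k+1}\right)\right)}$. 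Since the right-hand sides of these two equalities coincide by the previous display, conditions (1) and (2) are literally the same statement, and the equivalence follows at once in both directions. I do not expect any genuine obstacle here: all of the substance sits in the preceding Theorem (obtained by iterating the argument of Theorem \ref{thm:Tensor types in iterated hyperextensions}), and what remains is only to line up the definitions of generator, tensor $k$-ple and $\sim$. The one point to state with care is that the bridge identification is being used at the iterated level of the canonical generator rather than at level one; this is exactly the pattern already seen in the equivalence (\ref{enu: Main TT Theo (1)})$\Leftrightarrow$(\ref{enu: Main TT Theo (2)}) of Theorem \ref{thm: main TT theo}, of which the present Corollary is the $k$-fold generalization.
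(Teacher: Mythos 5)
Your proposal is correct and is precisely the argument the paper intends: the Corollary is stated as a ``straightforward corollary'' of the preceding Theorem, whose content is exactly that the canonical mixed-level tuple generates $\mathfrak{U}_{\alpha_{1}}\otimes\dots\otimes\mathfrak{U}_{\alpha_{k+1}}$, after which both conditions reduce to the same equality of ultrafilters. Nothing is missing; your remark about applying the generator/monad identification at the iterated level is the right point of care and mirrors the equivalence (1)$\Leftrightarrow$(2) of Theorem \ref{thm: main TT theo}.
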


\section{\label{sec:Combinatorial-properties-of}Combinatorial properties
of monads}

\subsection{Partition regularity of existential formulas}

In all this section we let $Y\in\mathbb{V}(X)$ be a set. We will
be concerned with the notion of ``partition regularity\footnote{In this paper we will always say ``partition regularity'' meaning
what is sometimes called ``weak partition regularity''. We will
not consider the notion of ``strong partition regularity'', see
also \cite{13} for a discussion of the two notions.}'': 
\begin{defn}
A family $\mathfrak{F}\subseteq\wp(Y)$ is partition regular if for
every $k\in\mathbb{N}$, for every partition $X=A_{1}\cup\dots\cup A_{k}$
there exists $i\leq k$ such that $A_{i}\in\mathfrak{F}$. 
\end{defn}
The relationship between partition regular families and ultrafilters
is a well known fact in combinatorial number theory\footnote{A family of subsets of $Y$ is partition regular if and only if it
contains an ultrafilter on $Y$.}; in \cite{17}, this characterization was expressed by means of properties
of monads in the case of families of witnesses of the partition regularity
of Diophantine equations, a field rich in very interesting open problems. 
\begin{thm}
\label{thm: monads and polynomials}Let $P\left(x_{1},\dots,x_{n}\right)\in\mathbb{Z}\left[x_{1},\dots,x_{n}\right].$
Then following are equivalent: 
\begin{enumerate}
\item the equation $P\left(x_{1},\dots,x_{n}\right)=0$ is partition regular
on $\N,$ namely the family 
\[
\mathfrak{F}_{P}=\left\{ A\subseteq\N\mid\exists a_{1},\dots,a_{n}\in A\,P\left(a_{1},\dots,a_{n}\right)=0\right\} 
\]
is partition regular; 
\item there exists an ultrafilter $\U\in\beta\N$ and generators $\alpha_{1},\dots,\alpha_{n}\in\mu(\U)$
such that $P\left(\alpha_{1},\dots,\alpha_{n}\right)=0$. 
\end{enumerate}
\end{thm}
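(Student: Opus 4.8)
The plan is to prove the two implications separately, in each case translating between the combinatorial statement about partition regularity on $\N$ and the nonstandard statement about generators of a common ultrafilter. The key bridge is the standard fact (recalled in the footnote) that a family is partition regular if and only if it contains an ultrafilter, together with the basic correspondence between ultrafilters and their monads developed in Section \ref{sec:Monads}. Throughout I would work in an extension with the $|\wp(\N)|^{+}$-enlarging property, so that by Theorem \ref{thm: bridge map} every monad $\mu(\U)$ is nonempty.

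For the direction $(2)\Rightarrow(1)$, suppose $\U\in\beta\N$ and $\alpha_{1},\dots,\alpha_{n}\in\mu(\U)$ satisfy $P(\alpha_{1},\dots,\alpha_{n})=0$. Given any finite partition $\N=A_{1}\cup\dots\cup A_{k}$, since $\U$ is an ultrafilter exactly one piece, say $A_{j}$, lies in $\U$. Then each $\alpha_{i}\in\mu(\U)\subseteq\prescript{\ast}{}{A_{j}}$, so $(\alpha_{1},\dots,\alpha_{n})\in\prescript{\ast}{}{(A_{j}^{\,n})}$ and $P(\alpha_{1},\dots,\alpha_{n})=0$. The statement ``$\exists x_{1},\dots,x_{n}\in A_{j}$ with $P(x_{1},\dots,x_{n})=0$'' is thus witnessed in the extension, so by downward transfer it holds in $\N$; hence $A_{j}\in\mathfrak{F}_{P}$, establishing partition regularity.

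For the harder direction $(1)\Rightarrow(2)$, the idea is to realize all $n$ witnesses inside a single monad simultaneously. I would consider the family
\[
\mathcal{G}=\left\{ A\subseteq\N\mid\exists a_{1},\dots,a_{n}\in A\ P(a_{1},\dots,a_{n})=0\right\}=\mathfrak{F}_{P},
\]
which by hypothesis is partition regular, hence contains an ultrafilter $\U\in\beta\N$; thus every $A\in\U$ contains a full solution tuple. The task is to upgrade ``every $A\in\U$ has \emph{some} solution inside'' to ``there is one tuple of generators of $\U$ solving $P$''. For this I would set up a finite-intersection / saturation argument: consider the internal sets $\Sigma_{A}=\{(x_{1},\dots,x_{n})\in (\prescript{\ast}{}{A})^{n}\mid \prescript{\ast}{}{P}(x_{1},\dots,x_{n})=0\}$ for $A\in\U$. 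Each is nonempty by transfer, and the family $\{\Sigma_{A}\mid A\in\U\}$ has the finite intersection property because $\U$ is closed under finite intersections; by $|\wp(\N)|^{+}$-saturation there is a tuple $(\alpha_{1},\dots,\alpha_{n})$ in $\bigcap_{A\in\U}\Sigma_{A}$. This tuple satisfies $\prescript{\ast}{}{P}(\alpha_{1},\dots,\alpha_{n})=0$ and each $\alpha_{i}\in\bigcap_{A\in\U}\prescript{\ast}{}{A}=\mu(\U)$, giving generators of the \emph{same} ultrafilter, as required.

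The main obstacle is precisely the synchronization in $(1)\Rightarrow(2)$: partition regularity only guarantees that every large set contains \emph{a} solution, but a priori the solutions in different $A\in\U$ need not share a common ultrafilter, and naively one only obtains $n$ generators of possibly distinct ultrafilters. The saturation argument above resolves this by intersecting over all of $\U$ at once, forcing every coordinate into $\mu(\U)$ simultaneously; the role of the enlarging/saturation hypothesis and of the finite intersection property of $\U$ is therefore essential and is where I would focus the care. A minor point to verify is that the solution set $\Sigma_A$ is genuinely internal (it is, being the transfer of the standard solution set over $A$), so that saturation applies.
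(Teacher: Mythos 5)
Your proof is correct, but it takes a genuinely different route from the paper's. The paper obtains this theorem as the special case $m=0$, $Y=\N$ of Theorem \ref{thm: bridge}, whose proof of the key implication $(1)\Rightarrow(2)$ is purely nonstandard: using the enlarging property one builds a single hyperfinite partition of $\prescript{\ast}{}{\N}$ refining every finite standard partition, transfers partition regularity to it, and then checks that each cell of such a partition is contained in the monad of an ultrafilter; notably, this argument simultaneously reproves the classical fact that a partition regular family contains an ultrafilter, which the paper advertises as a by-product. You instead take that classical fact as a black box to obtain an ultrafilter $\U\subseteq\mathfrak{F}_{P}$, and then run a compactness argument on the solution sets $\Sigma_{A}$ to force all $n$ witnesses into $\mu(\U)$ simultaneously. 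Your second step is exactly in the spirit of the paper's Theorem \ref{thm: nonstandard bridge} (the sets $I_{A}$ there play the role of your $\Sigma_{A}$), and it buys something the paper's route for Theorem \ref{thm: bridge} does not give directly: the generators can be located in the monad of any prescribed ultrafilter contained in $\mathfrak{F}_{P}$, not merely of some ultrafilter produced by the construction. What the paper's route buys, conversely, is self-containedness, since the ultrafilter lemma for partition regular families is derived rather than assumed. One small repair to your write-up: at the outset you assume only the $|\wp(\N)|^{+}$-enlarging property, but later you invoke $|\wp(\N)|^{+}$-saturation. Since each $\Sigma_{A}$ is the star of the standard set $\left\{ \left(x_{1},\dots,x_{n}\right)\in A^{n}\mid P\left(x_{1},\dots,x_{n}\right)=0\right\} $ and your family already has the finite intersection property at the standard level (because $\U\subseteq\mathfrak{F}_{P}$ is closed under finite intersections), the enlarging property suffices; the appeal to saturation is unnecessary and should be stated consistently, but this does not affect the validity of the argument.
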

This characterization has been subsequently used in a series of paper
\cite{09,10,11,12,18,20} to study the partition regularity of several
classes of nonlinear polynomials. In this section we want to show
how this characterization can be extended to study the partition regularity
of several families of subsets of arbitrary sets\footnote{Some results of this section already appeared, in a much weaker form,
in \cite{17}.}.

Let us start with some preliminaries. In all this section, when we
talk about ``formulas'' we mean first order formulas with bounded
quantifiers\footnote{We adopt a slight abuse of language here: the kind of formulas we
work with are those introduced in Definition \ref{def: PR}, which
contain some unbounded quantifiers. However, the notion we are interested
in is that of a set $A\subseteq Y$ witnessing these formulas, and
when we adopt this notion there are no more unbounded quantifiers
to be handled, as every unbounded quantifier $Q_{i}x_{i}$ ($Q_{i}\in\left\{ \forall,\exists\right\} )$
is substituted with $Q_{i}x_{i}\in A$. For this reason, we believe
that this slight abuse of language should not create too much confusion.} in the language of the superstructure $\mathbb{V}(X)$ (see \cite[Chapter 13]{Goldblatt}),
and when we write a formula as $\phi\left(x_{1},...,x_{n}\right)$
we mean that $x_{1},\dots,x_{n}$ are all and only variables appearing
in $\phi$. We say that a formula $\phi\left(x_{1},...,x_{n}\right)$
is totally open if all its variables are free. 
\begin{defn}
\label{def: PR}Let $\phi\left(x_{1},\dots,x_{n},y_{1},\dots,y_{m}\right)$
be a totally open formula, let $S_{1},\dots,S_{m}\in\mathbb{V}(X)$
be sets and, for $i=1,\dots,m$, let $Q_{i}\in\left\{ \exists,\forall\right\} $.
The existential closure of $\phi\left(x_{1},\dots,x_{n},y_{1},\dots,y_{m}\right)$
with constraints $\left\{ Q_{i}y_{i}\in S_{i}\mid i\leq m\right\} $
is the formula 
\begin{multline*}
E_{\overrightarrow{Qy}\in\overrightarrow{S}}\left(\phi\left(x_{1},\dots,x_{n},y_{1},\dots,y_{m}\right)\right):\\
\exists x_{1}\dots\exists x_{n}\,Q_{1}y_{1}\in S_{1}\dots Q_{m}y_{m}\in S_{m}\,\phi\left(x_{1},\dots,x_{n},y_{1},\dots,y_{m}\right).
\end{multline*}
When $m=0$ we will use the notation $E\left(\phi\left(x_{1},\dots,x_{n}\right)\right)$,
and $E\left(\phi\left(x_{1},\dots,x_{n}\right)\right)$ will be called
the existential closure of $\phi\left(x_{1},\dots,x_{n}\right)$.

Similarly, the universal closure of $\phi\left(x_{1},\dots,x_{n},y_{1},\dots,y_{m}\right)$
with constraints $\left\{ Q_{i}y_{i}\in S_{i}\mid i\leq m\right\} $
is the sentence 
\begin{multline*}
U_{\overrightarrow{Qy}\in\overrightarrow{S}}\left(\phi\left(x_{1},\dots,x_{n},y_{1},\dots,y_{m}\right)\right):\\
\forall x_{1}\dots\forall x_{n}\,Q_{1}y_{1}\in S_{1}\dots Q_{m}y_{m}\in S_{m}\,\phi\left(x_{1},\dots,x_{n},y_{1},\dots,y_{m}\right).
\end{multline*}
When $m=0$ we will use the notation $U\left(\phi\left(x_{1},\dots,x_{n}\right)\right)$,
and $U\left(\phi\left(x_{1},\dots,x_{n}\right)\right)$ will be called
the universal closure of $\phi\left(x_{1},\dots,x_{n}\right)$.

Given a totally open formula $\phi\left(x_{1},\dots,x_{n},y_{1},\dots,y_{m}\right)$,
a set of constraints $\left\{ Q_{i}y_{i}\in S_{i}\mid i\leq m\right\} $
and a set $A\subseteq Y$, $E_{\overrightarrow{Qy}\in\overrightarrow{S}}\left(\phi\left(x_{1},\dots,x_{n},y_{1},\dots,y_{m}\right)\right)$
is said to be modeled by $A$ (notation: $A\models E_{\overrightarrow{Qy}\in\overrightarrow{S}}\left(\phi\left(x_{1},\dots,x_{n},y_{1},\dots,y_{m}\right)\right)$)
if the formula 
\[
\exists x_{1}\in A\dots\exists x_{n}\in A\,Q_{1}y_{1}\in S_{1}\dots Q_{m}y_{m}\in S_{m}\,\phi\left(x_{1},\dots,x_{n},y_{1},\dots,y_{m}\right)
\]
holds true. Similarly, we say that $A$ models $U_{\overrightarrow{Qy}\in\overrightarrow{S}}\left(\phi\left(x_{1},\dots,x_{n},y_{1},\dots,y_{m}\right)\right)$
(notation: $A\models U_{\overrightarrow{Qy}\in\overrightarrow{S}}\left(\phi\left(x_{1},\dots,x_{n},y_{1},\dots,y_{m}\right)\right)$)
if the formula 
\[
\forall x_{1}\in A\dots\forall x_{n}\in A\,Q_{1}y_{1}\in S_{1}\dots Q_{m}y_{m}\in S_{m}\,\phi\left(x_{1},\dots,x_{n}\right)
\]
holds true.

$E_{\overrightarrow{y}\in\overrightarrow{S}}\left(\phi\left(x_{1},\dots,x_{n},y_{1},\dots,y_{m}\right)\right)$
(resp. $U_{\overrightarrow{y}\in\overrightarrow{S}}\left(\phi\left(x_{1},\dots,x_{n},y_{1},\dots,y_{m}\right)\right)$)
is said to be partition regular on $Y$ if for every $k\in\mathbb{N}$,
for every partition $Y=A_{1}\cup\dots\cup A_{k}$ there exists $i\leq k$
such that $A_{i}\models E_{\overrightarrow{y}\in\overrightarrow{S}}\left(\phi\left(x_{1},\dots,x_{n},y_{1},\dots,y_{m}\right)\right)$
(resp. $A_{i}\models U_{\overrightarrow{y}\in\overrightarrow{S}}\left(\phi\left(x_{1},\dots,x_{n},y_{1},\dots,y_{m}\right)\right)$). 
\end{defn}
Our main result in this Section is the following Theorem, which generalizes
Theorem \ref{thm: monads and polynomials} to arbitrary existential
formulas and sets with constraints\footnote{We have included in this Theorem also the known characterization of
partition regular families in terms of ultrafilters, providing a new
rather simple nonstandard proof that uses monads.}: 
\begin{thm}
\label{thm: bridge}Let $\phi\left(x_{1},\dots,x_{n},y_{1},\dots,y_{m}\right)$
be a totally open formula and, for $i=1,\dots,m$, let $S_{i}\in\mathbb{V}(X)$
and $Q_{i}\in\left\{ \exists,\forall\right\} $. Let $Y\in\mathbb{V}(X)$.
The following are equivalent: 
\begin{enumerate}
\item \label{enu: pr 1}$E_{\overrightarrow{Qy}\in\overrightarrow{S}}\left(\phi\left(x_{1},\dots,x_{n},y_{1},\dots,y_{m}\right)\right)$
is partition regular on $Y$; 
\item \label{enu: pr 2}$\exists\alpha_{1}\sim_{Y}\dots\sim_{Y}\alpha_{n}\in\prescript{\ast}{}{Y}$
such that the sentence $Q_{1}y_{1}\in\prescript{\ast}{}{S_{1}},\dots,Q_{m}y_{m}\in\prescript{\ast}{}{S_{m}}$
$\prescript{\ast}{}{\phi}\left(\alpha_{1},\dots,\alpha_{n},y_{1},\dots,y_{m}\right)$
holds true; 
\item \label{enu: pr 3}there exists a ultrafilter $\U\in\beta X$ such
that for every set $A\in\U$ we have that 
\[
A\models E_{\overrightarrow{Qy}\in\overrightarrow{S}}\left(\phi\left(x_{1},\dots,x_{n},y_{1},\dots,y_{m}\right)\right).
\]
\end{enumerate}
\end{thm}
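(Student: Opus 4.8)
The plan is to prove the cyclic chain of implications $(\ref{enu: pr 1})\Rightarrow(\ref{enu: pr 2})\Rightarrow(\ref{enu: pr 3})\Rightarrow(\ref{enu: pr 1})$, since this is the most economical way to establish a three-way equivalence. The conceptual heart of the argument is the nonstandard bridge already established in Section \ref{sec:Monads}: every ultrafilter $\U$ on $Y$ is realized as the common generator-ultrafilter of elements of its monad $\mu(\U)$, and ``all sets of $\U$ satisfy a property'' translates into ``some/any generator in $\mu(\U)$ witnesses the $\ast$-transferred property.'' The constraints $Q_i y_i \in S_i$ ride along untouched because, once we transfer, the bounded quantifiers $Q_i y_i \in S_i$ simply become $Q_i y_i \in \prescript{\ast}{}{S_i}$ by transfer, so the whole formula $\phi$ can be treated as a black box via $\prescript{\ast}{}{\phi}$.

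\textbf{Implication $(\ref{enu: pr 1})\Rightarrow(\ref{enu: pr 2})$.} I would argue this using an enlarging/saturation argument to produce generators simultaneously witnessing the formula. The key idea: partition regularity of $E_{\overrightarrow{Qy}\in\overrightarrow{S}}(\phi)$ means the family
\[
\mathfrak{F}=\{A\subseteq Y\mid A\models E_{\overrightarrow{Qy}\in\overrightarrow{S}}(\phi)\}
\]
is partition regular on $Y$, hence by the standard fact recalled in the footnote it contains an ultrafilter $\U\in\beta Y$. Now I want generators $\alpha_1\sim_Y\dots\sim_Y\alpha_n\in\mu(\U)$ such that, after transfer, the sentence $Q_1 y_1\in\prescript{\ast}{}{S_1}\dots Q_m y_m\in\prescript{\ast}{}{S_m}\,\prescript{\ast}{}{\phi}(\alpha_1,\dots,\alpha_n,\overrightarrow{y})$ holds. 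For each $A\in\U$, since $A\models E(\phi)$ there exist witnesses $a_1^A,\dots,a_n^A\in A$ making the constrained formula true; the family of conditions ``$x_i\in\prescript{\ast}{}{A}$ for all $i$, and $\prescript{\ast}{}{\phi}$ holds with constraints'' has the finite intersection property over $A\in\U$ (intersecting finitely many $A$'s and using that witnesses in the intersection still work), so the $|\wp(Y)|^+$-enlarging property yields a simultaneous realization $(\alpha_1,\dots,\alpha_n)$ in $\mu(\U)^n$. The requirement $\alpha_1\sim_Y\dots\sim_Y\alpha_n$ is exactly the statement that all $\alpha_i\in\mu(\U)$ generate the same $\U$, which holds by construction.

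\textbf{Implications $(\ref{enu: pr 2})\Rightarrow(\ref{enu: pr 3})$ and $(\ref{enu: pr 3})\Rightarrow(\ref{enu: pr 1})$.} For the first, given generators $\alpha_1\sim_Y\dots\sim_Y\alpha_n$, let $\U=\mathfrak{U}_{\alpha_1}=\dots=\mathfrak{U}_{\alpha_n}$ be their common generated ultrafilter on $Y$. For any $A\in\U$ we have $\alpha_i\in\prescript{\ast}{}{A}$ for every $i$; combining this membership with the hypothesis that $\prescript{\ast}{}{\phi}(\alpha_1,\dots,\alpha_n,\overrightarrow{y})$ holds under the transferred constraints, and then applying transfer \emph{downward} (reverse transfer / the standard-definability of the witnessing statement), I obtain that $A\models E_{\overrightarrow{Qy}\in\overrightarrow{S}}(\phi)$. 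The last implication $(\ref{enu: pr 3})\Rightarrow(\ref{enu: pr 1})$ is the easy direction: if some $\U$ has every member satisfying $E(\phi)$, then for any finite partition $Y=A_1\cup\dots\cup A_k$ exactly one piece $A_i$ lies in $\U$, whence $A_i\models E(\phi)$, giving partition regularity directly.

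\textbf{The main obstacle} I anticipate is the careful handling of the downward/reverse transfer in $(\ref{enu: pr 2})\Rightarrow(\ref{enu: pr 3})$ together with the bounded quantifier constraints: one must verify that the statement ``there exist $x_1,\dots,x_n\in A$ with $Q_1 y_1\in S_1\dots Q_m y_m\in S_m\,\phi$'' is a genuine internal (indeed standard) statement whose $\ast$-transform is precisely the nonstandard sentence in (\ref{enu: pr 2}) specialized to $\alpha_i\in\prescript{\ast}{}{A}$. The subtlety is that the constraints $Q_i y_i\in S_i$ may be universal, so I cannot simply extract standard witnesses for the $y_i$; instead I must phrase the whole constrained matrix as a single formula and invoke transfer as a biconditional, using $\alpha_i\in\prescript{\ast}{}{A}\Leftrightarrow A\in\mathfrak{U}_{\alpha_i}$ as the pivot. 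Provided the formulas are bounded (as stipulated at the start of the section), transfer applies cleanly and this obstacle is purely bookkeeping rather than a genuine difficulty.
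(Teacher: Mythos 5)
Your implications (\ref{enu: pr 2})$\Rightarrow$(\ref{enu: pr 3}) and (\ref{enu: pr 3})$\Rightarrow$(\ref{enu: pr 1}) coincide with the paper's: downward transfer of the existential sentence using $\mu(\U)\subseteq\prescript{\ast}{}{A}$, and the trivial partition argument. Where you genuinely diverge is (\ref{enu: pr 1})$\Rightarrow$(\ref{enu: pr 2}). The paper gives a self-contained nonstandard argument: by enlarging it produces a hyperfinite partition $\prescript{\ast}{}{Y}=A_{1}\cup\dots\cup A_{\lambda}$ refining all finite partitions of $Y$, transfers partition regularity to $\prescript{\ast}{}{Y}$ to obtain witnesses $\alpha_{1},\dots,\alpha_{n}$ inside a single cell $A_{i}$, and then observes that every cell of such a refining partition is contained in a monad, so the witnesses are automatically $\sim_{Y}$-equivalent. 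You instead first invoke the classical combinatorial fact that a partition regular family contains an ultrafilter --- which is literally statement (\ref{enu: pr 3}) --- and then recover (\ref{enu: pr 2}) by a finite-intersection argument inside $\mu(\U)$. Your route is correct: the family $\mathfrak{F}$ is upward closed (the closure is existential in the $x_{i}$), so the classical fact applies, and your enlarging step works provided you make explicit that the conditions being intersected are the star-extensions of the \emph{standard} witness sets $J_{A}=\left\{ \left(a_{1},\dots,a_{n}\right)\in A^{n}\mid Q_{1}y_{1}\in S_{1}\dots Q_{m}y_{m}\in S_{m}\,\phi\right\}$; the enlarging property concerns families of standard sets, and for genuinely internal families you would need saturation instead, as in Theorem \ref{thm: nonstandard bridge}. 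The trade-off is worth noting: your proof is shorter, but it consumes the classical ultrafilter characterization of partition regularity as an input, whereas the paper deliberately avoids it --- its footnote advertises that this very theorem supplies a new nonstandard proof of that characterization, a purpose your argument would forfeit (your proof remains logically sound, since the classical fact has independent proofs, but it can no longer serve as a re-proof of it).
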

\begin{proof}
(\ref{enu: pr 1})$\Rightarrow$(\ref{enu: pr 2}) First, let us fix
a notation. Let $Par(Y)$ be the set of all possible finite partitions
of $Y$. Given partitions $P_{1}\left(Y\right)=A_{1,1}\cup\dots\cup A_{1,k_{1}},\dots$,
$P_{m}\left(Y\right)=A_{m,1}\cup\dots\cup A_{m,k_{m}}$, we let $P\left(P_{1},\dots,P_{m}\right)$
be the partition generated by $P_{1},\dots,P_{m}$, namely the partition
\[
Y=\bigcup_{\left(i_{1},\dots,i_{m}\right)\in K}\bigcap_{1\leq l\leq m}A_{l,i_{l}},
\]
where $K=\left[1,k_{1}\right]\times\dots\times\left[1,k_{m}\right].$
Now, for every partition $P(Y)=A_{1}\cup\dots\cup A_{m}$ let $I_{P(Y)}$
be the set of all partitions refining $P(Y)$, namely\footnote{As usual, we are identifying partitions and functions with finite
image.} 
\[
I_{P(Y)}=\left\{ f:Y\rightarrow[1,k]\mid k\in\mathbb{N},\forall i\leq k\,\exists!j\leq m\,\text{such that}\,f^{-1}(j)\subseteq A_{i}\right\} .
\]
The family $\left\{ I_{P(Y)}\right\} _{P\in Par(Y)}$ has the FIP,
since $I_{P_{1}(Y)}\cap\dots\cap I_{P_{m}(Y)}\supseteq I_{P(P_{1}\dots,P_{m})}$.
By enlarging, there exists a hyperfinite partition $\prescript{\ast}{}{Y}=A_{1}\cup\dots\cup A_{\lambda}$
that refines all finite partitions of $Y$. As $E_{\overrightarrow{Qy}\in\overrightarrow{S}}\left(\phi\left(x_{1},\dots,x_{n},y_{1},\dots,y_{m}\right)\right)$
is partition regular on $X$, by transfer $\prescript{\ast}{}{E_{\overrightarrow{Qy}\in\overrightarrow{S}}}\left(\phi\left(x_{1},\dots,x_{n},y_{1},\dots,y_{m}\right)\right)$
is partition regular on $\prescript{\ast}{}{Y}$, hence there exists
$i\leq\lambda$, $\alpha_{1},\dots,\alpha_{n}\in A_{i}$, such that
$Q_{1}\beta_{1}\in\prescript{\ast}{}{S_{1}},\dots,Q_{m}\beta_{m}\in\prescript{\ast}{}{S_{m}}$
$\prescript{\ast}{}{\phi}\left(\alpha_{1},\dots,\alpha_{n},\beta_{1},\dots,\beta_{m}\right)$
holds true. To conclude the proof, we show that $\alpha_{1}\sim_{Y}\dots\sim_{Y}\alpha_{n}$.
In fact, as $A_{1}\cup\dots\cup A_{\lambda}$ refines all finite partitions
on $Y$, for every $i\leq\lambda$ it is straightforward to show that
the set 
\[
U_{i}=\left\{ A\subseteq Y\mid A_{i}\subseteq\prescript{\ast}{}{A}\right\} 
\]
is an ultrafilter, and so $A_{i}\subseteq\bigcap_{A\in F_{i}}\prescript{\ast}{}{A}=\mu\left(U_{i}\right)$,
hence $\alpha_{1},\dots,\alpha_{n}\in\mu\left(U_{i}\right)$ are all
$\sim_{Y}$-equivalent.

(\ref{enu: pr 2})$\Rightarrow$(\ref{enu: pr 3}) Let $\U$ be the
ultrafilter generated by $\alpha_{1},\dots,\alpha_{n}$. Let $A\in\U$.
By hypothesis, $\mu(\U)\models\prescript{\ast}{}{E_{\overrightarrow{Qy}\in\overrightarrow{S}}\left(\phi\left(x_{1},\dots,x_{n},y_{1},\dots,y_{m}\right)\right)}$
and, since the formula $E_{\overrightarrow{Qy}\in\overrightarrow{S}}\left(\phi\left(x_{1},\dots,x_{n},y_{1},\dots,y_{m}\right)\right)$
is existential in $\overrightarrow{x}=\left(x_{1},\dots,x_{n}\right)$
and $\mu(\U)\subseteq\prescript{\ast}{}{A}$, this entails that $\prescript{\ast}{}{A}\models\prescript{\ast}{}{E_{\overrightarrow{Qy}\in\overrightarrow{S}}\left(\phi\left(x_{1},\dots,x_{n},y_{1},\dots,y_{m}\right)\right)}$,
so we can conclude by transfer.

(\ref{enu: pr 3})$\Rightarrow$(\ref{enu: pr 1}) This is straightforward
from the definitions, as for every finite partition $Y=A_{1}\cup\dots\cup A_{k}$
there exists $i\leq k$ such that $A_{i}\in\U$. 
\end{proof}
\begin{rem}
The characterization of partition regular Diophantine equations is
a particular case of the previous Theorem, where we let $m=0,\,Y=\N$
and, given a polynomial $P\left(x_{1},\dots,x_{n}\right)$, $\phi\left(x_{1},\dots,x_{n}\right)$
is the formula $P\left(x_{1},\dots,x_{n}\right)=0$. 
\end{rem}
\begin{defn}
If $\phi$ is a partition regular formula, every ultrafilter $\U$
such that $\forall A\in\U\,A\models\phi$ will be called a $\phi$-ultrafilter
(in this case, we will also say that $\U$ witnesses $\phi$). We
will write $\U\models\phi$ to mean that $\U$ is a $\phi$ ultrafilter. 
\end{defn}
In particular, the proof of Theorem \ref{thm: bridge} shows that,
for any ultrafilter $\U\in\beta Y$, $\U\models E_{\overrightarrow{Qy}\in\overrightarrow{S}}\left(\phi\left(x_{1},\dots,x_{n},y_{1},\dots,y_{m}\right)\right)$
if and only $\exists\alpha_{1},\dots,\alpha_{n}\in\mu(\U)$ such that
$Q_{1}\beta_{1}\in\prescript{\ast}{}{S_{1}},\dots,Q_{m}\beta_{m}\in\prescript{\ast}{}{S_{m}}$
$\prescript{\ast}{}{\phi}\left(\alpha_{1},\dots,\alpha_{n},\beta_{1},\dots,\beta_{m}\right)$
holds true. 
\begin{example}
\label{exa: homogeneous}(This example appears, in the weaker form
$m=0$, also in \cite[ Theorem 4.2]{21}.) Let $S$ be a semigroup,
and let $\phi\left(x_{1},\dots,x_{n},y_{1},\dots,y_{m}\right)$ be
an homogeneous totally open formula with constraints $Q_{1}R_{1},\dots,Q_{m}R_{m}$,
in the sense that $\forall s_{1},\dots,s_{n},t\in S$ if $Q_{1}r_{1}\in R_{1}\dots Q_{m}r_{m}\in R_{m}\,\phi\left(s_{1},\dots,s_{n},r_{1},\dots,r_{m}\right)$
holds true then $Q_{1}\widetilde{r_{1}}\in R_{1}\dots Q_{m}\widetilde{r_{m}}\in R_{m}\phi\left(t\cdot s_{1},\dots,t\cdot s_{n},r_{1},\dots,r_{m}\right)$
holds true. Then 
\[
I_{\phi}=\left\{ \U\in\beta S\mid\U\models E_{\overrightarrow{Qy}\in\overrightarrow{R}}\left(\phi\left(x_{1},\dots,x_{n},y_{1},\dots,y_{m}\right)\right)\right\} 
\]
is a closed bilateral ideal in $\beta S$. Closure is trivial; now
let $\U\in I_{\phi}$ and $\V\in\beta S$. Let $\alpha_{1}\sim_{S}\dots\sim_{S}\alpha_{n}\in\mu(\U)$
be such that $Q_{1}y_{1}\in\prescript{\ast}{}{R_{1}}\dots Q_{m}y_{m}\in\prescript{\ast}{}{R_{m}}\,\prescript{\ast}{}{\phi}\left(\alpha_{1},\dots,\alpha_{n},y_{1},\dots,y_{m}\right)$
holds, and let $\beta\in\mu(\V)$. Then: 
\begin{itemize}
\item $\U\odot\V\in I_{\phi}$ as, by Corollary \ref{cor:operations semigroups},
$\alpha_{i}\cdot\prescript{\ast}{}{\beta}\in\mu(\U\odot\V)$ for every
$i\leq n$, and $Q_{1}y_{1}\in\prescript{\ast}{}{R_{1}}\dots Q_{m}y_{m}\in\prescript{\ast}{}{R_{m}}\,\prescript{\ast}{}{\phi}\left(\alpha_{1},\dots,\alpha_{n},y_{1},\dots,y_{m}\right)$
holds as $\prescript{\ast}{}{\phi}\left(\alpha_{1},\dots,\alpha_{n},y_{1},\dots,y_{m}\right)$
holds and $\phi$ is homogeneous; 
\item similarly, $\V\odot\U\in I_{\phi}$ as, by Corollary \ref{cor:operations semigroups},
$\beta\cdot\prescript{\ast}{}{\alpha}\in\mu(\V\odot\U)$ for every
$i\leq n$, and $Q_{1}y_{1}\in\prescript{\ast}{}{R_{1}}\dots Q_{m}y_{m}\in\prescript{\ast}{}{R_{m}}\,\prescript{\ast}{}{\phi}\left(\beta\cdot\prescript{\ast}{}{\alpha_{1}},\dots,\beta\cdot\prescript{\ast}{}{\alpha_{n}},y_{1},\dots,y_{m}\right)$
holds as $\prescript{\ast}{}{\phi}\left(\prescript{\ast}{}{\alpha_{1}},\dots,\prescript{\ast}{}{\alpha_{n}},y_{1},\dots,y_{m}\right)$
holds and $\phi$ is homogeneous. 
\end{itemize}
\end{example}
\begin{example}
In \cite{Khalfalah}, A.~Khalfalah and E.~Szemerèdi proved that,
for every polynomial $P(y)$ such that $2\mid P(y)$ for some $y\in\mathbb{Z}$,
the formula $\exists x_{1},x_{2},\exists y\in\mathbb{Z}\,x_{1}+x_{2}=P(y)$
is partition regular\footnote{However, as a consequence of \cite[Theorem 3.10]{11}, if we drop
the constraint $y\in\mathbb{Z}$, the formula $\exists x_{1},x_{2},y\,\,\,x_{1}+x_{2}=P(y)$
is not partition regular on $\mathbb{Z}$.} on $\mathbb{N}$. By Theorem \ref{thm: bridge}, there exists $\alpha_{1}\sim_{\mathbb{Z}}\alpha_{2}$
and $\beta\in\prescript{\ast}{}{\mathbb{Z}}$ such that $\alpha_{1}+\alpha_{2}=P(\beta)$.
Similarly, in \cite{Franzikinatikis} N.~Frantzikinakis and B.~Host
proved the partition regularity of the formulas $\exists x_{1},x_{2}\exists y_{1}\in\mathbb{Z}\,16x_{1}^{2}+9x_{2}^{2}=y_{1}^{2}$
and $\exists x_{1},x_{2}\exists y_{1}\in\mathbb{Z}\,x_{1}^{2}-x_{1}x_{2}+x_{2}^{2}=y_{1}^{2}$.
Once again, by Theorem \ref{thm: bridge}, there exists $\alpha_{1}\sim_{\mathbb{Z}}\alpha_{2}$
and $\beta\in\prescript{\ast}{}{\mathbb{Z}}$ such that $16\alpha_{1}^{2}+9\alpha_{2}^{2}=\beta_{1}^{2}$
and there exists $\eta_{1}\sim_{\mathbb{Z}}\eta_{2}$ and $\mu_{1}\in\prescript{\ast}{}{\mathbb{Z}}$
such that $\eta_{1}^{2}-\eta_{1}\eta_{2}+\eta_{2}^{2}=\mu_{1}^{2}$.
Notice that both these formulas are homogeneous, hence by Example
\ref{exa: homogeneous} we get that the sets of ultrafilters witnessing
them are closed bilateral ideals in $\left(\beta\N,\odot\right)$
(hence, in particular, any ultrafilter in the minimal bilateral ideal
$\overline{K\left(\beta\N,\odot\right)}$ witnesses both of them). 
\end{example}
\begin{example}
In \cite{Moreira}, J.~Moreira solved a long standing open problem,
proving the partition regularity on $\N$ of the formula\footnote{If we drop the constraint $y\in\mathbb{N}$, the problem of the partition
regularity of the formula $\exists x_{1},x_{2},x_{3},x_{4}\,\left(x_{1}+x_{4}=x_{2}\right)\wedge\left(x_{1}\cdot x_{4}=x_{3}\right)$
is still open.} $\exists x_{1},x_{2},x_{3}\exists y\in\mathbb{N\,}\left(x_{1}+y=x_{2}\right)\wedge\left(x_{1}\cdot y=x_{3}\right)$.
By Theorem \ref{thm: bridge}, this entails the existence of an ultrafilter
$\U\in\beta\N$ and $\alpha_{1},\alpha_{2},\alpha_{3}\in\mu(\U),\beta\in\prescript{\ast}{}{\N}$
such that $\alpha_{1}+\beta=\alpha_{2}$ and $\alpha_{1}\cdot\beta=\alpha_{3}$. 
\end{example}
In most cases, however, one is interested in full partition regularity,
namely in the case of Definition \ref{def: PR} where $m=0$. 
\begin{example}
A very well-know fact in combinatorial number theory is that every
idempotent ultrafilter is a Schur ultrafilter, namely it witnesses
the partition regularity of the formula $\exists x,y,z\,x+y=z$ (see
\cite{schur} for the original combinatorial proof of this result,
and \cite{13} for the ultrafilters version). This fact can be seen
directly also as a consequence of Theorem \ref{thm: bridge}. In fact,
let $\U$ be idempotent and let $\alpha\in\mu(\U)$. Then $\prescript{\ast}{}{\alpha}\in\mu_{2}(\U)$
and $\alpha+\prescript{\ast}{}{\alpha}\in\mu_{3}(\U\oplus\U)=\mu_{3}(\U)$
by Corollary \ref{cor:operations semigroups}, hence letting $\alpha_{1}=\alpha,\alpha_{2}=\prescript{\ast}{}{\alpha}$
and $\alpha_{3}=\alpha+\prescript{\ast}{}{\alpha}$ we get the thesis. 
\end{example}
The characterization of partition regularity by means of ultrafilters
allows to use a iterative process to produce new partition regular
formulas. The following is a generalization of \cite[Lemma 2.1]{11},
where this result was framed and proven restricting to the context
of partition regular equations: 
\begin{thm}
\label{thm: iterativity}Let $\phi\left(x,y_{1},\dots,y_{n}\right)$
be a totally open formula, let $S_{1},\dots,S_{m}\in\mathbb{V}(X)$
be sets and, for $i=1,\dots,m$, let $Q_{i}\in\left\{ \exists,\forall\right\} $.
Assume that 
\[
\exists x\,Q_{1}y_{1}\in S_{1}\dots Q_{n}y_{n}\in S_{n}\,\phi\left(x,y_{1},\dots,y_{n}\right)
\]
is a partition regular formula, and that $\U\in\beta Y$ is one of
its witnesses. Then for every set $A\in\U$ the set 
\[
I_{A}(\phi):=\left\{ a\in A\mid Q_{1}y_{1}\in S_{1}\dots Q_{n}y_{n}\in S_{n}\phi\left(a,y_{1},\dots,y_{n}\right)\,\text{holds true}\right\} \in\U.
\]
Moreover, let $\psi\left(x,z_{1}\dots,z_{m}\right)$ be another totally
open formula, let $R_{1},\dots,R_{m}\in\mathbb{V}(X)$ be sets and,
for $i=1,\dots,m$, let $\widetilde{Q_{i}}\in\left\{ \exists,\forall\right\} $.
Assume that $\U$ witnesses also the partition regularity of $\exists x\widetilde{Q}_{1}z_{1}\in R_{1}\dots\widetilde{Q}_{m}z_{m}\in R_{m}\,\psi\left(x,z_{1},\dots,z_{n}\right)$.
Then $\U$ witnesses the formula 
\begin{multline*}
\exists x\,Q_{1}y_{1}\in S_{1}\dots Q_{n}y_{n}\in S_{n}\widetilde{Q_{1}}z_{1}\in R_{1}\widetilde{Q_{m}}z_{m}\in R_{m}\\
\phi\left(x,y_{1},\dots,y_{n}\right)\wedge\psi\left(x,z_{1},\dots,z_{m}\right),
\end{multline*}
which is then partition regular. 
\end{thm}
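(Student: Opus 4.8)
The plan is to run everything through the monad characterization of witnessing recorded right after Theorem \ref{thm: bridge}: for $\U\in\beta Y$, the ultrafilter $\U$ witnesses an existentially closed formula $E_{\overrightarrow{Qy}\in\overrightarrow{S}}(\phi)$ if and only if some generator $\alpha\in\mu(\U)$ satisfies the corresponding $\ast$-transferred constrained matrix. First I would isolate the ``large'' set
\[
I_{Y}(\phi)=\left\{ a\in Y\mid Q_{1}y_{1}\in S_{1}\dots Q_{n}y_{n}\in S_{n}\,\phi\left(a,y_{1},\dots,y_{n}\right)\,\text{holds}\right\},
\]
so that by definition $I_{A}(\phi)=A\cap I_{Y}(\phi)$ for every $A\subseteq Y$. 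Hence the first assertion reduces to proving $I_{Y}(\phi)\in\U$: once this is known, $I_{A}(\phi)=A\cap I_{Y}(\phi)\in\U$ whenever $A\in\U$, since $\U$ is a filter. To see $I_{Y}(\phi)\in\U$ I would use the ultrafilter dichotomy: if $I_{Y}(\phi)\notin\U$ then $Y\setminus I_{Y}(\phi)\in\U$, yet by construction no element $a$ of $Y\setminus I_{Y}(\phi)$ satisfies $Q_{1}y_{1}\in S_{1}\dots Q_{n}y_{n}\in S_{n}\,\phi(a,\dots)$, so $Y\setminus I_{Y}(\phi)\not\models E_{\overrightarrow{Qy}\in\overrightarrow{S}}(\phi)$, contradicting that $\U$ witnesses this formula. (Equivalently, picking $\alpha\in\mu(\U)$ witnessing $E_{\overrightarrow{Qy}\in\overrightarrow{S}}(\phi)$, transfer gives $\alpha\in\prescript{\ast}{}{I_{Y}(\phi)}$, and since $\U=\mathfrak{U}_{\alpha}$ this reads $I_{Y}(\phi)\in\U$.)

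For the ``moreover'' part I would apply the first claim to both formulas. Put $A_{\phi}=I_{Y}(\phi)$ and, analogously, $A_{\psi}=\{a\in Y\mid\widetilde{Q}_{1}z_{1}\in R_{1}\dots\widetilde{Q}_{m}z_{m}\in R_{m}\,\psi(a,z_{1},\dots,z_{m})\,\text{holds}\}$; both lie in $\U$ by the first claim applied to $\phi$ and to $\psi$. Since $\U$ is closed under intersection, $A_{\phi}\cap A_{\psi}\in\U$, so any generator $\alpha\in\mu(\U)$ (one exists because $\U$ already witnesses $E(\phi)$) satisfies $\alpha\in\prescript{\ast}{}{\left(A_{\phi}\cap A_{\psi}\right)}=\prescript{\ast}{}{A_{\phi}}\cap\prescript{\ast}{}{A_{\psi}}$. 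By transfer, $\alpha\in\prescript{\ast}{}{A_{\phi}}$ says exactly that $Q_{1}y_{1}\in\prescript{\ast}{}{S_{1}}\dots Q_{n}y_{n}\in\prescript{\ast}{}{S_{n}}\,\prescript{\ast}{}{\phi}(\alpha,\vec{y})$ holds, and $\alpha\in\prescript{\ast}{}{A_{\psi}}$ gives the analogous $\psi$-block. Merging the two blocks (next paragraph) shows that this single $\alpha\in\mu(\U)$ satisfies the $\ast$-matrix of the combined formula, so by the characterization $\U$ witnesses it; partition regularity of the combined formula then follows from the implication (\ref{enu: pr 3})$\Rightarrow$(\ref{enu: pr 1}) of Theorem \ref{thm: bridge}.

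The one step needing genuine care is this quantifier merging: I must check that the conjunction of the two separately quantified sentences is equivalent to the single sentence with prefix $Q_{1}y_{1}\in\prescript{\ast}{}{S_{1}}\dots Q_{n}y_{n}\in\prescript{\ast}{}{S_{n}}\widetilde{Q}_{1}z_{1}\in\prescript{\ast}{}{R_{1}}\dots\widetilde{Q}_{m}z_{m}\in\prescript{\ast}{}{R_{m}}$ applied to $\prescript{\ast}{}{\phi}(\alpha,\vec{y})\wedge\prescript{\ast}{}{\psi}(\alpha,\vec{z})$. This is an elementary logical manipulation: since $\prescript{\ast}{}{\phi}$ does not involve $\vec{z}$ and $\prescript{\ast}{}{\psi}$ does not involve $\vec{y}$, each of the two variable-free sub-sentences can be pulled past the other block's quantifiers, the manipulation being valid precisely because each constraint set $\prescript{\ast}{}{S_{i}},\prescript{\ast}{}{R_{j}}$ is nonempty and no variable is shared. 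This is the only place where a routine verification is required; the rest is bookkeeping with the monad characterization and the filter properties of $\U$.
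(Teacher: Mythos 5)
Your proposal is correct, and its skeleton matches the paper's proof: the first claim is established by the same complement-and-contradiction argument (your $I_{Y}(\phi)$ formulation is just the paper's $I_{A}(\phi)$ with $A=Y$, followed by intersecting with $A$), and the ``moreover'' part likewise begins by putting the two large sets of good points into $\U$ and intersecting them. Where you diverge is in how you finish the second part: the paper stays entirely standard --- any element $a\in I_{A}(\phi)\cap I_{A}(\psi)$ serves as the existential witness $x$, and since the combined formula is existential and $I_{A}(\phi)\cap I_{A}(\psi)\subseteq A$, each $A\in\U$ models it directly --- whereas you route the conclusion through the nonstandard side, taking a generator $\alpha\in\mu(\U)$, transferring the two constrained matrices, merging them, and invoking the characterization stated after Theorem \ref{thm: bridge} to recover that $\U$ witnesses the combined formula. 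Both are valid (your use of the bridge characterization and of nonemptiness of $\mu(\U)$ is legitimate under the section's standing enlarging assumptions); the paper's version is more economical for this step, while yours has the genuine merit of making explicit the quantifier-merging step that the paper silently absorbs into the assertion that $I_{A}(\phi)\cap I_{A}(\psi)$ models the combined formula. One small correction on that step: you only need the implication from the conjunction of the two separately quantified sentences to the merged sentence, not an equivalence, and that implication holds unconditionally once the variable blocks are disjoint --- if a constraint set is empty under a universal quantifier the merged inner block is vacuously true, and under an existential quantifier the hypothesis that $\U$ witnesses the formula already forces nonemptiness --- so the caveat that every $\prescript{\ast}{}{S_{i}}$, $\prescript{\ast}{}{R_{j}}$ be nonempty is not actually needed.
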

\begin{proof}
By contrast, assume that there exists $A\in\U$ such that $I_{A}(\phi)\notin\U$.
Then $A\setminus I_{A}\in\U$, but 
\[
A\setminus I_{A}(\phi)\models\neg\left(\exists x\,Q_{1}y_{1}\in S_{1}\dots Q_{n}y_{n}\in S_{n}\,\phi\left(x,y_{1},\dots,y_{n}\right)\right),
\]
hence $\U$ is not a witness of the partition regularity of $\exists x\,Q_{1}y_{1}\in S_{1}\dots Q_{n}y_{n}\in S_{n}\,\phi\left(x,y_{1},\dots,y_{n}\right)$,
which is absurd.

As for the second claim, let $A\in\U$. Then $I_{A}(\phi)$ and $I_{A}(\psi)$
belong to $\U$, hence $I_{A}(\phi)\cap I_{A}(\psi)\in\U$, and 
\begin{multline*}
I_{A}(\phi)\cap I_{A}(\psi)\models\exists x\,Q_{1}y_{1}\in S_{1}\dots Q_{n}y_{n}\in S_{n}\widetilde{Q_{1}}z_{1}\in R_{1}\widetilde{Q_{m}}z_{m}\in R_{m}\\
\phi\left(x,y_{1},\dots,y_{n}\right)\wedge\psi\left(x,z_{1},\dots,z_{m}\right).
\end{multline*}
Since this formula is existential, this entails that 
\begin{multline*}
A\models\exists x\,Q_{1}y_{1}\in S_{1}\dots Q_{n}y_{n}\in S_{n}\widetilde{Q_{1}}z_{1}\in R_{1}\widetilde{Q_{m}}z_{m}\in R_{m}\\
\phi\left(x,y_{1},\dots,y_{n}\right)\wedge\psi\left(x,z_{1},\dots,z_{m}\right),
\end{multline*}
hence our claim is proven. 
\end{proof}
\begin{example}
Let $X=\N$. For every $n\in\N$ let $\phi_{n}$ be the formula 
\[
\phi_{n}\left(x_{1},\dots,x_{n},y_{1},\dots,y_{n}\right):=\bigwedge_{i\leq n}\left(\sum_{j\leq i}x_{j}=y_{j}\right)
\]
and let $E\left(\phi_{n}\right)$ be the existential closure of $\phi_{n}$.
Hence, for every $A\in\N$ we have that $A\models E\left(\phi_{n}\right)$
if and only if $A$ contains a subset $\left\{ a_{1},\dots,a_{n}\right\} $
of $n$ elements such that all ordered sums $a_{1}+a_{2},a_{1}+a_{2}+a_{3}$
and so on lie in $A$. By Schur's Theorem (see \cite{schur}) we know
that $E\left(\phi_{2}\right)$ is partition regular. Let $\U$ be
a $E\left(\phi_{2}\right)$ ultrafilter (which, from now on, we will
call a Schur ultrafilter). We claim that $\forall n\in\mathbb{N}\,\,\U\models E\left(\phi_{n}\right)$.
We prove this by induction on $n$.

If $n=2$, the claim coincides with our hypothesis.

Now let $n>2$, let us suppose the claim true for $n-1$, and let
us prove it for $n$. By hypothesis and by inductive hypothesis, we
have that $\U$ is a Schur and a $E\left(\phi_{n}\right)$-ultrafilter.
In particular, $\U$ witnesses the formulas\footnote{The apparently strange naming of the variables is chosen to make more
transparent the argument, at least in our hopes.} 
\[
\exists z\left(\exists x_{1}\exists x_{2}\,x_{1}+x_{2}=z\right)
\]
and 
\[
\exists z\left(\exists x_{3}\dots\exists x_{n}\exists y_{2}\dots\exists y_{n}\,\left(z=y_{2}\right)\wedge\bigwedge_{i=3}^{n}\left(z+\sum_{j=3}^{i}x_{j}=y_{i}\right)\right),
\]

hence by Theorem \ref{thm: iterativity} $\U$ witnesses the formula
\begin{multline*}
\exists z\left(\exists x_{1}\exists x_{2}\,x_{1}+x_{2}=z\right)\wedge\\
\left(\exists x_{3}\dots\exists x_{n}\exists y_{2}\dots\exists y_{n}\,\left(z=y_{2}\right)\wedge\bigwedge_{i=3}^{n}\left(z+\sum_{j=3}^{i}x_{j}=y_{j}\right)\right)
\end{multline*}
therefore (by renaming the variables and by letting $y_{1}=x_{1}$)
$\U$ witnesses the partition regularity of the formula 
\[
\exists x_{1}\dots\exists x_{n}\exists y_{1}\dots\exists y_{n}\,\bigwedge_{i\leq n}\left(\sum_{j\leq i}x_{j}=y_{j}\right),
\]
as desired. 
\end{example}

\subsection{Partition regularity of arbitrary formulas}

Even if, in most cases, applications regard existential closures of
totally open formulas, characterizations similar to that of Theorem
\ref{thm: bridge} hold also in other cases. 
\begin{cor}
\label{cor: bridge}Let $\phi\left(x_{1},\dots,x_{n},y_{1},\dots,y_{m}\right)$
be a totally open formula and, for $i=1,\dots,m$, let $S_{i}\in\mathbb{V}(X)$
and $Q_{i}\in\left\{ \exists,\forall\right\} $. Let $Y\in\mathbb{V}(X)$
and $\U\in\beta Y$. Then the following conditions are equivalent: 
\begin{enumerate}
\item there is a set $A$ in $\U$ that satisfies $U_{\overrightarrow{Qy}\in\overrightarrow{S}}\left(\phi\left(x_{1},\dots,x_{n},y_{1},\dots,y_{m}\right)\right)$; 
\item for every $\alpha_{1},...,\alpha_{n}$ in $\mu(\U)$ the sentence
$Q_{1}y_{1}\in\prescript{\ast}{}{S_{1}},\dots,Q_{m}y_{m}\in\prescript{\ast}{}{S_{m}}$
$\prescript{\ast}{}{\phi}\left(\alpha_{1},\dots,\alpha_{n},y_{1},\dots,y_{m}\right)$
holds true. 
\end{enumerate}
\end{cor}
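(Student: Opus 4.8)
The plan is to reduce both conditions to a statement about the single formula
$\psi\left(x_{1},\dots,x_{n}\right):=Q_{1}y_{1}\in S_{1}\dots Q_{m}y_{m}\in S_{m}\,\phi\left(x_{1},\dots,x_{n},y_{1},\dots,y_{m}\right)$,
whose only free variables are $x_{1},\dots,x_{n}$. Since the constraints $Q_{i}y_{i}\in S_{i}$ make every quantifier on the $y_{i}$ bounded, $\psi$ is a bounded formula to which transfer applies and $\prescript{\ast}{}{\psi}$ is well defined. With this notation, condition (1) reads: there is $A\in\U$ with $\forall x_{1},\dots,x_{n}\in A\,\psi\left(x_{1},\dots,x_{n}\right)$; and condition (2) reads: $\prescript{\ast}{}{\psi}\left(\alpha_{1},\dots,\alpha_{n}\right)$ holds for every $\alpha_{1},\dots,\alpha_{n}\in\mu(\U)$. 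This is precisely the universal-closure analogue of the fixed-$\U$ reformulation of Theorem \ref{thm: bridge} recorded in the remark preceding this statement, and the proof follows the same pattern: transfer handles one implication and the enlarging property the other. As in the proof of Theorem \ref{thm: bridge}, I would assume throughout that the model is sufficiently enlarged, i.e.\ has the $|\wp(Y)|^{+}$-enlarging property, so that monads are nonempty.

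For $(1)\Rightarrow(2)$ I would argue directly by transfer. Fix $A\in\U$ witnessing (1), so the sentence $\forall x_{1}\in A\dots\forall x_{n}\in A\,\psi\left(x_{1},\dots,x_{n}\right)$ holds; by transfer $\forall\xi_{1}\in\prescript{\ast}{}{A}\dots\forall\xi_{n}\in\prescript{\ast}{}{A}\,\prescript{\ast}{}{\psi}\left(\xi_{1},\dots,\xi_{n}\right)$ holds. Since $A\in\U$ we have $\mu(\U)\subseteq\prescript{\ast}{}{A}$, so any $\alpha_{1},\dots,\alpha_{n}\in\mu(\U)$ lie in $\prescript{\ast}{}{A}$ and therefore satisfy $\prescript{\ast}{}{\psi}\left(\alpha_{1},\dots,\alpha_{n}\right)$, which is exactly (2).

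For $(2)\Rightarrow(1)$ I would argue by contraposition, and this is the step I expect to be the main obstacle, since it requires producing $n$ generators of the \emph{same} ultrafilter $\U$ sitting simultaneously in a prescribed internal set. Assume (1) fails: then for every $A\in\U$ there are $x_{1},\dots,x_{n}\in A$ with $\neg\psi\left(x_{1},\dots,x_{n}\right)$. Writing $B=\left\{ \left(x_{1},\dots,x_{n}\right)\in Y^{n}\mid\neg\psi\left(x_{1},\dots,x_{n}\right)\right\}$ and $\pi_{i}:Y^{n}\rightarrow Y$ for the $i$-th projection, this says $A^{n}\cap B\neq\emptyset$ for every $A\in\U$. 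I would then consider the family $\mathcal{G}=\{B\}\cup\left\{ \pi_{i}^{-1}(A)\mid A\in\U,\,i\leq n\right\}\subseteq\wp\left(Y^{n}\right)$. Given finitely many of its members, let $A\in\U$ be the intersection of the finitely many $\U$-sets involved; then $A^{n}\cap B$ is contained in their intersection and is nonempty, so $\mathcal{G}$ has the finite intersection property. As $|\mathcal{G}|\leq|\wp(Y)|$, the $|\wp(Y)|^{+}$-enlarging property (cf.\ Theorem \ref{thm: bridge map}) yields a point $\left(\alpha_{1},\dots,\alpha_{n}\right)\in\bigcap_{C\in\mathcal{G}}\prescript{\ast}{}{C}$. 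For each $i$ and each $A\in\U$ we then have $\alpha_{i}\in\prescript{\ast}{}{A}$, hence $\alpha_{i}\in\mu(\U)$; and $\left(\alpha_{1},\dots,\alpha_{n}\right)\in\prescript{\ast}{}{B}$, which by transfer means $\neg\prescript{\ast}{}{\psi}\left(\alpha_{1},\dots,\alpha_{n}\right)$. This contradicts (2), completing the contrapositive. The only delicate points are the bookkeeping that forces all coordinates into a common $A\in\U$ (so that each $\alpha_{i}$ genuinely generates $\U$) and the cardinality estimate guaranteeing that the enlarging property applies.
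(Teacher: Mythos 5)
Your proposal is correct, but it takes a genuinely different route from the paper. The paper's own proof is a one-line duality reduction: it applies Theorem \ref{thm: bridge} (more precisely, its fixed-ultrafilter form recorded right after the definition of $\phi$-ultrafilter) to the existential closure of $\neg\phi$, the point being that the failure of condition (1) says exactly that every $A\in\U$ models $E_{\overrightarrow{\bar{Q}y}\in\overrightarrow{S}}\left(\neg\phi\right)$ (with the constraint quantifiers dualized), which by that theorem is equivalent to the failure of condition (2). You instead prove the statement from scratch: transfer of the sentence $\forall x_{1}\in A\dots\forall x_{n}\in A\,\psi\left(x_{1},\dots,x_{n}\right)$ for $(1)\Rightarrow(2)$, and for $(2)\Rightarrow(1)$ a contrapositive compactness argument via the family $\{B\}\cup\left\{\pi_{i}^{-1}(A)\mid A\in\U,\,i\leq n\right\}$ in $\wp\left(Y^{n}\right)$, whose finite intersection property plus $|\wp(Y)|^{+}$-enlarging yields a counterexample tuple all of whose coordinates generate $\U$. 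Your verification of the FIP (intersecting the finitely many $\U$-sets into a single $A$ so that $A^{n}\cap B$ sits inside all the pullbacks) and the cardinality bound are exactly right, and the identification of condition (2) with $\prescript{\ast}{}{\psi}$ on $\mu(\U)$ is legitimate since the constrained quantifiers are bounded. What each approach buys: the paper's reduction is shorter and reuses existing machinery, but it silently requires dualizing the $Q_{i}$'s and unwinding the fixed-$\U$ remark, and it ultimately rests on the hyperfinite-partition argument inside the proof of Theorem \ref{thm: bridge}, which is more than is needed when $\U$ is fixed; your direct argument avoids partitions altogether, makes explicit the bookkeeping that forces all $n$ witnesses into the monad of the \emph{same} ultrafilter, and is in spirit the enlarging-based analogue of the saturation argument the paper uses later for Theorem \ref{thm: nonstandard bridge} and Corollary \ref{cor: nonstandard bridge}. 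One stylistic caveat: the corollary as stated does not mention enlarging, so you should flag (as you did) that nonemptiness of $\mu(\U)$, hence sufficient enlarging, is an implicit standing hypothesis; otherwise condition (2) can hold vacuously while condition (1) fails.
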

\begin{proof}
This is just Theorem \ref{thm: bridge} applied to the existential
closure of $\neg\phi$.
\end{proof}
A useful consequence of Corollary \ref{cor: bridge} is that, in some
cases, the existence of a generator with some property implies that
this property is shared by all other generators: 
\begin{cor}
\label{cor: inversion}Let $\phi\left(x,y_{1},\dots,y_{n}\right)$
be a totally open formula and, for $i=1,\dots,m$, let $S_{i}\in\mathbb{V}(X)$
and $Q_{i}\in\left\{ \exists,\forall\right\} $. Let $Y\in\mathbb{V}(X)$,
and let $\U$ be an ultrafilter in $\beta Y$ that witnesses $E_{\overrightarrow{Qy}\in\overrightarrow{S}}\left(\phi\left(x,y_{1},\dots,y_{m}\right)\right)$.
Then the formula 
\[
\forall\alpha\in\mu(\U)\,\prescript{\ast}{}{\phi}\left(\alpha,y_{1},\dots,y_{n}\right)
\]
 holds true. 
\end{cor}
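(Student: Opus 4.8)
The plan is to deduce this directly from the two results immediately preceding it, namely the iterative Theorem \ref{thm: iterativity} and Corollary \ref{cor: bridge}. The essential point is that the existential closure here involves a \emph{single} distinguished variable $x$, and this is precisely what allows one to upgrade the conclusion ``there exists a good generator'' (guaranteed by $\U$ being a witness) to the much stronger ``\emph{every} generator is good''.

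First I would apply Theorem \ref{thm: iterativity} with $A = Y$. Since $\U$ witnesses $E_{\overrightarrow{Qy}\in\overrightarrow{S}}\left(\phi\left(x,y_{1},\dots,y_{m}\right)\right)$, the formula $\exists x\,Q_{1}y_{1}\in S_{1}\dots Q_{m}y_{m}\in S_{m}\,\phi\left(x,y_{1},\dots,y_{m}\right)$ is partition regular (by Theorem \ref{thm: bridge}) and $\U$ is one of its witnesses, so the hypotheses of Theorem \ref{thm: iterativity} are met. That Theorem then yields that the set
\[
I_{Y}(\phi)=\left\{ a\in Y\mid Q_{1}y_{1}\in S_{1}\dots Q_{m}y_{m}\in S_{m}\,\phi\left(a,y_{1},\dots,y_{m}\right)\,\text{holds true}\right\}
\]
belongs to $\U$. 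The key observation is that, by its very definition, $I_{Y}(\phi)$ models the \emph{universal} closure $U_{\overrightarrow{Qy}\in\overrightarrow{S}}\left(\phi\left(x,y_{1},\dots,y_{m}\right)\right)$: every element $a\in I_{Y}(\phi)$ satisfies the constrained matrix, which is exactly the statement $\forall a\in I_Y(\phi)\,Q_{1}y_{1}\in S_{1}\dots Q_{m}y_{m}\in S_{m}\,\phi\left(a,y_{1},\dots,y_{m}\right)$. Thus $I_{Y}(\phi)$ is a set in $\U$ satisfying condition (1) of Corollary \ref{cor: bridge}. Invoking the implication $(1)\Rightarrow(2)$ of that Corollary, I would conclude that for every $\alpha\in\mu(\U)$ the sentence $Q_{1}y_{1}\in\prescript{\ast}{}{S_{1}},\dots,Q_{m}y_{m}\in\prescript{\ast}{}{S_{m}}\,\prescript{\ast}{}{\phi}\left(\alpha,y_{1},\dots,y_{m}\right)$ holds, which is precisely the asserted $\forall\alpha\in\mu(\U)\,\prescript{\ast}{}{\phi}\left(\alpha,y_{1},\dots\right)$.

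I do not expect a genuine obstacle here, since the statement is essentially a repackaging of the preceding machinery; the only care needed is the bookkeeping that translates the existential-closure hypothesis on $\U$ into the universal-closure form demanded by Corollary \ref{cor: bridge}. The crucial structural feature, without which the argument would collapse, is that $\phi$ carries a single monochromatic variable $x$: this is exactly what guarantees that the witness set $I_{Y}(\phi)$ collects \emph{all} admissible values of that variable lying in $\U$, so that replacing the existential closure on $Y$ by the universal closure on $I_{Y}(\phi)\in\U$ loses no information. Were there several distinguished variables $x_1,\dots,x_n$, one could only extract a single common ``good'' fiber rather than a set on which the matrix holds universally, and the inversion would fail.
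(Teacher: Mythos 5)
Your proposal is correct and is essentially identical to the paper's own proof: both apply the first claim of Theorem \ref{thm: iterativity} with $A=Y$ to obtain $I_{Y}(\phi)\in\U$, observe that this set models the universal closure $U_{\overrightarrow{Qy}\in\overrightarrow{S}}\left(\phi\right)$, and then conclude by the implication $(1)\Rightarrow(2)$ of Corollary \ref{cor: bridge}. Your closing remark that the argument hinges on there being a single distinguished variable $x$ (and would fail for several, as the Schur example shows) is accurate, though the paper does not spell it out.
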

\begin{proof}
By Theorem \ref{thm: iterativity} the set $I_{Y}\left(\phi\right)=\left\{ a\in Y\mid\phi\left(a,y_{1},\dots,y_{n}\right)\,\text{holds true}\right\} \in\U$,
namely there is a set $Y$ in $\U$ such that $\forall y\in Y\,\phi\left(y,y_{1},\dots,y_{n}\right)$
holds true. The conclusion hence follows straightforwardly from Corollary
\ref{cor: bridge}. 
\end{proof}
\begin{example}
Let $\U\models\exists x,y_{1},y_{2}\,y_{1}+y_{2}=x.$ In particular,
for every set $A\in\U$ we have that $\U$ witnesses $\exists x\,\exists y_{1},y_{2}\in A\,\left(y_{1}+y_{2}=x\right)$.
Hence from Corollary \ref{cor: inversion} we deduce that $\forall\alpha\in\mu(\U)\,\exists\beta_{1},\beta_{2}\in\prescript{\ast}{}{A}$
such that $\alpha=\beta_{1}+\beta_{2}$. By saturation, this entails
that $\forall\alpha\in\mu(\U)\,\exists\beta_{1},\beta_{2}\in\mu(\U)$
such that $\alpha=\beta_{1}+\beta_{2}$.
\end{example}
\begin{example}
Let $Y=\N$. The formulas 
\[
\phi(d,x,y,z):\,\exists x,y,z,d\,y-x=z-y=d
\]
and 
\[
\psi(d,u,v):\,\exists d,u,v\,u+v=d
\]
are both partition regular and homogeneous. Hence from Example \ref{exa: homogeneous}
we deduce that every ultrafilter $\U\in\overline{K\left(\beta\N,\odot\right)}$
(the minimal closed bilateral ideal in the semigroup $\left(\beta\N,\odot\right)$)
witnesses both $\phi(d,x,y,z)$ and $\psi(d,u,v)$. Therefore, by
Corollary \ref{cor: inversion} we get that for every set $A\in\U$
there exists an arithmetic progression in $A$ of length 3 with a
common difference in $A$ that can be written as a sum of elements
of $A$ and, analogously, that every set $A\in\U$ contains elements
$x,y,z$ that are increments in arithmetic progressions of length
3 and such that $x+y=z$. Moreover, if $\U\odot\U=\U\in\overline{K\left(\beta\N,\odot\right)}$
then $\U$ witnesses also the formula $\varphi(d,u,v):\,\exists d,u,v\,u\cdot v=d$,
hence, again by Corollary \ref{cor: inversion}, we get that every
set $A\in\U$ contains an arithmetic progression in $A$ of length
3 with a common difference in $A$ that can be written as a product
of elements of $A$. 
\end{example}
\begin{example}
\label{exa:Selective}Selective ultrafilters admit several equivalent
characterizations (see e.g.~\cite{Blass}). One of them says that
$\U$ is a selective ultrafilter on $Y$ if and only if every function
$f:Y\rightarrow Y$ is $\U$-equivalent to either an injective or
a constant function, namely there exists $A\in Y$ such that $f|_{A}$
is injective or constant. By Corollary \ref{cor: bridge}, this is
equivalent to say that for every $f:Y\rightarrow Y$ the function
$\prescript{\ast}{}{f}$ is injective or constant on $\mu(\U)$.

Let us consider the case $Y=\N$. In this case, it is simple to see
that ``injective'' can be substituted with ``strictly increasing''.
Let $P(x)\in\mathbb{Z}[x]$. Let $\left\{ a_{n}\mid n\in\N\right\} $
be the sequence inductively defined as follows: $a_{0}=0$ and, for
every $n\geq0$, 
\[
a_{n+1}=\min\left\{ m\in\N\mid m>\left|P\left(j\right)\right|\,\forall j\leq a_{n}\right\} .
\]

Let $f_{P}:\N\rightarrow\N$ be the function such that 
\[
\forall m\in\N\,f(m)=\max\left\{ a_{n}\mid a_{n}\leq m\right\} .
\]
As $f^{-1}\left(m\right)$ is finite for every $m\in\N$, there exists
$A\in\U$ such that $f_{P}|_{A}$ is increasing. Hence we have that
\begin{equation}
\forall P(x)\in\mathbb{Z}\left[x\right]\forall\alpha,\beta\in\mu\left(\U\right)\left(\alpha<\beta\right)\Rightarrow\left(\left|P(\alpha)\right|<\beta\right).\label{eq:selective}
\end{equation}
if $\alpha<\beta$ then $\left|P(\alpha)\right|<\beta$. As a consequence,
we have that no selective ultrafilter is Schur: in fact, if $\U$
is a selective Schur ultrafilter, by Theorem \ref{thm: bridge} there
are $\alpha,\beta,\gamma\in\mu(\U)$ such that $\alpha+\beta=\gamma$
and, if $\alpha\geq\beta$, this means that $\alpha<\gamma\leq2\alpha$,
which is in contrast with the characterization (\ref{eq:selective}). 
\end{example}
\begin{example}
The result of Example \ref{exa:Selective} can be generalized. First
of all, from characterization (\ref{eq:selective}) we deduce immediately
the following strengthening: 
\begin{multline}
\forall n\in\N\,\forall P\left(x_{1},\dots,x_{n}\right)\in\mathbb{Z}\left[x_{1},\dots,x_{n}\right]\,\forall\alpha_{1},\dots,\alpha_{n},\beta\in\mu\left(\U\right)\\
\left(\alpha_{1},\dots,\alpha_{n}<\beta\right)\Rightarrow\left(\left|P\left(\alpha_{1},\dots,\alpha_{n}\right)\right|<\beta\right);\label{eq:selettivo2}
\end{multline}
in fact, if $P\left(x_{1},\dots,x_{n}\right)=\sum_{i=1}^{k}c_{i}x_{1}^{e_{1,i}}\cdot\dots\cdot x_{n}^{e_{n,i}}$
then 
\[
\left|P\left(x_{1},\dots,x_{n}\right)\right|=\left|\sum_{i=1}^{k}c_{i}x_{1}^{e_{1,i}}\cdot\dots\cdot x_{n}^{e_{n,j}}\right|\leq\sum_{i=1}^{k}\left|c_{i}x_{1}^{e_{1,i}}\cdot\dots\cdot x_{n}^{e_{n,i}}\right|,
\]
hence if $\alpha=\max\left\{ \alpha_{i}\mid i\leq n\right\} $ then
$\left|P\left(\alpha_{1},\dots,\alpha_{n}\right)\right|\leq\sum_{i=1}^{k}\left|c_{i}\right|\alpha^{\sum_{j=1}^{n}e_{n,j}},$
so we conclude by characterization (\ref{eq:selective}).

Now we use fact (\ref{eq:selettivo2}) to prove that for every polynomial
$P\left(x_{1},\dots,x_{n}\right)\in\mathbb{Z}\left[x_{1},\dots,x_{n}\right]$
and for every selective ultrafilter $\U$, $\U$ is not a witness
of the partition regularity of the formula 
\begin{equation}
\exists x_{1},\dots,x_{n}\,\left(\bigwedge_{1\leq i<j\leq n}x_{i}\neq x_{j}\right)\wedge P\left(x_{1},\dots,x_{n}\right)=0.\label{eq:selettivi3}
\end{equation}
We proceed by induction. If $n=2$, the claim is trivial, as in this
case by Rado's Theorem\footnote{\begin{thm*}[Rado]
A linear polynomial $\sum_{i=1}^{n}c_{i}x_{i}$ is partition regular
if and only if there exists a non empty subset $I\subseteq\left\{ 1,\dots,n\right\} $
such that $\sum_{i\in I}c_{i}=0$.
\end{thm*}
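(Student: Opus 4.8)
The plan is to prove Rado's criterion in both directions, the forward implication (necessity) being a short explicit‑coloring argument and the reverse implication (sufficiency) being the substantial one, which I would route through the machinery of this paper. For sufficiency I would use Theorem \ref{thm: bridge} to replace ``partition regular'' by ``there exist suitable generators of a single ultrafilter'', and then manufacture those generators from a minimal idempotent of $\left(\beta\N,\oplus\right)$ by iterating Corollary \ref{cor:operations semigroups}; the genuine Ramsey‑theoretic input (van der Waerden's theorem) enters only at the final step.

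For necessity I argue by contraposition: assume no nonempty $I\subseteq\{1,\dots,n\}$ satisfies $\sum_{i\in I}c_{i}=0$ and produce a finite coloring of $\N$ admitting no monochromatic solution. Fix a prime $p>\sum_{i=1}^{n}\left|c_{i}\right|$ and color $m\in\N$ by $\chi(m)=w\bmod p$, where $m=p^{v}w$ with $p\nmid w$, using the $p-1$ colors $\{1,\dots,p-1\}$. If $x_{1},\dots,x_{n}$ were a monochromatic solution of color $r\neq 0$, write $x_{i}=p^{v_{i}}w_{i}$ with $w_{i}\equiv r\pmod p$, put $v=\min_{i}v_{i}$ and $M=\{i\mid v_{i}=v\}\neq\emptyset$; dividing $\sum_{i}c_{i}x_{i}=0$ by $p^{v}$ and reducing modulo $p$ kills every term with $v_{i}>v$ and leaves $r\sum_{i\in M}c_{i}\equiv 0\pmod p$. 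Since $r\not\equiv 0$ this gives $\sum_{i\in M}c_{i}\equiv 0\pmod p$, and as $\bigl|\sum_{i\in M}c_{i}\bigr|\leq\sum_{i}|c_{i}|<p$ we get $\sum_{i\in M}c_{i}=0$, contradicting the assumption. Hence $\chi$ has no monochromatic solution and the equation is not partition regular.

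For sufficiency, suppose $\sum_{i\in I}c_{i}=0$ for some nonempty $I$. By Theorem \ref{thm: bridge} with $m=0$, $Y=\N$ and $\phi$ the formula $\sum_{i=1}^{n}c_{i}x_{i}=0$, it suffices to find an ultrafilter $\U$ and generators $\alpha_{1},\dots,\alpha_{n}\in\mu_{\infty}(\U)$ with $\sum_{i=1}^{n}c_{i}\alpha_{i}=0$. I would take $\U$ a minimal idempotent of $\left(\beta\N,\oplus\right)$ and build the $\alpha_{i}$ from a single $\alpha\in\mu(\U)$: iterating Corollary \ref{cor:operations semigroups} (exactly as in the Schur example, where $\U=\U\oplus\U$) shows that for distinct levels $j_{1}<\dots<j_{r}$ the sum $S_{j_{1}}(\alpha)+\dots+S_{j_{r}}(\alpha)$ lies in $\mu_{\infty}(\U)$, hence is again a generator of $\U$. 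When $\sum_{i\in I}c_{i}=0$ can be realized by a telescoping of such finite sums — as for $x_{1}+x_{2}-x_{3}=0$, taking $\alpha_{1}=\alpha$, $\alpha_{2}=S_{1}(\alpha)$, $\alpha_{3}=S_{0}(\alpha)+S_{1}(\alpha)$ so that $\alpha_{1}+\alpha_{2}-\alpha_{3}=0$ — the conclusion is immediate.

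The main obstacle is precisely this last realization for arbitrary coefficients: an equation such as $2x_{1}-2x_{2}+x_{3}=0$, whose only zero‑sum subset is $\{1,2\}$ with $\gcd 2\nmid 1$, cannot be solved by sums of distinct single‑level generators, so a naive assignment of levels fails. The honest remedy is to feed in the full strength of van der Waerden's theorem — concretely, a monochromatic progression together with its common difference (Brauer's configuration), or equivalently Deuber's $(m,p,c)$‑sets, or the fact that every member of a minimal idempotent is central and therefore contains a solution of every equation satisfying the columns condition, which for a single equation is exactly the subset‑sum‑zero hypothesis. I would therefore isolate as the crux the statement that a minimal idempotent $\U$ admits generators with $\sum_{i}c_{i}\alpha_{i}=0$ whenever $\sum_{i\in I}c_{i}=0$, and establish it by transferring such a monochromatic van der Waerden configuration into the generator description furnished by Theorem \ref{thm: bridge}.
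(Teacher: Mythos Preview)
The paper does not prove Rado's theorem at all: it appears only as an unnumbered statement in a footnote, attributed to \cite{Rado}, and is used as a black box in the surrounding example. There is therefore no ``paper's own proof'' to compare against; anything you write is already more than the paper provides.

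On the merits of your proposal: the necessity direction is complete and correct --- your $p$-adic coloring is the standard argument and is carried out cleanly. The sufficiency direction, however, is only a plan, and you acknowledge this. You correctly observe that sums of iterated star-images $S_{j_{1}}(\alpha)+\cdots+S_{j_{r}}(\alpha)$ of a single generator of an additive idempotent handle Schur-type equations but not, say, $2x_{1}-2x_{2}+x_{3}=0$; and you correctly name the missing ingredient (van der Waerden / Brauer configurations / Deuber $(m,p,c)$-sets / centrality of members of minimal idempotents). But the step ``transfer a monochromatic van der Waerden configuration into generators satisfying $\sum c_{i}\alpha_{i}=0$'' is precisely the substantive content of the sufficiency proof, and you have not executed it. Concretely, to finish you would need either (i) to quote that every central set contains solutions of every equation with the columns condition (which for one equation is your subset-sum-zero hypothesis) --- at which point Theorem~\ref{thm: bridge} is not really doing work beyond restating partition regularity --- or (ii) to actually construct the generators, which requires mixing a multiplicatively large $\gamma$ (to absorb the gcd obstruction) with additive shifts coming from the idempotent, i.e.\ essentially reproducing Deuber's argument inside $\prescript{\bullet}{}{\N}$. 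As written, the sufficiency half is an outline with the hard part still to do.
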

} (see \cite{Rado}) the only partition regular polynomial in two variables
is $x-y$.

Now let $n>2$ and let us assume the claim to be true for $n-1$.
Assume, by contrast, that there exists a polynomial $P\left(x_{1},\dots,x_{n}\right)\in\mathbb{Z}\left[x_{1},\dots,x_{n}\right]$
and a selective ultrafilter $\U$ that witnesses the partition regularity
of the formula (\ref{eq:selettivi3}). Then, by Theorem \ref{thm: bridge}
we can find mutually distinct elements $\alpha_{1},\dots,\alpha_{n}\in\mu(\U)$
such that $P\left(\alpha_{1},\dots,\alpha_{n}\right)=0$. By rearranging
the indexes, if necessary, we can assume that $\alpha_{n}=\max\left\{ \alpha_{i}\mid i\leq n\right\} $.

Let $P\left(x_{1},\dots,x_{n}\right)=\sum_{i=1}^{k}c_{i}x_{1}^{e_{1,i}}\cdot\dots\cdot x_{n}^{e_{n,i}}$,
let $J=\left\{ i\in[1,k]\mid e_{n,i}>0\right\} $, let $Q\left(x_{1},\dots,x_{n}\right)=\sum_{i\in J}c_{i}x_{1}^{e_{1,i}}\cdot\dots\cdot x_{n}^{e_{n,i}}$
and $R\left(x_{1},\dots,x_{n-1}\right)=\sum_{i\notin J}c_{i}x_{1}^{e_{1,i}}\cdot\dots\cdot x_{n-1}^{e_{n-1,i}}$.
As $P\left(\alpha_{1},\dots,\alpha_{n}\right)=0,$ we have that 
\[
\left|Q\left(\alpha_{1},\dots,\alpha_{n}\right)\right|=\left|R\left(\alpha_{1},\dots,\alpha_{n-1}\right)\right|.
\]
From characterization (\ref{eq:selettivo2}) we have that $\left|R\left(\alpha_{1},\dots,\alpha_{n-1}\right)\right|<\alpha_{n}$.
We consider two cases: 
\begin{itemize}
\item If $Q\left(\alpha_{1},\dots,\alpha_{n}\right)\neq0$ then $\left|Q\left(\alpha_{1},\dots,\alpha_{n}\right)\right|\geq\alpha_{n}$,
as $Q\left(x_{1},\dots,x_{n}\right)\in\mathbb{Z}\left[x_{1},\dots,x_{n}\right]$,
$x_{n}\mid Q\left(x_{1},\dots,x_{n}\right)$ and $Q\left(\alpha_{1},\dots,\alpha_{n}\right)\neq0$,
hence it cannot be $\left|Q\left(\alpha_{1},\dots,\alpha_{n}\right)\right|=\left|R\left(\alpha_{1},\dots,\alpha_{n-1}\right)\right|$
and we have reached an absurd; 
\item If $Q\left(\alpha_{1},\dots,\alpha_{n}\right)=0$ then $R\left(x_{1},\dots,x_{n-1}\right)=0$,
and we can conclude by using the inductive hypothesis. 
\end{itemize}
\end{example}

\subsection{Combinatorial properties with internal parameters}

As shown in our examples, Theorem \ref{thm: bridge} can be used to
prove several properties of monads. This result can be strengthened,
in saturated extensions, taking into account also internal parameters: 
\begin{thm}
\label{thm: nonstandard bridge}Let $\overrightarrow{p}:=\left(p_{1},\dots,p_{k}\right)$,
where $p_{1},\dots,p_{k}$ are internal objects in $\mathbb{V}(X)$.
Let $S_{1},\dots,S_{m}$ be internal sets in $\mathbb{V}(X)$ and,
for every $i=1,\dots,m$ let $Q_{i}\in\left\{ \exists,\forall\right\} $.
Let $\U\in\beta Y$ and let $\phi\left(x_{1},\dots,x_{n},y_{1},\dots,y_{m},z_{1},\dots,z_{k}\right)$
be a totally open formula. The following facts are equivalent: 
\begin{enumerate}
\item \label{enu: nonstandard bridge 1}$\forall A\in\U\,\exists\alpha_{1}\dots\alpha_{n}\in\prescript{\ast}{}{A}\,Q_{1}\beta_{1}\in S_{1}\dots Q_{m}\beta_{m}\in S_{m}\,\prescript{\ast}{}{\phi}\left(\overrightarrow{\alpha},\overrightarrow{\beta},\overrightarrow{p}\right)$
holds true, where $\overrightarrow{\alpha}=\left(\alpha_{1},\dots,\alpha_{n}\right)$
and $\overrightarrow{\beta}=\left(\beta_{1},\dots,\beta_{m}\right)$; 
\item \label{enu: nonstandard bridge 2}$\exists\alpha_{1}\dots\alpha_{n}\in\mu(\U)\,Q_{1}\beta_{1}\in S_{1}\dots Q_{1}\beta_{m}\in S_{m}\,\prescript{\ast}{}{\phi}\left(\overrightarrow{\alpha},\overrightarrow{\beta},\overrightarrow{p}\right)$
holds true, where $\overrightarrow{\alpha}=\left(\alpha_{1},\dots,\alpha_{n}\right)$
and $\overrightarrow{\beta}=\left(\beta_{1},\dots,\beta_{m}\right)$. 
\end{enumerate}
\end{thm}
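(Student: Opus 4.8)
The plan is to deduce $(\ref{enu: nonstandard bridge 2})\Rightarrow(\ref{enu: nonstandard bridge 1})$ immediately from the inclusion $\mu(\U)\subseteq\prescript{\ast}{}{A}$, and to obtain the substantial implication $(\ref{enu: nonstandard bridge 1})\Rightarrow(\ref{enu: nonstandard bridge 2})$ by a finite-intersection-property argument together with saturation. The whole point of assuming the parameters $p_1,\dots,p_k$ and the constraint sets $S_1,\dots,S_m$ to be \emph{internal} is that it makes the matrix of the formula an internal condition on the tuple $\overrightarrow{\alpha}$, which is precisely what lets saturation be applied here.

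For $(\ref{enu: nonstandard bridge 2})\Rightarrow(\ref{enu: nonstandard bridge 1})$ I would simply observe that if $\alpha_1,\dots,\alpha_n\in\mu(\U)$ witness the formula, then for each $A\in\U$ we have $\mu(\U)\subseteq\prescript{\ast}{}{A}$, so the very same $\alpha_i$ lie in $\prescript{\ast}{}{A}$ and witness $(\ref{enu: nonstandard bridge 1})$ for that $A$.

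For the main implication, the first step is to collapse the inner quantifier block into a single internal set: let $B\subseteq(\prescript{\ast}{}{Y})^n$ be the set of all tuples $(\xi_1,\dots,\xi_n)$ for which $Q_1\beta_1\in S_1\dots Q_m\beta_m\in S_m\,\prescript{\ast}{}{\phi}\left(\xi_1,\dots,\xi_n,\beta_1,\dots,\beta_m,\overrightarrow{p}\right)$ holds. Since $\prescript{\ast}{}{\phi}$ is an internal relation, the $S_i$ are internal sets over which the bounded quantifiers $Q_i\beta_i$ range, and the parameters $\overrightarrow{p}$ are internal, the internal definition principle guarantees that $B$ is internal. For each $A\in\U$ set $C_A:=(\prescript{\ast}{}{A})^n\cap B$, which is again internal; hypothesis $(\ref{enu: nonstandard bridge 1})$ asserts exactly that $C_A\neq\emptyset$ for every $A\in\U$.

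It remains to intersect all the $C_A$. The family $\left\{C_A\mid A\in\U\right\}$ has the finite intersection property: given $A_1,\dots,A_r\in\U$, the set $A=A_1\cap\dots\cap A_r$ still lies in $\U$, and from $\prescript{\ast}{}{\left(A_1\cap\dots\cap A_r\right)}=\prescript{\ast}{}{A_1}\cap\dots\cap\prescript{\ast}{}{A_r}$ together with the fact that forming $n$-th Cartesian powers commutes with finite intersections, one gets $C_A=C_{A_1}\cap\dots\cap C_{A_r}\neq\emptyset$. Invoking saturation on this family of internal sets, which has cardinality at most $|\wp(Y)|$, the intersection $\bigcap_{A\in\U}C_A$ is nonempty. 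Any $(\alpha_1,\dots,\alpha_n)$ in it satisfies $\alpha_i\in\bigcap_{A\in\U}\prescript{\ast}{}{A}=\mu(\U)$ for each $i$, while membership in $B$ is the statement that $Q_1\beta_1\in S_1\dots Q_m\beta_m\in S_m\,\prescript{\ast}{}{\phi}\left(\overrightarrow{\alpha},\overrightarrow{\beta},\overrightarrow{p}\right)$ holds; this is exactly $(\ref{enu: nonstandard bridge 2})$. The only genuinely delicate step is the internality of $B$: this is where the internal-parameter hypotheses are essential, and it is what allows saturation to play here the role that the hyperfinite refining partition played in the proof of Theorem \ref{thm: bridge}.
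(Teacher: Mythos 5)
Your proposal is correct and follows essentially the same route as the paper: the paper's proof also handles $(2)\Rightarrow(1)$ via the inclusion $\mu(\U)\subseteq\prescript{\ast}{}{A}$, and for $(1)\Rightarrow(2)$ it defines, for each $A\in\U$, the internal set $I_{A}$ of witness tuples in $\prescript{\ast}{}{A}^{n}$ (your $C_{A}=(\prescript{\ast}{}{A})^{n}\cap B$ is exactly this set), observes the finite intersection property via $I_{A_{1}}\cap I_{A_{2}}=I_{A_{1}\cap A_{2}}$ and internality via the internal definition principle, and concludes by saturation. Your only cosmetic difference is factoring the definition through the single internal set $B$ before intersecting with $(\prescript{\ast}{}{A})^{n}$, which changes nothing of substance.
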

\begin{proof}
(\ref{enu: nonstandard bridge 1})$\Rightarrow$(\ref{enu: nonstandard bridge 2})
For every $A\in\U$ let 
\begin{multline*}
I_{A}=\{\left(\alpha_{1},\dots,\alpha_{m}\right)\in\prescript{\ast}{}{A}^{m}\mid Q_{1}\beta_{1}\in S_{1}\dots Q_{m}\beta_{m}\in S_{m}\\
\prescript{\ast}{}{\phi}\left(\alpha_{1},\dots,\alpha_{n},\beta_{1},\dots,\beta_{m},p_{1},\dots,p_{k}\right)\,\text{holds true}\}.
\end{multline*}
The family $\left\{ I_{A}\right\} _{A\in\U}$ has the finite intersection
property as $I_{A_{1}}\cap I_{A_{2}}=I_{A_{1}\cap A_{2}}$, and every
set $I_{A}$ is internal by the internal definition principle. Hence,
by saturation the formula 
\[
\exists\alpha_{1}\dots\alpha_{n}\in\mu(\U)\,Q_{1}\beta_{1}\in S_{1}\dots Q_{m}\beta_{m}\in S_{m}\,\prescript{\ast}{}{\phi}\left(\alpha_{1},\dots,\alpha_{n},\beta_{1},\dots,\beta_{m},p_{1},\dots,p_{k}\right)
\]
holds true.

(\ref{enu: nonstandard bridge 1})$\Rightarrow$(\ref{enu: nonstandard bridge 2})
Just notice that $\mu(\U)\subseteq\prescript{\ast}{}{A}$ for every
$A\in\U$ by definition. 
\end{proof}
\begin{cor}
\label{cor: nonstandard bridge}Let $\overrightarrow{p}:=\left(p_{1},\dots,p_{k}\right)$
where $p_{1},\dots,p_{k}$ are internal objects in $\mathbb{V}(X)$.
Let $S_{1},\dots,S_{m}$ be internal sets in $\mathbb{V}(X)$ and,
for every $i=1,\dots,m$ let $Q_{i}\in\left\{ \exists,\forall\right\} $.
Let $\U\in\beta Y$ and let $\phi\left(x_{1},\dots,x_{n},y_{1},\dots,y_{m},z_{1},\dots,z_{k}\right)$
be a totally open formula. The following facts are equivalent: 
\begin{enumerate}
\item \label{enu: nonstandard bridge 1-1}$\exists A\in\U\,\forall\alpha_{1}\dots\alpha_{n}\in\prescript{\ast}{}{A}\,Q_{1}\beta_{1}\in S_{1}\dots Q_{m}\beta_{m}\in S_{m}\,\prescript{\ast}{}{\phi}\left(\overrightarrow{\alpha},\overrightarrow{\beta},\overrightarrow{p}\right)$
holds true, where $\overrightarrow{\alpha}=\left(\alpha_{1},\dots,\alpha_{n}\right)$
and $\overrightarrow{\beta}=\left(\beta_{1},\dots,\beta_{m}\right)$; 
\item \label{enu: nonstandard bridge 2-1}$\forall\alpha_{1}\dots\alpha_{n}\in\mu(\U)\,Q_{1}\beta_{1}\in S_{1}\dots Q_{m}\beta_{m}\in S_{m}\,\prescript{\ast}{}{\phi}\left(\overrightarrow{\alpha},\overrightarrow{\beta},\overrightarrow{p}\right)$
holds true, where $\overrightarrow{\alpha}=\left(\alpha_{1},\dots,\alpha_{n}\right)$
and $\overrightarrow{\beta}=\left(\beta_{1},\dots,\beta_{m}\right)$. 
\end{enumerate}
\end{cor}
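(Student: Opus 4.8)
The plan is to obtain this Corollary from Theorem \ref{thm: nonstandard bridge} by duality, in exactly the spirit in which Corollary \ref{cor: bridge} was deduced from Theorem \ref{thm: bridge}. For each $i\le m$ let $\overline{Q_{i}}$ denote the dual quantifier, so that $\overline{\exists}=\forall$ and $\overline{\forall}=\exists$, and I would apply Theorem \ref{thm: nonstandard bridge} to the totally open formula $\neg\phi$ with the dualized constraints $\overline{Q_{i}}y_{i}\in S_{i}$. This application is legitimate, since $S_{1},\dots,S_{m}$ are internal, $\overrightarrow{p}$ consists of internal objects, and $\neg\phi$ is again a totally open formula.

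Theorem \ref{thm: nonstandard bridge}, applied to $\neg\phi$, asserts the equivalence of
\[
\forall A\in\U\,\exists\alpha_{1}\dots\alpha_{n}\in\prescript{\ast}{}{A}\,\overline{Q_{1}}\beta_{1}\in S_{1}\dots\overline{Q_{m}}\beta_{m}\in S_{m}\,\prescript{\ast}{}{(\neg\phi)}\left(\overrightarrow{\alpha},\overrightarrow{\beta},\overrightarrow{p}\right)
\]
and
\[
\exists\alpha_{1}\dots\alpha_{n}\in\mu(\U)\,\overline{Q_{1}}\beta_{1}\in S_{1}\dots\overline{Q_{m}}\beta_{m}\in S_{m}\,\prescript{\ast}{}{(\neg\phi)}\left(\overrightarrow{\alpha},\overrightarrow{\beta},\overrightarrow{p}\right).
\]
I would then negate both statements. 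The outer negation turns $\forall A\in\U\,\exists\overrightarrow{\alpha}\in\prescript{\ast}{}{A}$ into $\exists A\in\U\,\forall\overrightarrow{\alpha}\in\prescript{\ast}{}{A}$ and turns $\exists\overrightarrow{\alpha}\in\mu(\U)$ into $\forall\overrightarrow{\alpha}\in\mu(\U)$; the inner negation sends the block $\overline{Q_{1}}\beta_{1}\in S_{1}\dots\overline{Q_{m}}\beta_{m}\in S_{m}\,\neg\phi$ back to $Q_{1}\beta_{1}\in S_{1}\dots Q_{m}\beta_{m}\in S_{m}\,\phi$. Hence the negation of the first displayed statement is precisely condition (\ref{enu: nonstandard bridge 1-1}), and the negation of the second is precisely condition (\ref{enu: nonstandard bridge 2-1}). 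Since the two displayed statements are equivalent by Theorem \ref{thm: nonstandard bridge}, so are their negations, and the Corollary follows.

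The only step needing verification is this inner negation of the quantifier block, i.e.\ that $\neg\left(\overline{Q_{1}}\beta_{1}\in S_{1}\dots\overline{Q_{m}}\beta_{m}\in S_{m}\,\neg\phi\right)$ is logically equivalent to $Q_{1}\beta_{1}\in S_{1}\dots Q_{m}\beta_{m}\in S_{m}\,\phi$. This is checked by pushing the negation inward one bounded quantifier at a time, using $\neg\forall y\in S\,\psi\equiv\exists y\in S\,\neg\psi$ and $\neg\exists y\in S\,\psi\equiv\forall y\in S\,\neg\psi$ together with $\neg\neg\phi\equiv\phi$; an immediate induction on $m$ then gives the claim. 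This bookkeeping is purely formal, so I expect no genuine obstacle here: all the substantive content, namely the finite-intersection-plus-saturation argument, is already packaged inside Theorem \ref{thm: nonstandard bridge}.
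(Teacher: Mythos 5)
Your proposal is correct and is essentially the paper's own proof: the paper disposes of this Corollary with the single line ``apply Theorem \ref{thm: nonstandard bridge} to $\neg\phi$,'' and your argument is precisely that application, spelled out. The only difference is that you make explicit the bookkeeping the paper leaves implicit --- namely that one must apply the Theorem with the \emph{dualized} bounded quantifiers $\overline{Q_{i}}$ (legitimate, since the $Q_{i}$ are arbitrary in the Theorem) so that negating both sides of the resulting equivalence pushes the inner negation through the quantifier block and recovers conditions (\ref{enu: nonstandard bridge 1-1}) and (\ref{enu: nonstandard bridge 2-1}) exactly.
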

\begin{proof}
Just apply Theorem \ref{thm: nonstandard bridge} to $\neg\phi\left(x_{1},\dots,x_{n},y_{1},\dots,y_{m},z_{1},\dots,z_{k}\right)$. 
\end{proof}
\begin{example}
Let $X=\mathbb{Q}$. Let $\U$ be a positive infinite ultrafilter
in $\mathbb{\beta Q}$ (in the sense of Example \ref{exa: tensor pairs in Q}).
We claim that $\mu(\U)$ is right and left unbounded in the set $Inf\left(\prescript{\ast}{}{\Q}\right)$
of positive infinite elements of $\prescript{\ast}{}{\Q}$. By contrast,
assume that there are $\beta_{1},\beta_{2}\in Inf\left(\prescript{\ast}{}{\Q}\right)$
such that $\beta_{1}<\alpha<\beta_{2}$ for every $\alpha\in\mu(\U)$.
Then by Corollary \ref{cor: nonstandard bridge} we have that there
exists $A\in\U$ such that $\beta_{1}<\alpha<\beta_{2}$ for every
$\alpha\in\prescript{\ast}{}{A}$. However: 
\begin{itemize}
\item $\beta_{1}$ cannot exist, as $A\subseteq\prescript{\ast}{}{A}$ and
$q<\alpha_{1}$ for every $q\in Inf\left(\prescript{\ast}{}{\Q}\right)$; 
\item $\alpha_{2}$ cannot exist, as every set $B\in\U$ is right unbounded
(and so is $\prescript{\ast}{}{B}$ by transfer). 
\end{itemize}
\end{example}
\begin{example}
Let $X=\N^{\N}$. Let $\U$ be an ultrafilter in $\beta X$ and let
$\alpha_{1},\alpha_{2}\in\prescript{\ast}{}{\N}$. Then every generator
$\varphi$ of $\U$ maps $\alpha_{1}$ into $\alpha_{2}$ if and only
if there is a set $B\in\U$ such that every function in $B$ maps
$\alpha_{1}$ into $\alpha_{2}$. For example, if $\alpha_{1}\in\N$
and $\alpha_{2}\in\,^{\ast}\N\setminus\N$ this means that no ultrafilter
has this property, as if a function $f\in B$ then $\prescript{\ast}{}{f}\left(\alpha_{1}\right)\in\N$. 
\end{example}
We conclude by considering another version of the partition regularity
of properties where multiple ultrafilters are considered at once\footnote{Similar ideas, but in a rather different context, appeared in \cite{Blass2}.}. 
\begin{example}
\label{exa: finite embed}In \cite{04,17,19} it has been introduced
and studied the notion of finite embeddability between subsets of
$\N$. In \cite{21}, this notion has been extended to arbitrary families
of functions and semigroups. In particular, if $\left(S,\cdot\right)$
is a commutative\footnote{In a very similar way, we can work with non-commutative semigroups;
however, this means considering the different notions of right and
left finite embeddability, and we prefer to avoid such complications
here.} semigroup, a set $A\subseteq S$ is finitely embeddable in a set
$B\subseteq S$ (notation: $A\leq_{fe}B$) iff for every finite subset
$F\subseteq S$ there exists $t\in S$ such that $t\cdot F\subseteq B$.
If we fix the cardinality $n$ of the finite set $F$, we can rewrite
this property as 
\[
\forall a_{1},\dots,a_{n}\in A\,\exists b_{1},\dots,b_{n}\in B\,\exists t\in S\,\bigwedge_{i\leq n}\left(a_{i}\cdot t=b_{i}\right).
\]
This notion has been extended to ultrafilters in \cite{19}: a ultrafilter
$\U\in\beta S$ is finitely embeddable in $\V\in\beta S$ (notation:
$\U\leq_{fe}\V$) if and only if for every set $B\in\V$ there exists
$A\in\U$ such that $A\leq_{fe}B$. Once again, if in $\leq_{fe}$
we fix the cardinality of the finite sets to be embedded, we can rewrite
the finite embeddability between ultrafilters as follows: 
\[
\forall A\in\V\,\exists B\in\,\U\,\forall a_{1},\dots,a_{n}\in A\,\exists b_{1},\dots,b_{n}\in B\,\exists t\in S\,\bigwedge_{i\leq n}\left(a_{i}+t=b_{i}\right).
\]
\end{example}
We want to give a nonstandard characterization of properties like
that expressed in Example \ref{exa: finite embed}. For the sake of
simplicity, we give it for an alternation $\forall-\exists$ of two
ultrafilters; similar characterizations for arbitrary finite amounts
of ultrafilters and different alternations of quantifiers can be analogously
deduced. 
\begin{thm}
\label{thm: nonstandard bridge-1}Let $\overrightarrow{p}:=\left(p_{1},\dots,p_{k}\right)$,
where $p_{1},\dots,p_{k}$ are internal objects in $\mathbb{V}(X)$.
Let $S_{1},\dots,S_{h}$ be internal sets in $\mathbb{V}(X)$. Let
$\U,\V\in\beta Y$ and let $\phi\left(x_{1},\dots,x_{n},y_{1},\dots,y_{m},t_{1},\dots,t_{h},z_{1},\dots,z_{k}\right)$
be a totally open formula. Assume that the extension $\prescript{\ast}{}{Y}$
is $|Y|^{+}$-saturated. The following facts are equivalent: 
\begin{enumerate}
\item \label{enu: nonstandard bridge 1-2}$\forall A\in\U\exists B\in\V\,\forall\beta_{1},\dots,\beta_{m}\in\prescript{\ast}{}{B}\,\exists\alpha_{1},\dots,\alpha_{n}\in\prescript{\ast}{}{A}\,\exists s_{1}\in S_{1}\dots\exists s_{h}\in S_{h}\,\prescript{\ast}{}{\phi}\left(\overrightarrow{\alpha},\overrightarrow{\beta},\overrightarrow{s},\overrightarrow{p}\right)$,
where $\overrightarrow{\alpha}=\left(\alpha_{1},\dots,\alpha_{n}\right),\overrightarrow{\beta}=\left(\beta_{1},\dots,\beta_{m}\right),\overrightarrow{s}=\left(s_{1},\dots,s_{h}\right),$
$\overrightarrow{p}=\left(p_{1},\dots,p_{k}\right)$; 
\item \label{enu: nonstandard bridge 2-2}$\forall\beta_{1}\dots\beta_{n}\in\mu(\U)\,\exists\alpha_{1}\dots\alpha_{m}\in\mu\left(\V\right)\,\exists s_{1}\in S_{1}\dots\exists s_{h}\in S_{h}\,\prescript{\ast}{}{\phi}\left(\overrightarrow{\alpha},\overrightarrow{\beta},\overrightarrow{s},\overrightarrow{p}\right)$
holds true, where $\overrightarrow{\alpha}=\left(\alpha_{1},\dots,\alpha_{n}\right),\overrightarrow{\beta}=\left(\beta_{1},\dots,\beta_{m}\right),\overrightarrow{s}=\left(s_{1},\dots,s_{h}\right),\overrightarrow{p}=\left(p_{1},\dots,p_{k}\right)$. 
\end{enumerate}
\end{thm}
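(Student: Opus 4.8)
The plan is to prove both implications by the finite-intersection-property-plus-saturation technique already used in the proof of Theorem \ref{thm: nonstandard bridge}, now run twice, once for each of the two ultrafilters. Throughout I read the monadic condition (\ref{enu: nonstandard bridge 2-2}) with the universally quantified tuple ranging over $\mu(\V)$ and the existentially quantified tuple over $\mu(\U)$, so as to match the set memberships in (\ref{enu: nonstandard bridge 1-2}), where the $\beta_{j}$ lie in $\prescript{\ast}{}{B}$ with $B\in\V$ and the $\alpha_{i}$ in $\prescript{\ast}{}{A}$ with $A\in\U$. The standing facts I would use are $\prescript{\ast}{}{(A_{1}\cap A_{2})}=\prescript{\ast}{}{A_{1}}\cap\prescript{\ast}{}{A_{2}}$, the identity $\bigcap_{A\in\U}\prescript{\ast}{}{A}=\mu(\U)$, and that any set defined from the internal parameters $\overrightarrow{p}$, the internal sets $S_{j}$, a fixed internal $\prescript{\ast}{}{A}$ (or $\prescript{\ast}{}{B}$) and $\prescript{\ast}{}{\phi}$ is internal by the internal definition principle.

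For (\ref{enu: nonstandard bridge 1-2})$\Rightarrow$(\ref{enu: nonstandard bridge 2-2}) I would fix $\overrightarrow{\beta}\in\mu(\V)^{m}$ and, for each $A\in\U$, extract from (\ref{enu: nonstandard bridge 1-2}) a witnessing set $B_{A}\in\V$. Since $\overrightarrow{\beta}\in\mu(\V)^{m}\subseteq(\prescript{\ast}{}{B_{A}})^{m}$, the inner universal clause applies to this very $\overrightarrow{\beta}$ and yields some $(\overrightarrow{\alpha},\overrightarrow{s})$ with $\overrightarrow{\alpha}\in(\prescript{\ast}{}{A})^{n}$ and $\prescript{\ast}{}{\phi}(\overrightarrow{\alpha},\overrightarrow{\beta},\overrightarrow{s},\overrightarrow{p})$. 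Hence each internal set $I_{A}=\{(\overrightarrow{\alpha},\overrightarrow{s})\in(\prescript{\ast}{}{A})^{n}\times\prod_{j}S_{j}\mid\prescript{\ast}{}{\phi}(\overrightarrow{\alpha},\overrightarrow{\beta},\overrightarrow{s},\overrightarrow{p})\}$ is nonempty, and $I_{A_{1}}\cap I_{A_{2}}=I_{A_{1}\cap A_{2}}$ shows that $\{I_{A}\}_{A\in\U}$ has the finite intersection property. Saturation then produces a point of $\bigcap_{A\in\U}I_{A}$, whose first coordinate lies in $\mu(\U)^{n}$, which is exactly (\ref{enu: nonstandard bridge 2-2}).

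For the converse (\ref{enu: nonstandard bridge 2-2})$\Rightarrow$(\ref{enu: nonstandard bridge 1-2}) I would fix $A\in\U$ and introduce the internal predicate $\theta(\overrightarrow{\beta}):\ \exists\overrightarrow{\alpha}\in(\prescript{\ast}{}{A})^{n}\,\exists\overrightarrow{s}\in\prod_{j}S_{j}\,\prescript{\ast}{}{\phi}(\overrightarrow{\alpha},\overrightarrow{\beta},\overrightarrow{s},\overrightarrow{p})$. Condition (\ref{enu: nonstandard bridge 2-2}) says precisely that $\theta$ holds at every $\overrightarrow{\beta}\in\mu(\V)^{m}$, because the witness $\overrightarrow{\alpha}\in\mu(\U)^{n}$ it supplies already lies in $(\prescript{\ast}{}{A})^{n}$. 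I then need a single $B\in\V$ with $(\prescript{\ast}{}{B})^{m}\subseteq\{\overrightarrow{\beta}\mid\theta(\overrightarrow{\beta})\}$; arguing by contradiction, if every $B\in\V$ met the internal set $\{\neg\theta\}$ inside $(\prescript{\ast}{}{B})^{m}$, then $\{(\prescript{\ast}{}{B})^{m}\cap\{\neg\theta\}\}_{B\in\V}$ would have the finite intersection property and saturation would yield $\overrightarrow{\beta}\in\mu(\V)^{m}$ with $\neg\theta(\overrightarrow{\beta})$, contradicting (\ref{enu: nonstandard bridge 2-2}). Any such $B$ furnishes the inner clause of (\ref{enu: nonstandard bridge 1-2}).

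The routine parts are the internality of $I_{A}$ and of $\{\theta\}$ and the filter-closure computations behind the finite intersection property. The step I expect to be the main obstacle is this converse: it is a reverse-saturation (overspill-type) argument that must upgrade the pointwise-on-$\mu(\V)$ validity of $\theta$ to validity on an entire enlargement level $\prescript{\ast}{}{B}$, and it is exactly here that the saturation hypothesis is indispensable. I would also double-check the cardinality bookkeeping, since both families are indexed by an ultrafilter on $Y$ and may have size up to $|\wp(Y)|$; this suggests the saturation actually required is $|\wp(Y)|^{+}$-saturation, and I would reconcile this with the stated $|Y|^{+}$-hypothesis (either strengthening it or checking that the intended applications only intersect subfamilies of size at most $|Y|$).
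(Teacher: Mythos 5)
Your proposal is correct and is essentially the paper's own proof: the forward direction is verbatim the paper's argument (internal sets $I_{A}$ for $A\in\U$, nonempty by (\ref{enu: nonstandard bridge 1-2}) since $\mu(\V)\subseteq\prescript{\ast}{}{B}$, finite intersection property, saturation, intersection contained in $\mu(\U)^{n}$), and your contradiction argument for the converse is exactly what the paper's one-line appeal to Corollary \ref{cor: nonstandard bridge} with $\prescript{\ast}{}{A}$ as internal parameter unwinds to, since that corollary is itself proved by the same saturation argument applied to the negated formula. Your two side remarks are also both correct: condition (\ref{enu: nonstandard bridge 2-2}) as printed does swap $\U$ and $\V$ (your reading is the one consistent with (\ref{enu: nonstandard bridge 1-2}), with the paper's own proof, and with the finite-embeddability application), and the cardinality bookkeeping is a genuine defect of the paper's statement, since the families $\left\{ I_{A}\right\} _{A\in\U}$ and $\left\{ J_{B}\right\} _{B\in\V}$ can have cardinality $|\wp(Y)|$, so what the argument (and already the nonemptiness of all monads, cf.\ Theorem \ref{thm: bridge map}) actually requires is $|\wp(Y)|^{+}$-saturation rather than the stated $|Y|^{+}$-saturation.
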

\begin{proof}
We will use the notations $\overrightarrow{\alpha}=\left(\alpha_{1},\dots,\alpha_{n}\right),\overrightarrow{\beta}=\left(\beta_{1},\dots,\beta_{m}\right),\overrightarrow{s}=\left(s_{1},\dots,s_{h}\right),\overrightarrow{p}=\left(p_{1},\dots,p_{k}\right)$
throughout the proof.

(\ref{enu: nonstandard bridge 1-2})$\Rightarrow$(\ref{enu: nonstandard bridge 2-2})
Let $\overrightarrow{\beta}\in\mu(\U)^{n}$. As $\mu(\V)\subseteq\prescript{\ast}{}{B}$
for every $B\in\V$, we have that for every $A\in\U$ the set 
\[
I_{A}:=\left\{ \overrightarrow{\alpha}\in\prescript{\ast}{}{A}^{n}\mid\exists s_{1}\in S_{1}\dots\exists s_{h}\in S_{h}\,\prescript{\ast}{}{\phi}\left(\overrightarrow{\alpha},\overrightarrow{\beta},\overrightarrow{s},\overrightarrow{p}\right)\,\text{holds true}\right\} \neq\emptyset.
\]

As $I_{A}$ is internal and $\left\{ I_{A}\right\} _{A\in\U}$ has
the FIP, by saturation $|Y|^{+}$-saturation we have that $\bigcap_{A\in\U}I_{A}\neq\emptyset$,
and we conclude as if $\bigcap_{A\in\U}I_{A}\subseteq\mu(\U)^{n}$.

$(2)\Rightarrow(1)$ Let $A\in\U$. By using $\prescript{\ast}{}{A}$
as a parameter, we see that the thesis is a straightforward consequence
of Corollary \ref{cor: nonstandard bridge}. 
\end{proof}
\begin{example}
Let us consider the finite embeddability. Let $\left(S,\cdot\right)$
be a commutative semigroup with identity and let $\U,\V\in\beta S$.
From Theorem \ref{thm: nonstandard bridge-1} we deduce that, for
every $n\in\N$, the following two conditions are equivalent: 
\begin{itemize}
\item $\U\leq_{fe}\V$; 
\item $\forall\beta_{1}\dots\beta_{n}\in\mu(\U)\exists\sigma\in\prescript{\ast}{}{S}$
such that $\sigma\cdot\beta_{1},\dots,\sigma\cdot\beta_{n}\in\mu(\V)$. 
\end{itemize}
In particular, as $\V\in\beta S$ is such that $\forall\U\in\beta S\,\U\leq_{fe}\V$
if and only if $\V\in\overline{K(\beta S,\odot)}$ (this result has
been proven in \cite[Theorem 4.13]{21}), we obtain the equivalence
between the following two properties: 
\begin{itemize}
\item $\V\in\overline{K(\beta S,\odot)};$ 
\item $\forall n\in\N$, $\forall\beta_{1}\sim_{S}\dots\sim_{S}\beta_{n}\in\prescript{\ast}{}{S}\,\exists\sigma\in\prescript{\ast}{}{S}$
such that $\sigma\cdot\beta_{1},\dots,\sigma\cdot\beta_{n}\in\mu(\V)$. 
\end{itemize}
\end{example}
Finally, as $\V\in\overline{K(\beta S,\odot)}$ if and only if every
set $A\in\V$ is piecewise syndetic in $\left(S,\cdot\right)$ (see
e.g. \cite[Theorem 4.40]{13}), from Theorem \ref{thm: nonstandard bridge}
we obtain the following characterization\footnote{Notice that this characterization resembles that of thick subsets
of $S$: a set $A\subseteq S$ is thick if and only if for every $s_{1},\dots,s_{n}\in S$
there exists $t\in S$ such that $t\cdot s_{1},\dots t\cdot s_{n}\in A$,
i.e. (by transfer) if for every $\beta_{1},\dots,\beta_{n}\in\prescript{\ast}{}{S}$
there exists $\sigma\in\prescript{\ast}{}{S}$ such that $\sigma\cdot\beta_{1},\dots\sigma\cdot\beta_{n}\in\prescript{\ast}{}{A}$.} of piecewise syndetic subsets of $S$: $A\subseteq S$ is piecewise
syndetic if and only if 
\[
\forall n\in\N,\forall\beta_{1}\sim_{S}\dots\sim_{S}\beta_{n}\in\prescript{\ast}{}{S}\,\exists\sigma\in\prescript{\ast}{}{S}\,\text{such that}\,\sigma\cdot\beta_{1},\dots,\sigma\cdot\beta_{n}\in\prescript{\ast}{}{S}.
\]

\begin{example}
Finite embeddabilities can be generalized to arbitrary families of
functions $\mathcal{F}:S^{n}\rightarrow S$ (see \cite{21}). In particular,
let $S=\N$ and $\mathcal{F}:\N\rightarrow\N$ be the family of affinities
\[
\mathcal{F}:=\left\{ f_{a,b}:\N\rightarrow\N\mid\forall n\in\N\,f_{a,b}(n)=an+b\right\} .
\]

We say that a set $A\subseteq\N$ is $\mathcal{F}$-finitely embeddable
in $B\subseteq\N$ (notation: $A\leq_{\mathcal{F}}B$) if for every
finite set $F\subseteq A$ there exists $f\in\mathcal{F}$ such that
$f(A)\subseteq B$. Of course, this notion is related to that of AP-rich
set (namely, of a set that contains arbitrarily long arithmetic progressions):
in fact, it is straightforward to see that $B\subseteq\N$ is AP-rich
if and only if $A\leq_{\mathcal{F}}B$ for every $A\subseteq\N$.
$\mathcal{F}$-finite embeddability can be extended to ultrafilters
as follows: we say that an ultrafilter $\U\in\beta\N$ is $\mathcal{F}$-finitely
embeddable in $\V\in\beta\N$ if for every set $B\in\V$ there exists
$A\in\U$ such that $A\leq_{\mathcal{F}}B$. Again, from Theorem \ref{thm: nonstandard bridge-1}
we deduce that, for every $n\in\N$, the following two conditions
are equivalent: 
\begin{itemize}
\item $\U\leq_{\mathcal{F}}\V$; 
\item $\forall\beta_{1}\dots\beta_{n}\in\mu(\U)\exists\sigma,\rho\in\prescript{\ast}{}{\N}$
such that $\sigma\cdot\beta_{1}+\rho,\dots,\sigma\cdot\beta_{n}+\rho\in\mu(\V)$. 
\end{itemize}
In \cite{21} we proved that $\V\in\beta\N$ is such that $\forall\U\in\beta\N\,\U\leq_{fe}\V$
if and only if every set $A\in\V$ is AP-rich. In particular, Theorem
\ref{thm: nonstandard bridge-1} entails the equivalence between the
following two properties: 
\begin{itemize}
\item every set $A\in\V$ is AP-rich; 
\item $\forall n\in\N$, $\forall\beta_{1}\sim_{S}\dots\sim_{S}\beta_{n}\in\prescript{\ast}{}{\N}\,\exists\sigma,\rho\in\prescript{\ast}{}{\N}$
$\sigma\cdot\beta_{1}+\rho,\dots,\sigma\cdot\beta_{n}+\rho\in\mu(\V)$. 
\end{itemize}
\end{example}
In particular, as the family of AP-rich sets is strongly partition
regular, from Theorem \ref{thm: nonstandard bridge} we obtain the
following characterization of AP-rich sets: $A\subseteq\N$ is AP-rich
if and only if 
\[
\forall n\in\N,\forall\beta_{1}\sim_{S}\dots\sim_{S}\beta_{n}\in\prescript{\ast}{}{\N}\,\exists\sigma,\rho\in\prescript{\ast}{}{\N}\,\text{such that}\,\sigma\cdot\beta_{1}+\rho,\dots,\sigma\cdot\beta_{n}+\rho\in\prescript{\ast}{}{\N}.
\]

\end{document}